    \newtheorem{thm}{Theorem}[section]
    \newtheorem{cor}[thm]{Corollary}
    \newtheorem{prop}[thm]{Proposition}
    \newtheorem{lem}[thm]{Lemma}
    \theoremstyle{definition}
    \newtheorem{defn}[thm]{Definition}
    \newtheorem{exmp}[thm]{Example}
    \theoremstyle{remark}
    \newtheorem{rem}[thm]{Remark}
    \newcommand{\Z}{\mathbb{Z}}
    \newcommand{\C}{\mathbb{C}}
    \newcommand{\cO}{\mathcal{O}}
    \newcommand{\modd}{\mathrm{mod{-}}}
    \newcommand{\coh}{\mathrm{coh}}
    \newcommand{\Proj}{\mathrm{Proj}}
    \newcommand{\Hom}{\mathrm{Hom}}
    \newcommand{\Ext}{\mathrm{Ext}}
    \newcommand*{\rom}[1]{\expandafter\@slowromancap\romannumeral #1@}
    \let\c@equation\c@thm
    \numberwithin{equation}{section}
\begin{document}

    \title{Moduli of Quiver Representations for Exceptional Collections on Surfaces \rom{1}}

   \author{Xuqiang QIN}
\address{Department of Mathematics, Indiana University, 831 E. Third St., Bloomington, IN 47405, USA}
\email{qinx@iu.edu}

\author{Shizhuo ZHANG}
\address{Department of Mathematics, Indiana University, 831 E. Third St., Bloomington, IN 47405, USA}
\email{zhang398@iu.edu}

    \begin{abstract}
    Suppose $S$ is a smooth projective surface over an algebraically closed field $k$, $\mathcal{L}=\{L_1,\ldots,L_n\}$ is a full strong exceptional collection of line bundles on $S$. Let $Q$ be the quiver associated to this collection. One might hope that $S$ is the moduli space of representations of $Q$ with dimension vector $(1,\ldots,1)$ for a suitably chosen stability condition $\theta$: $S\cong M_\theta$. In this paper, we show that this is the case for del Pezzo surfaces. Furthermore, we show the blow-up at a point can be recovered from an augmentation of exceptional collections (in the sense of L. Hille and M.Perling) via morphism between moduli of quiver representations. 
    \end{abstract}
    
    \maketitle
    
    \section{Introduction}
    Let $X$ be a smooth projective variety over an algebraically closed field $\mathbf{k}$ of characteristic $0$. Recall that objects $\mathcal{E}_1,\ldots,\mathcal{E}_n$ in the bounded derived category of coherent sheaves $\mathcal{D}^b(\coh(X))$ forms a full exceptional collection if 
    \begin{enumerate}
          \item $\Hom(\mathcal{E}_i,\mathcal{E}_i[m])=\mathbf{k}$ if $m=0$ and is $0$ otherwise;
          \item $\Hom(\mathcal{E}_i,\mathcal{E}_j[m])=0$ for all $m\in\Z$ if $j<i$;
          \item The smallest triangulated subcategory of $\mathcal{D}^b(\coh(X))$ containing $\mathcal{E}_1\ldots,\mathcal{E}_n$ is itself.
    \end{enumerate}
    
    An exceptional collection is strong if in addition:
    $\Hom(\mathcal{E}_i,\mathcal{E}_j[m])=0$ for all $i,j$ if $m\neq 0$. \
    
   In this paper we are only concerned with the case when $X$ is a smooth projective surface, the objects $\mathcal{E}_i$ are line bundles and the exceptional collection is strong.
   In this situation, we can consider the finite dimensional associative algebra
   \begin{equation*}
          \mathcal{A}=\mathrm{End}(\oplus_{i=1}^n \mathcal{E}_i)
    \end{equation*}
    it is well know that there is an exact equivalence of derived categories
    \begin{align*}
        \mathbf{R}\mathrm{Hom}(\oplus_{i=1}^n \mathcal{E}_i,-):\mathcal{D}^b(\coh(X))\to D^b(\modd \mathcal{A})
    \end{align*}
    whose inverse is given by 
    \begin{align*}
        (-)\otimes^L(\oplus_{i=1}^n \mathcal{E}_i):D^b(\modd \mathcal{A})\to {D}^b(\coh(X))
    \end{align*}
    This gives a non-commutative interpretation of the derived category of $X$. We note that when we input the structure sheaf $\cO_x$ of a close point $x\in X$ into the first functor, we obtain:
    \begin{align*}
        \mathbf{R}\mathrm{Hom}(\oplus_{i=1}^n \mathcal{E}_i,\cO_x)&=\mathrm{Hom}(\oplus_{i=1}^n \mathcal{E}_i,\cO_x)\\
        &=\oplus_{i=1}^n (\mathcal{E}_i^{\vee})_x
    \end{align*}
    Note since $\mathcal{A}^{op}=\mathrm{End}\big(\oplus_{i=1}^n (\mathcal{E}_i^{\vee})\big)$ is a finite dimensional algebra, there exist a bound quiver $(Q,I)$ such that giving an $\mathcal{A}^{op}$-module is equivalent to giving a representation of $(Q,I)$.\\ 
    
    King\cite{King} proved that when restricted by a stability condition $\theta$, the moduli space $M_{\theta}(\alpha)$ of semistable representations of $(Q,I)$ with any dimension vector $\alpha$ is a projective scheme. In our situation, for each point $x\in X$, one can associate the representation $\oplus_{i=1}^n (\mathcal{E}_i^{\vee})_x$ of $\mathcal{A}^{op}$, which has dimension vector $(1,\ldots,1)$. This provides a tautological map \begin{align*}
        T_0:X\to \mathcal{R}ep_{(1,\ldots,1)}(Q)
    \end{align*}  
    to the moduli stack of representation of $(Q,I)$ with dimension vector $(1,\ldots,1)$. Following \cite{BP}, we note $T_0$
    induce a tautological rational map $T:X\dashrightarrow M_\theta$  where $M_\theta$ is the moduli space of semistable representation of $(Q,I)$ with dimension vector $(1,\ldots,1)$. We note $M_\theta$ is a projective scheme and 
    \begin{align*}
        T(x)&=\mathrm{Hom}(\oplus_{i=1}^n \mathcal{E}_i,\cO_x)\\
        &=\oplus_{i=1}^n (\mathcal{E}_i^{\vee})_x
    \end{align*}
    if $x\in X$ is in the domain of $T$. It is natural to wonder when $T$ is a morphism and the relation between $M_{\theta}$ and $X$ for various $\theta$. In particular, one can ask if it is possible to choose $\theta$ so that $T$ is defined everywhere on $X$ and is an isomorphism.\\
    
    To study this question, it is necessary to understand the full strong exceptional collection of line bundles on surfaces. It is conjectured that the derived category of a smooth projective surface admits a full exceptional collection of line bundles if and only if the surface is rational. In the paper \cite{HP}, L.Hille and M.Perling introduced an operation called augmentation on exceptional collection of line bundles which we recall next:
    
    Let $S_0$ be a smooth surface and let $\pi\colon S \to S_0$ be a blowup of a point $p\in S_0$ with the corresponding $(-1)$-curve $E\subset S$. Let 
\begin{equation} \label{below}\{\cO _{S_0}(D_1),...,\cO _{S_0}(D_n)\}
\end{equation}
be a collection of line bundles on $S_0$. For some $1\le i\le n$ consider the 
collection 
\begin{equation}\label{stand-augmen}
\{\cO _S(\pi ^*D_1+E),...,\cO _S(\pi ^*D_{i-1}+E),\cO _S(\pi ^*D_i),\cO_S (\pi^*D_i+E),
\cO_SX(\pi ^*D_{i+1}),...,\cO_S(\pi ^*D_n)\}
\end{equation} 
The collection (\ref{stand-augmen}) 
is called an \emph{augmentation} of the collection
(\ref{below}). If collection (\ref{stand-augmen}) is a full strong exceptional collection on $S$, then 
(\ref{below}) is also such. The converse is not true in general but is often true under some assumptions on collection (\ref{below}), see \cite[Prop. 2.18]{EL}.  A collection is called a \emph{standard augmentation} if it is obtained by a series of augmentations from a collection on $\mathbb{P}^2$ or a Hirzebruch surface. 

The present paper focuses on del Pezzo surfaces. Elagin and Lunts\cite{EL} proved that every full strong exceptional collection of line bundles on a del Pezzo surface is a standard augmentation. 

The main result of this paper is the following theorem:

\begin{thm}[Main Theorem]\label{main}
Let $S$ be a del Pezzo surface of degree $\geq 3$ not isomorphic to $\mathbb{P}^1\times\mathbb{P}^1$. Let $\{\cO_S(D_1),\ldots,\cO_S(D_n)\}$ be any full strong exceptional collection of line bundles on $S$ so that
\begin{equation}\label{technical}
    -K_S-(D_n-D_1)=H-E_1-\ldots-E_k
\end{equation}
where $E_i$ are exceptional divisors and $0\leq k\leq 3$. Let $\mathcal{A}=\mathrm{End}\Big(\oplus_{i=1}^n\mathcal{O}_S(D_i)\Big)$ be the endomorphism algebra. Let $(Q,I)$ be a bound quiver for $\mathcal{A}^{op}$. Then one can choose (many) stability conditions $\theta$ such that the coarse moduli space $M_\theta$ of $\theta$-semistable representations of $(Q,I)$ with dimension vector $(1,\ldots,1)$ is a fine moduli space, and the tautological rational map:
\begin{equation*}
    T:S\dashrightarrow M_\theta
\end{equation*}
is an isomorphism.
\end{thm}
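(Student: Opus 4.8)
The plan has two parts: first establish, by soft arguments, that for a suitable generic $\theta$ the tautological map is a closed immersion $T\colon S\hookrightarrow M_\theta$ whose differential is an isomorphism at every point; then prove surjectivity of $T$ by reducing, via Elagin--Lunts, to an induction on the number of blow-ups with $\mathbb{P}^2$ as base case. Write $\mathcal{E}_i=\cO_S(D_i)$, let $\Phi=\mathbf R\Hom(\oplus_i\mathcal{E}_i,-)\colon\mathcal{D}^b(\coh(S))\to\mathcal{D}^b(\modd\mathcal{A})$ be the derived equivalence and $\Psi$ its quasi-inverse. For a closed point $x$, $\Phi(\cO_x)$ is the representation $T_0(x)$ of dimension vector $(1,\dots,1)$, whose structure maps are the evaluations at $x$ of generators of the spaces $\Hom(\mathcal{E}_i,\mathcal{E}_j)$. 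Using this explicit description together with the global generation of the bundles $\cO_S(D_j-D_i)$ guaranteed by hypothesis (\ref{technical}), I would exhibit an explicit chamber $C$ of stability parameters for which $T_0(x)$ is $\theta$-stable for \emph{every} $x\in S$ and every $\theta\in C$; since $(1,\dots,1)$ is indivisible and $\theta$ is generic, semistable $=$ stable, so by King \cite{King} $M_\theta$ is a fine moduli space and $T_0$ descends to a morphism $T\colon S\to M_\theta$.

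\noindent Next I would show $T$ is a closed immersion with invertible differential. Injectivity on closed points is immediate from the full faithfulness of $\Phi$: an isomorphism $T_0(x)\cong T_0(y)$ forces $\cO_x\cong\cO_y$. For the differential, $\Phi$ identifies $T_xS=\Ext^1_S(\cO_x,\cO_x)$ with $\Ext^1_{\mathcal{A}}(T_0(x),T_0(x))$, which is the tangent space of $M_\theta$ at the stable point $[T_0(x)]$, and $dT_x$ is precisely this identification; in particular $dT$ is everywhere an isomorphism of $2$-dimensional spaces, so $M_\theta$ is smooth of dimension $2$ along $T(S)$ and $T$ is unramified. Being proper, injective on closed points and unramified, $T$ is a closed immersion onto a smooth closed surface $T(S)\subseteq M_\theta$.

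\noindent It remains to prove $T$ is surjective, equivalently that every $\theta$-stable $V$ of dimension vector $(1,\dots,1)$ satisfies $\Psi(V)\cong\cO_x$. From the dimension vector one computes that the Chern character of $\Psi(V)$ equals $[\mathrm{pt}]$, so it suffices to show $\Psi(V)$ is a sheaf concentrated in degree $0$: then rank $0$, $c_1=0$ and $\chi=1$ give $\Psi(V)=\cO_x$, and $V\cong\Phi(\cO_x)=T_0(x)$. By Elagin--Lunts \cite{EL} the collection is a standard augmentation; I would first check that, under hypothesis (\ref{technical}) and $S\not\cong\mathbb{P}^1\times\mathbb{P}^1$, one may take its root to be $\mathbb{P}^2$, so that the problem becomes an induction on the number $m$ of blow-ups in $S=\mathrm{Bl}_m\mathbb{P}^2$. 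The base case $S=\mathbb{P}^2$ is classical: $(Q,I)$ is a twist of the Beilinson quiver and $M_\theta\cong\mathbb{P}^2$ by a direct computation with dimension vector $(1,1,1)$. For the inductive step, let $\pi\colon S\to S_0$ be the blow-up at the point $p$ occurring in the last augmentation, $E$ its $(-1)$-curve, and $(Q,I)$, $(Q_0,I_0)$ the associated bound quivers, so $Q$ has exactly one more vertex than $Q_0$. Restriction of representations along the inclusion $Q_0\hookrightarrow Q$ (deletion of the new vertex), together with a compatible choice of stability parameter $\theta_0$, yields a morphism $\rho\colon M_\theta(S)\to M_{\theta_0}(S_0)\cong S_0$. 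I would then verify: (i) under $T$, the map $\rho$ restricts to $\pi$ on $S=T(S)$; (ii) $\rho$ is proper, with fibre over $p$ a $\mathbb{P}^1$ and all other fibres single points; and (iii) $M_\theta(S)$ is a smooth surface, the locus away from $T(S)$ being handled by an explicit local analysis of the quiver moduli near the exceptional locus. Then $\rho$ is the blow-down of $S_0$ at $p$, so $M_\theta(S)\cong\mathrm{Bl}_pS_0=S$, and since $T(S)$ is a closed surface inside it, $T$ is an isomorphism.

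\noindent The main obstacle is Step~3: describing precisely how a Hille--Perling augmentation transforms $(Q,I)$ and the stability chamber, constructing the morphism $\rho$ on moduli, and proving it is exactly the blow-down --- equivalently, using (\ref{technical}) to bound the cohomological amplitude of $\Psi(V)$ so that every stable $V$ is a point sheaf. Bounding $\dim M_\theta(S)$ by the expected value $2$ (i.e.\ showing the relations $I$ cut out $M_\theta(S)$ in the expected codimension), keeping the chambers $C$ compatible along the induction, and checking smoothness of $M_\theta(S)$ over the exceptional locus are the remaining delicate points; the bound $k\le 3$ in (\ref{technical}) is what makes the $\Ext$-vanishings among the $\mathcal{E}_i$ needed for these steps hold.
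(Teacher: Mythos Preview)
Your overall architecture---induction on blow-ups starting from $\mathbb{P}^2$, a morphism between moduli spaces that realizes the blow-down, then concluding $T$ is an isomorphism---is exactly the paper's strategy. But two of your concrete claims are wrong and would derail the argument as written.

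First, hypothesis (\ref{technical}) does \emph{not} guarantee that the bundles $\cO_S(D_j-D_i)$ are globally generated. Already on $\mathrm{Bl}_p\mathbb{P}^2$ with the collection $\{\cO,\cO(H-E),\cO(H),\cO(2H-E)\}$ one has $D_3-D_2=E$, and $\cO(E)$ has base locus $E$. So your proposed method for exhibiting a chamber where $T_0(x)$ is $\theta$-stable for \emph{every} $x$ does not work; for $x$ on an exceptional curve, many arrows of the representation $T_0(x)$ vanish, and stability is genuinely delicate. The paper does not verify this directly: it shows $T$ is defined on $S\setminus E$ (Corollary~\ref{isom on open}), and only at the very end deduces that $T$ extends across $E$ via the universal property of the blow-up. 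Your front-loaded closed-immersion argument via $\Phi$ is elegant and would indeed simplify matters \emph{if} you had stability everywhere, but you do not get that for free.

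Second, your map $\rho$ is not ``restriction along an inclusion $Q_0\hookrightarrow Q$ (deletion of the new vertex)''. There is no such inclusion of quivers: the arrows in $Q_0$ from $i<k$ to $j>k$ correspond to sections of $B_i+\cdots+B_{j-1}$, whereas in $Q$ the arrows from $i$ to $j$ correspond to sections of $B_i+\cdots+B_{j-1}-E$, a strictly smaller space (Lemma~\ref{cross arrow}). The paper constructs $F\colon\mathrm{Rep}(Q)\to\mathrm{Rep}(Q_0)$ by an explicit ring homomorphism (Theorem~\ref{map}) that sends certain arrows of $Q_0$ to \emph{products} of arrows in $Q$; checking this is well-defined on the ideals of relations is nontrivial. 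Your description of $\rho$ is too naive to carry the induction.

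Finally, you correctly flag Step~3 as the main obstacle, but the paper's execution shows why: proving $\rho^{-1}(p)\cong\mathbb{P}^1$ (Lemma~\ref{key}) and smoothness of $M_\theta$ along it (Proposition~\ref{smooth}) require an exhaustive case-by-case analysis over all admissible toric systems, running several pages of tables. There is no soft argument here; the proof genuinely depends on the combinatorics of the augmentation and uses the bound $k\le3$ in (\ref{technical}) to keep the case list finite.
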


\begin{rem}
   The techniques to prove this result apply to more general rational surfaces (weak del Pezzo surfaces in particular), we only prove the del Pezzo cases for the sake of presentation. The curious readers are referred to the extended version of the present paper\cite{QZ}. The techniques also apply to some varieties in higher dimension, see \cite{Q}.
\end{rem}

    The proof of the main theorem uses induction on standard augmentations. Let $S_0$ be a surface and $\mathcal{L}_0$ be a full strong exceptional collection on $S_0$ satisfying the assumptions in the Main Theorem. Assume $S$ is the blow up of $S_0$ at a point and $\mathcal{L}$ is a full strong exceptional collection obtained by standard augmentation on $\mathcal{L}_0$ and satisfy (\ref{technical}). Under the induction hypothesis that there exists $\theta_0$ so that $T_0: S_0 \to M_{\theta_0}$ is an isomorphism, we first show how to find a suitable stability condition $\theta$ for the bound quiver of $\mathcal{L}$. Second, we construct a morphism $f:M_\theta\to M_{\theta_0}$ between moduli spaces with our choice of stability condition. Then we show that the induced morphism between moduli spaces is actually the blow up at a point and the diagram
     \[ \begin{tikzcd}
      S \arrow[r,"T"] \arrow{d}{\pi} & M_\theta \arrow{d}{f} \\%
      S_0 \arrow{r}{T_0}& M_{\theta_0}
      \end{tikzcd}
      \]
     is commutative. As a result, we have
     \begin{thm}
        With the above notation, one can always construct a morphism:
        \begin{align*}
            f:M_\theta\to M_{\theta_0}
        \end{align*}
        so that it realizes the blowing up  $\pi:S\to S_0$ as a morphism between moduli spaces of stable quiver representations.
   \end{thm}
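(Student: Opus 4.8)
The plan is to realise $\mathbf{R}\pi_*$ at the level of quiver representations, read off $f$ from the universal family on $M_\theta$, and then recognise the result geometrically as a blow-up.

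\emph{Step 1: a functor modelling $\mathbf{R}\pi_*$.} Let $\mathcal{G}$ be the direct sum of the line bundles in the augmented collection on $S$, so that $\mathcal{A}=\mathrm{End}(\mathcal{G})$, and let $\mathcal{G}_0=\bigoplus_j\mathcal{O}_{S_0}(D_j)$, so that $\mathcal{A}_0=\mathrm{End}(\mathcal{G}_0)$. From $\mathbf{R}\pi_*\mathcal{O}_S(E)=\mathcal{O}_{S_0}$ and the projection formula one gets $\mathbf{R}\pi_*\mathcal{O}_S(\pi^*D+E)=\mathcal{O}_{S_0}(D)=\mathbf{R}\pi_*\mathcal{O}_S(\pi^*D)$; hence $\mathbf{R}\pi_*\mathcal{G}=\mathcal{G}_0\oplus\mathcal{O}_{S_0}(D_i)$ is a sheaf --- the tilting bundle of $S_0$ with one summand repeated --- whose endomorphism algebra $\mathcal{A}_0^{\flat}$ is Morita equivalent to $\mathcal{A}_0$. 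Transporting $\mathbf{R}\pi_*$ across the two tilting equivalences and composing with this Morita equivalence yields a triangulated functor $\Phi\colon\mathcal{D}^b(\modd\mathcal{A})\to\mathcal{D}^b(\modd\mathcal{A}_0)$ with the property that $\Phi\bigl(\mathbf{R}\mathrm{Hom}(\mathcal{G},\mathcal{O}_x)\bigr)=\mathbf{R}\mathrm{Hom}(\mathcal{G}_0,\mathcal{O}_{\pi(x)})$ for \emph{every} $x\in S$ (including $x\in E$), because $\mathbf{R}\pi_*\mathcal{O}_x=\mathcal{O}_{\pi(x)}$. Combinatorially, $\Phi$ deletes the $E$-twists from the vertices of $Q$ and merges the two vertices lying over $\mathcal{O}_{S_0}(D_i)$, in the spirit of the Hille--Perling description of augmentation.

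\emph{Step 2: from $\Phi$ to the morphism $f$.} With the stability condition $\theta$ built in the first part of the proof from $\theta_0$ and hypothesis (\ref{technical}), I would establish the key lemma: for every $\theta$-semistable representation $M$ of $(Q,I)$ of dimension vector $(1,\dots,1)$, the object $\Phi(M)$ is concentrated in degree $0$, has dimension vector $(1,\dots,1)$, and is $\theta_0$-semistable. Concentration in degree $0$ and the dimension count follow from $\mathbf{R}\pi_*$ having cohomological amplitude $[0,1]$ together with an Euler-characteristic computation using that $\mathbf{R}\pi_*\mathcal{O}_S(\pi^*D_j)$ and $\mathbf{R}\pi_*\mathcal{O}_S(\pi^*D_j+E)$ are line bundles; $\theta_0$-semistability is forced by the chosen compatibility of $\theta$ with $\theta_0$ (a destabilising sub- or quotient representation of $\Phi(M)$ tracing back to a destabilising one of $M$). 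Running $\Phi$ on the universal representation over the fine moduli space $M_\theta$ then gives a flat family of $\theta_0$-semistable $(Q_0,I_0)$-representations of dimension vector $(1,\dots,1)$ over $M_\theta$, which by the universal property of the fine moduli space $M_{\theta_0}$ is classified by a morphism $f\colon M_\theta\to M_{\theta_0}$; Step 1 gives $f\circ T=T_0\circ\pi$ wherever $T$ is defined, so the square in the statement commutes.

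\emph{Step 3: $f$ is the blow-up.} For generic $\theta$ and dimension vector $(1,\dots,1)$, semistable equals stable and stable representations have scalar automorphisms only, so $M_\theta$ is a smooth, projective, irreducible surface; moreover the chosen $\theta$ makes each $\mathbf{R}\mathrm{Hom}(\mathcal{G},\mathcal{O}_x)$ stable, so $T\colon S\to M_\theta$ is an everywhere-defined injective morphism. A stability argument shows that over $M_{\theta_0}\setminus\{T_0(p)\}\cong S_0\setminus\{p\}$ every point of $M_\theta$ is of the form $\mathbf{R}\mathrm{Hom}(\mathcal{G},\mathcal{O}_x)$, so $f$ is an isomorphism there; hence the generic fibre of $f$ is a point, $\dim M_\theta=2$, and $T(S\setminus E)$ is a dense open subset of $M_\theta$ on which $T$ is an isomorphism. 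Being an injective morphism from an irreducible proper surface onto the irreducible surface $M_\theta$, $T$ is then an isomorphism $S\xrightarrow{\sim}M_\theta$ (bijective, proper, birational, with smooth --- hence normal --- target). Consequently $f=T_0\circ\pi\circ T^{-1}$ is, up to the isomorphisms $T$ and $T_0$, exactly $\pi$, i.e. the blow-up of $M_{\theta_0}\cong S_0$ at the point $T_0(p)$. This proves the theorem and completes the inductive step of the Main Theorem.

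\emph{The main obstacle} is the key lemma of Step 2 --- controlling $\mathbf{R}\pi_*$ on \emph{all} $\theta$-semistable representations of dimension vector $(1,\dots,1)$, not merely on those induced by points of $S$ --- together with the closely related claim used in Step 3 that over the complement of $T_0(p)$ every stable representation is induced by a point. This is precisely where the careful choice of $\theta$ and the numerical hypothesis (\ref{technical}) are needed, and it is the technical heart of the argument; by comparison, the flatness of $\Phi$ on the universal family and the final blow-up identification are routine.
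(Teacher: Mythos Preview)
Your approach is genuinely different from the paper's --- categorical rather than combinatorial --- but Step 3 contains a real gap that the functorial framework does not fill.

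The paper constructs $f$ explicitly: an arrow-by-arrow morphism $F\colon\mathrm{Rep}(Q)\to\mathrm{Rep}(Q_0)$ of representation schemes (Theorem~\ref{map}), checked to respect the $\mathrm{PGL}(\mathbf{1})$-action and to send $\theta$-stable to $\theta_0$-stable (Propositions~\ref{descent}--\ref{compatible}, with the structural Proposition~\ref{structure} proved case-by-case), then descended to $f$. Your combinatorial description of $\Phi$ (``merge the two vertices over $\mathcal{O}_{S_0}(D_i)$'') is exactly this $F$, so on the level of constructing $f$ the two routes converge; your derived-categorical packaging is cleaner conceptually, though you still owe the verification that $\Phi$ lands in degree~$0$ on \emph{all} $\theta$-semistables, which in the paper is precisely the combinatorial Proposition~\ref{structure} you flag as the obstacle.

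The gap is your claim that ``stable representations have scalar automorphisms only, so $M_\theta$ is a smooth, projective, irreducible surface.'' Scalar automorphisms give only that $M_\theta$ is a \emph{fine} moduli space. For a bound quiver $(Q,I)$ the representation scheme can be singular and reducible, and nothing in your argument prevents $M_\theta$ from having an extra component, or a higher-dimensional piece, sitting entirely inside the fibre $f^{-1}(T_0(p))$. If that happens, $T(S\setminus E)$ is not dense, $T$ is not surjective, and the Zariski Main Theorem step collapses. The paper confronts exactly this: Section~7 proves $C=f^{-1}(T_0(P))\cong\mathbb{P}^1$ by an exhaustive case analysis over all augmented toric systems satisfying~(\ref{technical}) (Lemma~\ref{key}), and then computes $\dim\Ext^1_A(R,R)=2$ for every $R\in C$ (Proposition~\ref{smooth}) to get smoothness along $C$. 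Both steps are long and combinatorial; your outline gives no mechanism to replace them. Until you control the fibre over $T_0(p)$ and establish smoothness there, the identification of $f$ with the blow-up does not close.
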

  \begin{rem}
    This result shows that we can recover the blow up morphism at a point from augmentations of exceptional collections associated to it.
  \end{rem}
  
  By also considering the case $\mathbb{P}^1\times\mathbb{P}^1$ separately, we obtain:
   \begin{cor}\label{del pezzo}
    Every smooth del Pezzo surface $S$ of degree $\geq 3$ is a moduli space of stable representations (with respect to a suitable stability condition $\theta$) for a bound quiver $(Q,I)$ of a full strong exceptional collection of line bundles on $S$. Indeed, the tautological rational map:
    \begin{align*}
        T:S\dashrightarrow M_{\theta}
    \end{align*}
    is an isomorphism. \
\end{cor}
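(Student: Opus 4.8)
The plan is to deduce Corollary~\ref{del pezzo} from the Main Theorem together with the Hirzebruch-surface base cases and a separate argument for $\mathbb{P}^1\times\mathbb{P}^1$, so the proof is essentially a bookkeeping argument that fits all the pieces together. First I would recall from \cite{EL} that every full strong exceptional collection of line bundles on a del Pezzo surface is a standard augmentation, i.e.\ obtained by a chain of augmentations starting from a collection on $\mathbb{P}^2$ or a Hirzebruch surface $\mathbb{F}_e$. The Main Theorem already handles all del Pezzo surfaces of degree $\geq 3$ that are \emph{not} $\mathbb{P}^1\times\mathbb{P}^1$, \emph{provided} one can exhibit at least one full strong exceptional collection satisfying the technical condition (\ref{technical}); so the first real task is to produce, for each such $S$, a specific collection with $-K_S-(D_n-D_1)=H-E_1-\cdots-E_k$ with $0\le k\le 3$. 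For $S=\mathbb{P}^2$ the standard collection $\{\cO,\cO(1),\cO(2)\}$ has $D_n-D_1=2H$ and $-K_S=3H$, giving $-K_S-(D_n-D_1)=H$, so $k=0$ works. For the blow-ups $S=\mathrm{Bl}_k\mathbb{P}^2$ with $1\le k\le 6$ (these are exactly the del Pezzo surfaces of degree $\ge 3$ other than $\mathbb{P}^1\times\mathbb{P}^1$), one chooses a standard augmentation of the $\mathbb{P}^2$ collection and checks directly that the difference of the outer divisors can be arranged to be of the prescribed form; this is the content of the inductive construction in the proof of the Main Theorem, so I would simply invoke it.

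Second, I would handle $S=\mathbb{P}^1\times\mathbb{P}^1$, which is excluded from the Main Theorem. Here I would work with an explicit full strong exceptional collection, e.g.\ $\{\cO,\cO(1,0),\cO(0,1),\cO(1,1)\}$, whose endomorphism algebra is the path algebra of the Beilinson-type quiver with relations (the quiver for $\mathbb{P}^1\times\mathbb{P}^1$), and directly analyze the moduli space $M_\theta$ of representations of dimension vector $(1,1,1,1)$ for a suitable generic $\theta$. One verifies that for such $\theta$ there are no strictly semistable points, so $M_\theta$ is a fine moduli space, and that $T_0$ extends to an isomorphism $\mathbb{P}^1\times\mathbb{P}^1\xrightarrow{\sim}M_\theta$; this is a standard King-type computation and appears in the literature (cf.\ \cite{BP, King}), essentially because $\mathbb{P}^1\times\mathbb{P}^1$ is a toric variety and the quiver is its Beilinson quiver. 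Alternatively one may use the description of $\mathbb{P}^1\times\mathbb{P}^1$ as a GIT quotient matching King's construction.

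Finally, combining the two cases: every del Pezzo surface $S$ of degree $\ge 3$ is either $\mathbb{P}^1\times\mathbb{P}^1$ (handled above) or a blow-up of $\mathbb{P}^2$ at $\le 6$ general points, for which we have exhibited a full strong exceptional collection of line bundles satisfying (\ref{technical}); the Main Theorem then gives a stability condition $\theta$ such that $M_\theta$ is a fine moduli space and $T\colon S\dashrightarrow M_\theta$ is an isomorphism. This proves the corollary. I expect the only real subtlety to be the explicit verification that, for each degree and each choice of blown-up configuration, one can actually find a collection meeting the numerical constraint $0\le k\le 3$ on the number of exceptional divisors in $-K_S-(D_n-D_1)$; since higher $k$ is not covered, one must make sure such a collection exists for \emph{every} del Pezzo surface in the range, but this follows from the construction used in the induction step of the Main Theorem, so no new argument is needed here.
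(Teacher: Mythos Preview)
Your proposal is correct and follows essentially the same approach as the paper: reduce to the Main Theorem for del Pezzo surfaces other than $\mathbb{P}^1\times\mathbb{P}^1$ by exhibiting, for each degree, a full strong exceptional collection satisfying condition~(\ref{technical}), and handle $\mathbb{P}^1\times\mathbb{P}^1$ separately via King's result. The only presentational difference is that the paper supplies an explicit table of such toric systems (one for each degree $3\le d\le 8$), whereas you appeal more abstractly to the inductive augmentation construction; either is fine, but the explicit list is cleaner since it removes any doubt that the constraint $0\le k\le 3$ can actually be met at every step.
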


\begin{rem}
In \cite{King1}, A.King showed that every Hirzebruch surface is a moduli space of stable representation of quiver for full strong exceptional collection of line bundles on it.
\end{rem}

    There are many related results in the direction of realizing varieties as moduli spaces. Bergman-Proudfoot\cite{BP} proved that if a variety $X$ admits a full strong exceptional collection and the character is great (a base-point-freeness condition), then $X$ is a connected component of $M_\theta$. Craw-Smith\cite{CS} showed that if $X$ is a projective toric variety, then one can choose a collection of globally generated line bundles $\mathcal{L}_1, \dots,\mathcal{L}_n$ and a canonical choice of weight $\theta$, so that the moduli space $M_\theta$ is isomorphic to the variety $X$. Craw-Winn\cite{CW} extended this result to all Mori Dream spaces. Our results are different from theirs in the sense that our collection of line bundles is not required to be globally generated and our method applies to many rational surfaces with $K_X^2<0$ although we only treat del Pezzo surfaces in this paper. Furthermore, We actually construct a birational morphism between moduli space of representations of quiver coming from different strong exceptional collections of line bundles and show it is a blow up map. It is interesting to note that  Y.Toda proved a similar statement in \cite[Corollary 1.4]{To}. But there, he realized any smooth surface as moduli space of Bridgeland stable objects in derived category instead of quiver moduli. The behaviour of the space of Bridgeland stability conditions induced by the augmentations of exceptional collection of line bundles will be addressed in our future work.

    This paper is organized as follows: In the next section we recall some definitions and preliminary results that will be useful for later sections. In Section 3, we prove a few results on how standard augmentation affects the quiver of sections. In Section 4, we discuss the case of $S=\mathbb{P}^2$, which is the base case of induction. The last three sections are devoted to the proof of the main theorem.
    \subsection*{Notations and Conventions}
\begin{itemize}
    \item If $\vec{v}$ is a vector of size $n$, then we use $v_i$ to denote the $i$-th entry of the vector for $1\leq i\leq n$.
          
    \item If $\mathcal{L}$ is a line bundle on a scheme $S$ and $s\in H^0(X,\mathcal{L})$ is a section, we use $div(s)$ to denote the divisor of zeros of $s$.
\end{itemize}
    
\section{Preliminaries}
    
\subsection{Surfaces and divisors}
By a surface we mean a nonsingular projective integral separated scheme of dimension $2$ over $\C$. A surface is rational if it is birational to the projective plane. Let $S$ be a rational smooth projective surface over an algebraically closed field $k$ of characteristic zero.  Let $K_S$ be the canonical divisor on $S$.

Let $d=K_S^2$ be the degree of $S$, further we always assume that $d>0$. 
The Picard group $Pic(S)$ of $S$ is a finitely generated abelian group of rank $10-d$.
It is equipped with the intersection form $(D_1,D_2)\mapsto D_1\cdot D_2$ which has signature $(1,9-d)$.
For a divisor $D$ on $S$, we will use the following shorthand notations:
$$H^i(D):=H^i(S,\cO_S(D)),\quad h^i(D)=\dim H^i(D),\quad \chi(D)=h^0(D)-h^1(D)+h^2(D).$$
By the Riemann-Roch formula, one has
$$\chi(D)=1+\frac{D\cdot (D-K_S)}2.$$
 
The following notions are introduced by Hille and Perling in \cite[Definition 3.1]{HP}. 
\begin{defn}
A divisor $D$ on $S$ is \emph{numerically left-orthogonal} if $\chi(-D)=0$ (or equivalently $D^2+D\cdot K_S=-2$).\ 

A divisor $D$ on $S$ is \emph{left-orthogonal} (or briefly \emph{lo}) if $h^i(-D)=0$ for all $i$.\ 

A divisor $D$ on $S$ is \emph{strong left-orthogonal} (or briefly \emph{slo}) if $h^i(-D)=0$ for all $i$ and $h^i(D)=0$ for $i\ne 0$.
\end{defn}

The following propositions are easy but useful consequences of Riemann-Roch formula, see~\cite[Lemma 3.3]{HP} or \cite[Lemma 2.10, Lemma 2.11]{EL}.
\begin{prop}
Let $D$ be a numerically left-orthogonal divisor on $S$. Then 
$$\chi(D)=D^2+2=-D\cdot K_S.$$
\end{prop}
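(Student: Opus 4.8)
The plan is to derive both equalities purely as bookkeeping consequences of the Riemann--Roch formula $\chi(D)=1+\frac{D\cdot(D-K_S)}{2}$ recorded above, together with the numerical hypothesis. First I would unwind the assumption $\chi(-D)=0$: applying Riemann--Roch to the divisor $-D$ gives
$$\chi(-D)=1+\frac{(-D)\cdot(-D-K_S)}{2}=1+\frac{D^2+D\cdot K_S}{2},$$
so $\chi(-D)=0$ is literally the equivalent condition $D^2+D\cdot K_S=-2$ already noted in the definition of numerical left-orthogonality, i.e. $D\cdot K_S=-2-D^2$.

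For the equality $D^2+2=-D\cdot K_S$ I would then simply substitute this relation, getting $-D\cdot K_S=-(-2-D^2)=D^2+2$. For the equality $\chi(D)=D^2+2$ I would apply Riemann--Roch to $D$ itself, $\chi(D)=1+\frac{D^2-D\cdot K_S}{2}$, and replace $D\cdot K_S$ by $-2-D^2$, obtaining $\chi(D)=1+\frac{D^2+2+D^2}{2}=1+D^2+1=D^2+2$. Combining the two computations then also yields $\chi(D)=-D\cdot K_S$, which completes the proof.

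There is essentially no obstacle here: the argument is a one-line manipulation of the intersection pairing, and the only point requiring care is the sign convention when expanding $(-D)\cdot(-D-K_S)$ and when reintroducing $D\cdot K_S$. Alternatively one could just cite \cite[Lemma 3.3]{HP} or \cite[Lemma 2.10, Lemma 2.11]{EL}, as the text already indicates.
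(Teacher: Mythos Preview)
Your proof is correct and is exactly the straightforward Riemann--Roch computation the paper has in mind; in fact the paper does not give its own proof but merely remarks that the proposition is an ``easy but useful consequence of Riemann-Roch'' and cites \cite[Lemma 3.3]{HP} and \cite[Lemma 2.10, 2.11]{EL}. Your write-up simply makes that one-line computation explicit.
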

\begin{prop}
\label{hom}
Suppose $D_1,D_2$ are numerically left-orthogonal divisors on $S$. 
Then $D_1+D_2$ is numerically left-orthogonal if and only if $D_1D_2=1$. If that is the case, then 
$$\chi(D_1+D_2)=\chi(D_1)+\chi(D_2)\qquad \text{and}\qquad (D_1+D_2)^2=D_1^2+D_2^2+2.$$
\end{prop}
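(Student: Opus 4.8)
The plan is to reduce everything to the numerical identity characterizing left-orthogonality, namely that a divisor $D$ on $S$ is numerically left-orthogonal precisely when $D^2 + D\cdot K_S = -2$, and then to exploit bilinearity of the intersection pairing. First I would record this identity for the two given divisors: $D_i^2 + D_i\cdot K_S = -2$ for $i = 1,2$. Note that since the hypothesis is purely numerical, no cohomological vanishing is needed at any stage; only the Riemann--Roch shadow of it, which is what makes the whole argument a combinatorial manipulation of intersection numbers.

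Next I would expand the corresponding quantity for the sum. By bilinearity of the intersection form,
\[
(D_1+D_2)^2 + (D_1+D_2)\cdot K_S = \big(D_1^2 + D_1\cdot K_S\big) + \big(D_2^2 + D_2\cdot K_S\big) + 2\,D_1\cdot D_2 = -4 + 2\,D_1\cdot D_2 .
\]
Hence $D_1 + D_2$ is numerically left-orthogonal, i.e.\ the left-hand side equals $-2$, if and only if $-4 + 2\,D_1\cdot D_2 = -2$, that is, if and only if $D_1\cdot D_2 = 1$. This settles the equivalence.

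For the two displayed formulas, now assuming $D_1\cdot D_2 = 1$ (equivalently, by the first part, that $D_1+D_2$ is numerically left-orthogonal), I would invoke the preceding proposition, which gives $\chi(D) = D^2 + 2$ for any numerically left-orthogonal $D$. Expanding $(D_1+D_2)^2 = D_1^2 + D_2^2 + 2\,D_1\cdot D_2 = D_1^2 + D_2^2 + 2$ gives the second formula directly, and applying $\chi(D)=D^2+2$ to each of $D_1$, $D_2$ and $D_1+D_2$ then yields $\chi(D_1+D_2) = (D_1^2+2) + (D_2^2+2) = \chi(D_1)+\chi(D_2)$.

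There is really no hard step here: the whole statement is a one-line consequence of bilinearity of the intersection form together with Riemann--Roch. The only point requiring a little care is to carry out the expansions consistently and to keep track of the fact that $\chi(D) = D^2 + 2$ becomes available for the two summands and for their sum simultaneously, once numerical left-orthogonality has been established for all three.
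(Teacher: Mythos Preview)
Your argument is correct; the paper itself does not supply a proof but simply cites \cite[Lemma 3.3]{HP} and \cite[Lemma 2.10, 2.11]{EL}, and the computation you give is exactly the intended one: expand $(D_1+D_2)^2+(D_1+D_2)\cdot K_S$ bilinearly, use the identity $D_i^2+D_i\cdot K_S=-2$, and then apply the preceding proposition $\chi(D)=D^2+2$ to all three divisors.
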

    
\begin{defn}
    A surface $S$ is del Pezzo if $-K_S$ is ample.
\end{defn}
Del Pezzo surfaces are classified up by their degree $d=K_S^2$. If $d=8$, then the surface is isomorphic to $\mathbb{P}^1\times\mathbb{P}^1$ or the blow up of $\mathbb{P}^2$ at a point. Otherwise, the surface is obtained by blowing up $(9-d)$ points at $\mathbb{P}^2$ with no three collinear, no six on a conic, and no eight of them on a cubic having a node at one of them. We call these points in general position. 

Let $p:S_0\to \mathbb{P}^2$ be a surface obtained from blowing up $n (n\leq 6)$ points on projective plane in general position. Let $\pi:S\to S_0$ be a surface obtained from blowing up one more point $P$ on the plane so that all $n+1$ points are in general position. Let $H$ be the class of hyperplane on $S$ or $S_0$. We make some obvious observations:
    \begin{lem}\label{bpf1}
      The linear system $|H|$ and $|H-E_1|$ on $S_0$ are both base point free for any exceptional divisor $E_1$ on $S_0$. In particular
        \begin{align*}
            h^0(S,H-E)=h^0(S,H)-1=2
        \end{align*}
        \begin{align*}
            h^0(S,(H-E-E_1)=h^0(S,H-E_1)-1=1
        \end{align*}
    \end{lem}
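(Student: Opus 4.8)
The plan is to deduce everything from the basic fact that on the blow-up of $\mathbb{P}^2$ at points in general position, the hyperplane class $H$ (pulled back from $\mathbb{P}^2$) is globally generated, and from the standard exact sequence for a blow-up. First I would recall that $H$ on $S_0$ is the pullback $p^*\cO_{\mathbb{P}^2}(1)$, so $H^0(S_0,H)\cong H^0(\mathbb{P}^2,\cO(1))$ is $3$-dimensional and $|H|$ is base point free because it is pulled back from the base-point-free system $|\cO_{\mathbb{P}^2}(1)|$; moreover, by Riemann--Roch on a del Pezzo (or weak del Pezzo) surface together with Kawamata--Viehweg vanishing, $h^1(S_0,H)=h^2(S_0,H)=0$, so $h^0(S_0,H)=\chi(H)=3$.

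Next I would handle $|H-E_1|$. Since the points blown up are in general position, the linear system $|H-E_1|$ is the pullback of the pencil of lines in $\mathbb{P}^2$ through the image point of $E_1$; such a pencil is base point free on $\mathbb{P}^2$ away from that point, and after blowing up the point its strict transforms separate, so $|H-E_1|$ is base point free on $S_0$. Concretely, I would use the ideal-sheaf sequence
\begin{equation*}
0\to \cO_{S_0}(H-E_1)\to \cO_{S_0}(H)\to \cO_{E_1}(H)\to 0,
\end{equation*}
noting $\cO_{E_1}(H)\cong\cO_{E_1}$ since $H\cdot E_1=0$, so $H^0(\cO_{E_1}(H))=k$ and $H^1(\cO_{E_1}(H))=0$. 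Taking cohomology and using $h^1(S_0,H)=0$ gives $h^0(S_0,H-E_1)=h^0(S_0,H)-1=2$ and $h^1(S_0,H-E_1)=0$. The same computation applied one step further, using
\begin{equation*}
0\to \cO_{S_0}(H-E-E_1)\to \cO_{S_0}(H-E_1)\to \cO_{E}(H-E_1)\to 0
\end{equation*}
with $E\cdot(H-E_1)=0$ (here $E$ is a distinct exceptional curve, so $E\cdot E_1=0$ and $E\cdot H=0$), yields $h^0(S_0,H-E-E_1)=h^0(S_0,H-E_1)-1=1$, which is exactly the displayed formula in the statement (reading the $S$ there as $S_0$, or passing to $S$ via $\pi^*$).

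The only mild subtlety — and the one place a reader might want care — is the general-position hypothesis: it is what guarantees that $H-E_1$ (and $H-E-E_1$) is actually \emph{effective} and that the restriction maps above are surjective, equivalently that no unexpected base curve appears; this is where one invokes that the blown-up points impose independent conditions on lines, which is automatic for $\le 6$ general points. Given that, the argument is a routine chase through the two short exact sequences, so there is no serious obstacle; the main thing to get right is bookkeeping of intersection numbers $H\cdot E_i = 0$ and $E_i\cdot E_j=-\delta_{ij}$.
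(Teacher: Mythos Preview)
The paper does not actually prove this lemma; it is introduced with the phrase ``We make some obvious observations'' and left without proof. Your argument via the ideal-sheaf sequences
\[
0\to \cO(H-E_1)\to \cO(H)\to \cO_{E_1}\to 0
\]
and its analogue with $E$ is the standard way to justify these ``obvious'' facts, and it is correct. One small point of bookkeeping: in the paper's setup $E_1$ lives on $S_0$ while $E$ is the exceptional divisor of $\pi\colon S\to S_0$, so the second displayed formula is genuinely on $S$; the cleanest reading of the ``In particular'' clause is that base-point-freeness of $|H|$ (resp.\ $|H-E_1|$) on $S_0$ at the point $P$ being blown up is exactly the surjectivity of $H^0(S_0,H)\to H^0(\cO_P)$ (resp.\ $H^0(S_0,H-E_1)\to H^0(\cO_P)$), whose kernel computes $h^0(S,H-E)$ (resp.\ $h^0(S,H-E-E_1)$). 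Your exact-sequence argument, run on $S$ rather than $S_0$, gives the same conclusion, so the slight ambiguity you flag is harmless.
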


     \begin{lem}\label{surj23}
       Let $F$ be an exceptional divisor, then the natural map
       \begin{align*}
        H^0(S,H)\otimes H^0(S,H-F)\to H^0(S,2H-F)
    \end{align*}
    is surjective.
     \end{lem}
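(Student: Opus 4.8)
The plan is to reduce the surjectivity statement to the two base-point-freeness facts recorded in Lemma \ref{bpf1} together with a dimension count. First I would observe that on $S$ we have $h^0(H)=3$, $h^0(H-F)=2$ by Lemma \ref{bpf1}, and that $h^0(2H-F)$ can be computed by Riemann--Roch: since $2H-F$ is the pullback of a conic minus the exceptional divisor, $\chi(2H-F)=1+\tfrac{(2H-F)\cdot(2H-F-K_S)}{2}$, and one checks $h^1=h^2=0$ (for instance because $2H-F$ is globally generated, or by Kodaira vanishing as $2H-F-K_S$ is big and nef), giving $h^0(2H-F)=5$. So the claim is that a map from a $6$-dimensional space to a $5$-dimensional space is surjective, i.e. the image has dimension $5$; equivalently the multiplication map has a $1$-dimensional kernel and no more.

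The cleanest route is geometric. Pick coordinates: $H^0(S,H)$ is spanned by $x_0,x_1,x_2$, the pullbacks of the coordinates on $\mathbb{P}^2$, where we may assume the blown-up point $P$ (the center of $F$, viewing $F$ as the exceptional divisor over $P$, or the relevant exceptional curve) corresponds to $x_0=x_1=0$; then $H^0(S,H-F)$ is spanned by $x_0,x_1$. The image of the multiplication map is the span of $\{x_0^2, x_0x_1, x_1^2, x_0x_2, x_1x_2\}$ inside $H^0(S,2H-F)$. These five quadrics are visibly linearly independent (they are linearly independent already as sections of $2H$ on $\mathbb{P}^2$, hence remain so after pulling back and restricting to the subspace vanishing appropriately along $F$), and since $h^0(2H-F)=5$ they must span. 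Hence the map is surjective. If one prefers to avoid explicit coordinates, the same conclusion follows from the standard fact that if $L$ is base-point-free and $M$ is any line bundle with $H^1(M-L)=0$, then $H^0(L)\otimes H^0(M)\to H^0(L+M)$ is surjective — apply this with $L=H$ (base-point-free by Lemma \ref{bpf1}) and $M=H-F$, using that $H^1(S,M-L)=H^1(S,-F)=0$ since $\mathcal{O}_S(-F)$ has no cohomology in degree $1$ (it fits in $0\to\mathcal{O}_S(-F)\to\mathcal{O}_S\to\mathcal{O}_F\to 0$ and $H^0(\mathcal{O}_S)\to H^0(\mathcal{O}_F)$ is an isomorphism while $H^1(\mathcal{O}_S)=0$).

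I expect the only mild subtlety to be the vanishing inputs: one must make sure that $H^1(S,-F)=0$ (equivalently that $F$ is rigid with $H^0(\mathcal{O}_F)=k$, which holds since $F$ is a smooth rational $(-1)$-curve, or more generally an exceptional curve) and that $h^1(2H-F)=h^2(2H-F)=0$ so that the target really has dimension $5$. Both are routine on a (weak) del Pezzo surface — the first from the exact sequence above, the second from $2H-F$ being nef and big together with Kodaira--Ramanujam vanishing, or simply from global generation of $2H-F$. Given these, the argument is the short dimension count above, and there is no real obstacle.
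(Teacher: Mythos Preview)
Your proposal is correct and your primary argument is essentially identical to the paper's: choose a basis $f,g$ for $H^0(S,H-F)$, extend to a basis $f,g,h$ of $H^0(S,H)$, observe that the image is spanned by the five monomials $f^2,fg,g^2,fh,gh$, and conclude by the dimension count $h^0(2H-F)=5$. The paper simply asserts $h^0(2H-F)=5$ without the vanishing justification you supply, and does not give your alternative argument via $H^1(S,-F)=0$; that second route is a nice bonus but not needed here.
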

    \begin{proof}
      Note $h^0(S,H-F)=2$. Pick two linear functions $f,g$ such that 
      $$H^0(S,H-F)=span  (f,g)$$
      We can find another linear function $h$ so that 
      $$H^0(S,H)=span(f,g,h)$$
      then $H^0(S,H)\otimes H^0(S,H-F)=span(f^2,fg,g^2,fh,gh)$ which is a vector space of dimension five. Since $h^0(2H-F)=5$, we see the natural inclusion is surjective.
    \end{proof}
    \begin{lem}\label{surj22}
    Let $E_1$,$E_2$ be two exceptional divisor on $S$, then the natural map
    \begin{align*}
        H^0(S,H-E_1)\otimes H^0(S,H-E_2)\to H^0(S,2H-E_1-E_2)
    \end{align*}
    is surjective.
    \end{lem}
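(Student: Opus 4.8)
The plan is to reduce the statement to a short computation with linear and quadratic forms on $\mathbb{P}^2$, in the same spirit as the proof of Lemma~\ref{surj23}. Let $q\colon S\to\mathbb{P}^2$ be the composite blow-down. Pushing forward, $q_*\cO_S(H-E_i)=\mathcal I_{p_i}(1)$ and $q_*\cO_S(2H-E_1-E_2)=\mathcal I_{\{p_1,p_2\}}(2)$, so $H^0(S,H-E_i)$ is identified with the pencil of lines through $p_i$ (of dimension $2$, cf.\ Lemma~\ref{bpf1}) and, when $p_1\ne p_2$, $H^0(S,2H-E_1-E_2)$ is identified with the space of conics through $p_1$ and $p_2$. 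Two distinct points of $\mathbb{P}^2$ always impose independent conditions on conics, so $h^0(S,2H-E_1-E_2)=6-2=4$; moreover under these identifications the map in the statement is literally multiplication of forms. I would begin by recording these facts.

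Assume first $E_1\ne E_2$, equivalently $p_1\ne p_2$. The unique line $\overline{p_1p_2}$ gives a linear form $g$ lying in both pencils $H^0(S,H-E_1)$ and $H^0(S,H-E_2)$. Pick $f\in H^0(S,H-E_1)$ not proportional to $g$ (a line through $p_1$ missing $p_2$) and $h\in H^0(S,H-E_2)$ not proportional to $g$ (a line through $p_2$ missing $p_1$). Evaluating a relation $af+bg+ch=0$ at $p_1$ forces $c=0$ and at $p_2$ forces $a=0$, whence also $b=0$; thus $f,g,h$ are linearly independent and may be chosen as homogeneous coordinates on $\mathbb{P}^2$. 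The image of $H^0(S,H-E_1)\otimes H^0(S,H-E_2)$ is then $\mathrm{span}(fg,\,fh,\,g^2,\,gh)$, the span of four distinct monomials, hence $4$-dimensional; comparing with $h^0(S,2H-E_1-E_2)=4$ shows the map is surjective.

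For the degenerate case $E_1=E_2$ the argument is even shorter: both factors equal a single pencil $\mathrm{span}(f,g)$, the image of the multiplication map is $\mathrm{span}(f^2,fg,g^2)$, while $H^0(S,2H-2E_1)$ is the space of conics singular at $p_1$, i.e.\ $\mathrm{Sym}^2$ of that pencil, again of dimension $3$; so the map is onto. I do not anticipate any genuine obstacle: the proof is a direct analogue of Lemma~\ref{surj23}, and the only points requiring (routine) care are the identification of the relevant spaces of sections with ideal-sheaf twists on $\mathbb{P}^2$ and the linear independence of $f,g,h$.
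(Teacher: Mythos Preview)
Your proof is correct and follows essentially the same approach as the paper: both arguments single out the unique line through $p_1$ and $p_2$ as a common member of the two pencils, complete each pencil to a basis, note that the three linear forms are independent, and then count that the four resulting products span the $4$-dimensional target $H^0(S,2H-E_1-E_2)$. Your write-up is a bit more explicit about the independence of $f,g,h$ and also handles the degenerate case $E_1=E_2$, which the paper's proof implicitly excludes (it invokes $h^0(S,H-E_1-E_2)=1$ from Lemma~\ref{bpf1}).
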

    \begin{proof}
      By Corollary \ref{bpf1}, $h^0(S,H-E_1-E_2)=1$, let the corresponding linear function be $f$. Then we can choose linear function $g,h$ such that $$H^0(S,H-E_1)=span  (f,g)$$$$H^0(S,H-E_2)=span(f,h)$$
      By our assumption, $g,h$ are linearly independent, then the image of $H^0(S,H-E_1)\otimes H^0(S,H-E_2)$ contains $f^2,fg,fh,gh$, which span a 4 dimensional vector space. Since $h^0(S,2H-E_1-E_2)=4$, we see the natural map is surjective.
    \end{proof}

\subsection{Exceptional Toric systems and standard augmentations}
We recall the notion of a \emph{toric system}, introduced by Hille and Perling in \cite{HP}.
For a sequence $\{\cO_S(D_1),\ldots,\cO_S(D_n)\}$ of line bundles one can consider 
the infinite sequence (called a \emph{helix}) $\{\cO_S(D_i)\}, i\in \Z$, defined by the rule
$D_{k+n}=D_k-K_S$. From Serre duality it follows that the collection $\{\cO_S(D_1),\ldots,\cO_S(D_n)\}$ is exceptional (resp. numerically exceptional) if and only if any collection of the form $\{\cO_S(D_{k+1}),\ldots,\cO_S(D_{k+n})\}$ is exceptional (resp. numerically exceptional). One can consider the $n$-periodic sequence $A_k=D_{k+1}-D_k$ of divisors on $S$. Following Hille and Perling, we will consider the finite sequence $\{A_1,\ldots,A_n\}$ with the cyclic order and will treat the index $k$ in $A_k$ as a residue modulo $n$.  Vice versa, for any sequence $\{A_1,\ldots,A_n\}$ one can construct the infinite sequence $\{\cO_S(D_i)\}, i\in \Z$, with the property $D_{k+1}-D_k=A_{k\mod n}$.

\begin{defn}[See {\cite[Definitions 3.4 and 2.6]{HP}}]
\label{def_ts}
A sequence $\{A_1,\ldots,A_n\}$ in $Pic(S)$ is called a toric system if $n=\text{rank } K_0(S)$ and the following conditions are satisfied (where indexes are treated modulo $n$):
\begin{itemize}
\item $A_i\cdot A_{i+1}=1$;
\item $A_i\cdot A_{j}=0$ if $j\ne i,i\pm 1$;
\item $A_1+\ldots+A_n=-K_S$.
\end{itemize}
\end{defn}

Note that a cyclic shift $\{A_k,A_{k+1},\ldots,A_n,A_1,\ldots,A_{k-1}\}$ of a toric system $\{A_1,\ldots,A_n\}$  is also a toric system. Also, note that by our definition any toric system has maximal length.

\begin{exmp}
$\{H,H,H\}$ is a toric system on $\mathbb{P}^2$. $\{H-E_1,E_1,H-E_1\}$ is a toric system on $\mathbb{F}_1\simeq Bl_p\mathbb{P}^2$. 
\end{exmp}

\begin{defn}
Given an exceptional collection of line bundles $\{\mathcal{O}_S(D_1),\ldots,\mathcal{O}_S(D_n)\}$, the corresponding toric system is defined to be a system of divisors $\{A_1,\ldots,A_n\}$ where
    \begin{align*}
        A_i=
        \begin{cases}
        D_{i+1}-D_i &\text{if $i=1,2,\ldots,n-1$}\\
        -K_S-(D_n-D_1) &\text{if $i=n$.}
        \end{cases}
    \end{align*}
\end{defn}

For the future use we make the following remark, see the proof of Proposition 2.7 in~\cite{HP}.
\begin{rem}
\label{remark_genpic}
For any toric system $\{A_1,\ldots,A_n\}$ the elements $A_1,\ldots,A_n$ generate $Pic(S)$ as abelian group.
\end{rem}

A toric system $\{A_1,\ldots,A_n\}$ is called \emph{exceptional} (resp. \emph{strong exceptional}) if the corresponding  collection $\{\cO_S(D_1),\ldots,\cO_S(D_n)\}$ is  exceptional (resp. strong exceptional). 


A toric system $\{A_1,\ldots,A_n\}$ is called \emph{cyclic strong exceptional} if the collection 
$$\{\cO_S(D_{k+1}),\ldots,\cO_S(D_{k+n})\}$$ 
is strong exceptional for any $k\in \Z$. Equivalently: if all cyclic shifts $\{A_k,A_{k+1},\ldots,A_n,A_1,\ldots,A_{k-1}\}$
are strong exceptional.

Notation: for a toric system $\{A_1,\ldots,A_n\}$ denote 
$$A_{k,k+1,\ldots,l}=A_k+A_{k+1}+\ldots+A_l.$$
We allow $k>l$ and treat 
$[k,k+1,\ldots,l]\subset [1,\ldots, n]$ as a cyclic segment.
Note that $A_{k\ldots l}$ is a numerically left-orthogonal divisor with 
\begin{equation}
\label{eq_Akl2}
A_{k\ldots l}^2+2=\sum_{i=k}^l(A_i^2+2).
\end{equation}

\begin{rem}
If for a toric system $A$ one has $A_i^2\ge -2$ for all $i$, then one has $A_{k,\ldots,l}^2\ge -2$ for any cyclic segment $[k,k+1,\ldots,l]\subset [1,\ldots, n]$. 
\end{rem}

\begin{rem}
If $A_{k\ldots l}$ is a strong left-orthogonal divisor, then $h^0(A_{k\ldots l})=A_{k\ldots l}^2+2=\sum_{i=k}^l(A_i^2+2)=\sum_{i=k}^lA_i^2+2(l-k+1)$.
\end{rem}

The following theorem is proved in \cite{EXZ}
\begin{thm}
\label{theorem_checkonlyminustwo}
Let $A=\{A_1,\ldots,A_n\}$ be a toric system on a del Pezzo surface $S$. Suppose $A^2_i\ge -2$ for  all $i$
Then $A$ is cyclic strong exceptional toric system.
\end{thm}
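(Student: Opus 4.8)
We plan to deduce the theorem from a cohomology-vanishing property of the partial sums $A_{k\ldots l}$, which we establish by induction on the length of the segment.

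\emph{Reduction to strong left-orthogonality of all partial sums.} For line bundles one has $\Ext^m(\cO_S(D_a),\cO_S(D_b))=H^m(D_b-D_a)$; combining this with Serre duality and the helix relation $D_{k+n}=D_k-K_S$, one sees that $A$ is cyclic strong exceptional if and only if for every proper cyclic segment $[k,k+1,\ldots,l]\subsetneq[1,\ldots,n]$ the divisor $D:=A_{k\ldots l}$ is strong left-orthogonal, i.e. $h^i(-D)=0$ for all $i$ and $h^i(D)=0$ for $i\neq 0$. Indeed, as $k$ ranges over $\Z$, the classes $D_b-D_a$ occurring in all cyclic shifts of the collection are exactly $\pm A_{k\ldots l}$ over all proper cyclic segments; the ``strong'' condition forces $h^{>0}(A_{k\ldots l})=0$ and the ``exceptional'' condition forces $h^\bullet(-A_{k\ldots l})=0$.

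\emph{The easy vanishings.} Fix a proper cyclic segment and set $D=A_{k\ldots l}$. By the text around \eqref{eq_Akl2} the divisor $D$ is numerically left-orthogonal, so $\chi(-D)=0$, $\chi(D)=D^2+2=D\cdot(-K_S)$, and in particular $D\neq 0$; and by the first remark after \eqref{eq_Akl2} the hypothesis $A_i^2\ge -2$ gives $D^2\ge -2$. Since $-K_S$ is ample we have $(K_S-D)\cdot(-K_S)\le -K_S^2<0$, and $(K_S+D)\cdot(-K_S)=-K_S^2+D\cdot(-K_S)$; a short case analysis (the only subtle point being $D^2=-2$, where $D\cdot K_S=0$ forces the complementary segment to consist entirely of classes of square $-2$) shows that $K_S\pm D$ is never effective, so Serre duality gives $h^2(D)=h^2(-D)=0$. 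Moreover $(-D)\cdot(-K_S)=D\cdot K_S\le 0$ and $-D\neq 0$, so $-D$ is not effective; hence $h^0(-D)=0$, and then $\chi(-D)=0$ gives $h^1(-D)=0$. Thus the theorem is reduced to showing
\[
h^1(A_{k\ldots l})=0 \qquad\text{for every proper cyclic segment }[k,k+1,\ldots,l].
\]

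\emph{Induction and a gluing lemma.} We prove this by induction on the length $\ell=l-k+1$, the crucial input being the following \textbf{gluing lemma}: if $D_1,D_2$ are numerically left-orthogonal, strong left-orthogonal divisors with $D_1^2,D_2^2\ge -2$ and $D_1\cdot D_2=1$, then $D_1+D_2$ is strong left-orthogonal. Granting this, write $A_{k\ldots l}=D_1+D_2$ with $D_1=A_{k\ldots l-1}$ and $D_2=A_l$: both are numerically left-orthogonal with squares $\ge -2$, they satisfy $D_1\cdot D_2=1$ by Proposition~\ref{hom} (since $A_{k\ldots l}$ is numerically left-orthogonal), and they are strong left-orthogonal --- $D_1$ by the induction hypothesis and $D_2=A_l$ by the base case $\ell=1$ --- so the lemma applies. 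For the lemma itself, $D_1+D_2$ is numerically left-orthogonal with $(D_1+D_2)^2=D_1^2+D_2^2+2\ge -2$ by Proposition~\ref{hom}, so by the previous step we only need $h^1(D_1+D_2)=0$. If $D_1^2=D_2^2=-2$ then $(D_1+D_2)^2=-2$ and the borderline case above applies. Otherwise, say $D_1^2\ge -1$, so $h^0(D_1)=\chi(D_1)=D_1^2+2\ge 1$; choose $0\neq s\in H^0(D_1)$, let $C=\mathrm{div}(s)$, and note $p_a(C)=1+\tfrac12(D_1^2+D_1\cdot K_S)=0$ since $D_1$ is numerically left-orthogonal. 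From
\[
0\longrightarrow \cO_S(D_2)\xrightarrow{\ \cdot s\ }\cO_S(D_1+D_2)\longrightarrow \cO_C(D_1+D_2)\longrightarrow 0
\]
and $H^1(D_2)=0$ one gets that $H^1(D_1+D_2)$ embeds into $H^1(\cO_C(D_1+D_2))$; since $\deg_C\cO_C(D_1+D_2)=(D_1+D_2)\cdot D_1=D_1^2+1\ge 0$ and $C$ has arithmetic genus $0$, the expected vanishing $H^1(\cO_C(D_1+D_2))=0$ yields $h^1(D_1+D_2)=0$. When $C$ can be taken irreducible it is a smooth rational curve and this is immediate; the multiplication-surjectivity statements Lemma~\ref{surj23} and Lemma~\ref{surj22} (and their analogues) handle the residual bookkeeping, in particular the base case $\ell=1$ over blow-ups of $\mathbb{P}^2$.

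\emph{The main obstacle.} The heart of the argument is the vanishing $H^1(\cO_C(D_1+D_2))=0$ together with the base case $\ell=1$. A genus-$0$ curve $C$ is, when connected, a tree of $\mathbb{P}^1$'s, and $H^1$ of a line bundle on it vanishes only if the bundle has degree $\ge -1$ on every subcurve; this can fail for a general numerically left-orthogonal $D_1$ --- for instance $D=4H+2E_1$ on the degree-$7$ del Pezzo surface is numerically left-orthogonal with $D^2=12$ yet $h^1(D)=1$, because $D\cdot E_1=-2$ --- so the proof must exploit the full hypothesis that $D_1$ is a proper segment of a toric system all of whose terms have square $\ge -2$. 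Concretely, one must show that this hypothesis forces $|D_1|$ to contain an irreducible (hence smooth rational) member, or at least forces $\cO_C(D_1+D_2)$ to have nonnegative degree on every component of $C$; and, for the base case, that each $A_i$ with $A_i^2\ge -1$ is itself strong left-orthogonal. Both rely essentially on the geometry of del Pezzo surfaces --- that $-K_S$ is ample, that there are only finitely many $(-1)$-curves, and that there are no $(-2)$-curves --- together with the classification of the few numerically left-orthogonal classes of small self-intersection that can occur in such a toric system. This is where the real work lies.
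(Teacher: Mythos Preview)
The paper does not prove this theorem: it is quoted from \cite{EXZ} and no argument is given here. So there is no ``paper's own proof'' to compare against, only an external reference.

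Your outline has the right architecture --- reduce to strong left-orthogonality of all proper cyclic segments, dispose of $h^{\bullet}(-D)$ and $h^2(D)$ using ampleness of $-K_S$ and $\chi(-D)=0$, then attack $h^1(D)$ --- and your observation that the case $D^2=-2$ is already finished by the ``easy vanishings'' (since $D\cdot(-K_S)=0$ forces $h^0(D)=0$ as well) is correct and worth keeping. But the proposal is, as you yourself concede in the final paragraph, not a proof: the gluing lemma is asserted, not established. The step ``$H^1(\cO_C(D_1+D_2))=0$ because $p_a(C)=0$ and the total degree is $\ge 0$'' is exactly where the argument can break, and your own example $4H+2E_1$ shows why total degree is not enough on a reducible genus-zero curve. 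Nothing in your write-up explains how the hypothesis ``$D_1$ is a partial sum of a toric system with all $A_i^2\ge -2$'' forces $|D_1|$ to contain a member on which the restriction has nonnegative degree on every component; you only say that this must be shown.

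One specific misstep: the appeal to Lemma~\ref{surj23} and Lemma~\ref{surj22} is a red herring. Those lemmas assert surjectivity of multiplication maps on global sections of particular line bundles; they have nothing to do with $H^1$-vanishing on curves or with the base case $\ell=1$ of your induction. For the base case you need to prove directly that each single $A_i$ with $A_i^2\ge -1$ is slo, and that requires identifying which divisor classes can occur as $A_i$ on a del Pezzo surface and checking each --- information not contained in those lemmas. So as it stands you have a correct reduction and a plausible inductive scheme, but the two load-bearing steps (the base case and the control of $H^1$ on the divisor $C$) are missing, and the lemmas you point to do not supply them.
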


\subsubsection{Augmentations}
Following Hille and Perling \cite{HP}, we define augmentations. They provide a wide class of explicitly constructed toric systems.

Let $A'=\{A'_1,\ldots,A'_n\}$ be a toric system on a surface $S'$, and let $p\colon S\to S'$ be the blow up of a point with the exceptional divisor $E\subset S$. Denote $A_i=p^*A'_i$. Then one has the following toric systems on $S$:
\begin{align}
{\rm augm}_{p,1}(A')=&\{E,A_1-E,A_2,\ldots, A_{n-1},A_n-E\};\\
{\rm augm}_{p,m}(A')=&\{A_1,\ldots,A_{m-2}, A_{m-1}-E,E,A_{m}-E,A_{m+1},\ldots,A_n\}\quad\text{for}\quad 2\le m\le n;\\
\label{eq_augmentation3}
{\rm augm}_{p,n+1}(A')=&\{A_1-E,A_2,\ldots, A_{n-1},A_n-E,E\}.
\end{align}
Toric systems ${\rm augm}_{p,m}(A')$ ($1\le m\le n+1$) are called   \emph{elementary augmentations} of toric system  $\{A'_1,\ldots,A'_n\}$.

\begin{prop}[{See \cite[Proposition 3.3]{EL}}]
\label{prop_elemaugm}
In the above notation, let $A$ be a toric system on $S$ such that $A_m=E$ for some $m$. Then $A={\rm augm}_{p,m}(A')$ for some toric system $A'$ on $S'$.
\end{prop}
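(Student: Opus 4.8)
The plan is to run the construction preceding this proposition in reverse: starting from a toric system $A$ on $S$ with $A_m=E$, one ``contracts'' the copy of $E$ occupying position $m$ to recover the toric system $A'$ on $S'$. Since all cyclic shifts of $A$ are again toric systems, and since a cyclic shift of $A$ moving $E$ to the first slot turns ${\rm augm}_{p,m}(A')$ into ${\rm augm}_{p,1}$ of a cyclic shift of $A'$, I would first reduce to the case $m=1$; thus $A=\{A_1,\dots,A_{n+1}\}$ with $A_1=E$, where $n+1=\mathrm{rank}\,K_0(S)$ and $n\ge 3$ because $S'$ is a surface. The only input needed is the geometry of the blow-up $p\colon S\to S'$: $\mathrm{Pic}(S)=p^*\mathrm{Pic}(S')\oplus\Z E$, with $p^*D\cdot p^*D'=D\cdot D'$, $p^*D\cdot E=0$, $E^2=-1$, and $K_S=p^*K_{S'}+E$.

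Next I would extract the $S'$-component of each $A_i$. Writing $A_i=p^*B_i+c_iE$ with $B_i\in\mathrm{Pic}(S')$, one has $c_i=-A_i\cdot E=-A_i\cdot A_1$, so the toric system axioms for $A$ (Definition~\ref{def_ts}) force $c_2=c_{n+1}=-1$ and $c_i=0$ for $3\le i\le n$, while $A_1=E$ gives $B_1=0$; hence $A_2=p^*B_2-E$, $A_{n+1}=p^*B_{n+1}-E$, and $A_i=p^*B_i$ for $3\le i\le n$. I would then simply \emph{define} $A'_j:=B_{j+1}$ for $1\le j\le n$. With this definition the defining formula for ${\rm augm}_{p,1}$ displayed above reads verbatim $A={\rm augm}_{p,1}(A')$, so the whole remaining task is to verify that $\{A'_1,\dots,A'_n\}$ is a toric system on $S'$.

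Its length is $n=\mathrm{rank}\,K_0(S')$, as required. For the anticanonical identity I would expand $-K_S=\sum_{i=1}^{n+1}A_i=E+p^*\!\big(\textstyle\sum_{i=2}^{n+1}B_i\big)-2E$, compare with $-K_S=-p^*K_{S'}-E$, and use injectivity of $p^*$ to get $\sum_{j=1}^{n}A'_j=\sum_{i=2}^{n+1}B_i=-K_{S'}$. For the intersection numbers, write $A_{j+1}=p^*B_{j+1}-\varepsilon_jE$ with $\varepsilon_j\in\{0,1\}$ and $\varepsilon_j=1$ exactly for $j\in\{1,n\}$; pairing $p^*B_{j+1}$ with $E$ gives $A_{j+1}\cdot E=\varepsilon_j$, and a one-line expansion then yields the clean identity $A'_j\cdot A'_k=A_{j+1}\cdot A_{k+1}+\varepsilon_j\varepsilon_k$. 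Since the index shift $j\mapsto j+1$ carries the cyclic adjacency structure of $\{1,\dots,n\}$ onto that of $\{2,\dots,n+1\}\subset\Z/(n+1)$, the term $A_{j+1}\cdot A_{k+1}$ already equals the correct value ($1$ for adjacent, $0$ otherwise) for every pair except the wrap-around pair $\{1,n\}$; there $A_{j+1}\cdot A_{k+1}=A_2\cdot A_{n+1}=0$ while the missing $1$ is supplied by $\varepsilon_1\varepsilon_n=1$. This single point — the identity $E^2=-1$ converting the non-adjacency $A_2\cdot A_{n+1}=0$ into the required adjacency $A'_1\cdot A'_n=1$ — together with the cyclic bookkeeping and the harmless boundary cases $m=1,n+1$ (absorbed by the initial shift), is the only place the argument does anything non-formal; everything else is bilinear algebra in $\mathrm{Pic}(S)$, and I do not anticipate a genuine obstacle.
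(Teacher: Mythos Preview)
Your argument is correct and is precisely the natural one: decompose each $A_i$ in $\mathrm{Pic}(S)=p^*\mathrm{Pic}(S')\oplus\Z E$, read off the $E$-coefficients from the intersection numbers $A_i\cdot A_1=A_i\cdot E$, and then verify directly that the $S'$-components form a toric system. The key computation $A'_j\cdot A'_k=A_{j+1}\cdot A_{k+1}+\varepsilon_j\varepsilon_k$ and the observation that the ``lost'' adjacency between positions $2$ and $n+1$ is restored by $E^2=-1$ are exactly right.

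Note, however, that the paper does not actually supply a proof of this proposition: it is stated with a citation to \cite[Proposition 3.3]{EL} and used as a black box. So there is nothing to compare your argument against within this paper. Your write-up is essentially the standard proof one would expect to find in the cited reference, and it stands on its own.
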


\begin{defn}
\label{def_augm1}
A toric system $A$ on $S$ is called a \emph{standard augmentation} if $S$ is a Hirzebruch surface or $A$ is an elementary augmentation of some standard augmentation. Equivalently: $A$ is a standard augmentation if there exists a chain of blow-ups
$$S=S_n\xrightarrow{p_n} S_{n-1}\to \ldots S_1\xrightarrow{p_1} S_0$$
where $S_0$ is a Hirzebruch surface and 
$$A={\rm augm}_{p_n,k_n}({\rm augm}_{p_{n-1},k_{n-1}}(\ldots {\rm augm}_{p_1,k_1}(A')\ldots ))$$
for some $k_1,\ldots,k_n$ and a toric system $A'$ on $S_0$. In this case we will say that $A$ is a \emph{standard augmentation along the chain $p_1,\ldots,p_n$}.
\end{defn}

\begin{rem}
To be more accurate, one  should add that (the unique) toric system $\{H,H,H\}$ on $\mathbb{P}^2$ is also considered as a standard augmentation.
\end{rem}

\begin{prop}\cite[Proposition 2.21]{EL}

\label{prop_augmexc}
Let $A={\rm augm}_k(A')$. Then
\begin{enumerate}
\item [(1)]$A$ is exceptional if and only if $A'$ is exceptional;
\item [(2)]if $A$ is strong exceptional then $A'$ is strong exceptional;
\item [(3)]if $A$ is cyclic strong exceptional then $A'$ is cyclic strong exceptional.
\end{enumerate}
\end{prop}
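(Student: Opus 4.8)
The plan is to translate every $\Ext$-group at stake into a cohomology group on $S'$ and then run an index-bookkeeping argument. Write $p\colon S\to S'$ for the blow-up, $E\subset S$ for the exceptional divisor, and $q\in S'$ for the blown-up point. After twisting the whole collection by a line bundle (a harmless normalization $D'_1=0$) and, in the case $k=n+1$, applying the helix shift recalled above --- which identifies the augmented collection with a helix shift of the one for $k=1$, hence preserves the relevant property for (1) and (3), while for (2) it is replaced by the analogous direct computation --- I would take $A=\mathrm{augm}_{p,m}(A')$ with $1\le m\le n$, whose associated line bundles on $S$ are
\[
\mathcal{E}_j=\begin{cases}
\mathcal{O}_S(p^*D'_j+E), & 1\le j\le m-1,\\
\mathcal{O}_S(p^*D'_m), & j=m,\\
\mathcal{O}_S(p^*D'_m+E), & j=m+1,\\
\mathcal{O}_S(p^*D'_{j-1}), & m+2\le j\le n+1,
\end{cases}
\]
while $A'$ corresponds to $\mathcal{E}'_i=\mathcal{O}_{S'}(D'_i)$ with $D'_{i+1}-D'_i=A'_i$. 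That the augmented sequence is again a toric system --- so all numerical (Euler-form) conditions hold automatically, and fullness follows from Orlov's blow-up semiorthogonal decomposition together with mutations --- is part of \cite{HP}; hence in each of (1)--(3) only the vanishing of $\Ext$-groups among the $\mathcal{E}_j$ needs to be matched against that among the $\mathcal{E}'_i$.

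The computational heart is the identification, for a divisor $C$ on $S'$,
\[
\mathbf{R}p_*\mathcal{O}_S(p^*C)\ \cong\ \mathbf{R}p_*\mathcal{O}_S(p^*C+E)\ \cong\ \mathcal{O}_{S'}(C),\qquad \mathbf{R}p_*\mathcal{O}_S(p^*C-E)\ \cong\ \mathcal{I}_q(C),
\]
where $\mathcal{I}_q\subset\mathcal{O}_{S'}$ is the ideal sheaf of $q$; these follow from $\mathbf{R}p_*\mathcal{O}_S=\mathbf{R}p_*\mathcal{O}_S(E)=\mathcal{O}_{S'}$ and $\mathbf{R}p_*\mathcal{O}_S(-E)=\mathcal{I}_q$ --- read off from the structure sequences of $E$ and $E^2=-1$ --- together with the projection formula. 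Consequently $\Ext^\bullet(\mathcal{E}_a,\mathcal{E}_b)=H^\bullet(S,\mathcal{E}_b\otimes\mathcal{E}_a^{-1})$ is always one of $H^\bullet(S',\mathcal{O}_{S'}(D'_{\bar b}-D'_{\bar a}))$ or $H^\bullet(S',\mathcal{I}_q(D'_{\bar b}-D'_{\bar a}))$, where $\bar j=j$ for $j\le m$ and $\bar j=j-1$ for $j\ge m+1$, the ideal-sheaf alternative occurring exactly when $\mathcal{E}_a$ carries the $+E$ twist and $\mathcal{E}_b$ does not. Finally, from $0\to\mathcal{I}_q(C)\to\mathcal{O}_{S'}(C)\to k(q)\to 0$ and $H^{>0}(S',\mathcal{O}_{S'})=0$ one records that $H^\bullet(S',\mathcal{I}_q)=0$ and that $H^{>0}(S',\mathcal{I}_q(C))=0$ implies $H^{>0}(S',\mathcal{O}_{S'}(C))=0$ --- an implication valid in only one direction.

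Granting this dictionary the three statements become index bookkeeping. For (1): in the semiorthogonality check ($a>b$) the ideal-sheaf alternative occurs only for $(a,b)=(m+1,m)$, where $\mathcal{E}_b\otimes\mathcal{E}_a^{-1}=\mathcal{O}_S(-E)$ is acyclic, while every $H^\bullet(S',\mathcal{O}_{S'}(D'_\beta-D'_\alpha))$ with $\alpha>\beta$ is realized as some $\Ext^\bullet(\mathcal{E}_a,\mathcal{E}_b)$ with $a>b$ through the line-bundle alternative; so $A$ is semiorthogonal iff $A'$ is, which with fullness gives (1) (the diagonal $\Ext$'s are automatic since $H^{>0}(\mathcal{O}_S)=0$). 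For (2): if $A$ is strong exceptional then (1) makes $A'$ exceptional, so it remains to show $H^{>0}(S',\mathcal{O}_{S'}(D'_\beta-D'_\alpha))=0$ for $\alpha\le\beta$; choosing positions with original indices $\alpha,\beta$ and equal $E$-twists reduces this directly to $\Ext^{>0}(\mathcal{E}_a,\mathcal{E}_b)=0$, except in the case $\alpha<m<\beta$ where one is forced into the ideal-sheaf alternative, $\Ext^{>0}(\mathcal{E}_\alpha,\mathcal{E}_{\beta+1})=H^{>0}(S',\mathcal{I}_q(D'_\beta-D'_\alpha))=0$, from which $H^{>0}(S',\mathcal{O}_{S'}(D'_\beta-D'_\alpha))=0$ follows by the one-sided implication. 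For (3): augmentation commutes with cyclic shifts up to reindexing, so every cyclic shift of $A$ is again an augmentation of a cyclic shift of $A'$; applying (2) to each, if $A$ is cyclic strong exceptional then every cyclic shift of $A'$ is strong exceptional, i.e.\ $A'$ is cyclic strong exceptional.

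The main obstacle --- really the only non-formal point --- is the asymmetry of the third pushforward isomorphism: twisting down by $E$ produces the ideal sheaf $\mathcal{I}_q(C)$ rather than a line bundle, and cohomology vanishing can be transported only from $\mathcal{I}_q(C)$ to $\mathcal{O}_{S'}(C)$. The whole argument comes down to arranging the index bookkeeping so that this ideal-sheaf alternative is always affordable: it is harmless in the semiorthogonality check of (1), and the one-directional transport is precisely what the one-directional statements (2) and (3) require --- which is also the structural reason their converses fail in general.
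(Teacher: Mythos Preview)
The paper itself does not prove this proposition; it merely quotes \cite[Proposition~2.21]{EL}. So there is no in-paper argument to compare against, and your task was in effect to reconstruct the Elagin--Lunts proof.

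Your approach is the standard and correct one: compute all relevant $\Ext$-groups on $S$ via the projection formula for the blow-up, using $\mathbf{R}p_*\mathcal{O}_S=\mathbf{R}p_*\mathcal{O}_S(E)=\mathcal{O}_{S'}$ and $\mathbf{R}p_*\mathcal{O}_S(-E)=\mathcal{I}_q$, and then do the bookkeeping. The case analysis for (1) and (2) is accurate --- in particular, your identification that the only ideal-sheaf occurrence in the semiorthogonality check is $(a,b)=(m+1,m)$, and that for (2) the unavoidable ideal-sheaf case $\alpha<m<\beta$ is handled by the one-sided implication $H^{>0}(\mathcal{I}_q(C))=0\Rightarrow H^{>0}(\mathcal{O}_{S'}(C))=0$, is exactly right. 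The reduction of (3) to (2) via the compatibility of augmentation with cyclic shifts is also correct.

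One small point: your treatment of $k=n+1$ for part (2) is left as ``the analogous direct computation''. This is fine, but note that if you write out the line bundles for $\mathrm{augm}_{p,n+1}(A')$ you will see a $-2E$ twist appear between $\mathcal{E}_1$ and $\mathcal{E}_{n+1}$, which is not covered by your three pushforward formulas. However, to deduce that $A'$ is strong exceptional you only need $\Ext$'s among $\mathcal{E}_1,\ldots,\mathcal{E}_n$ (the first $n$ bundles already sit over $\mathcal{E}'_1,\ldots,\mathcal{E}'_n$), and those involve at most a single $\pm E$; so the $\pm 2E$ term never enters and the argument goes through unchanged. Alternatively, one can observe that $\mathrm{augm}_{n+1}(A')$ is the reversal (dual collection) of $\mathrm{augm}_1$ of the reversed $A'$, and reversal preserves the strong exceptional property.
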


On del Pezzo surfaces, we have the following result:
\begin{thm}\cite[Theorem 1.3]{EL}
Any full exceptional collection of line bundles on a smooth del Pezzo surface is a standard augmentation
\end{thm}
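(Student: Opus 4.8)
The plan is to recast the statement in terms of toric systems and to argue by induction on $\mathrm{rk}\,Pic(S)$, using the augmentation calculus of Hille--Perling. First I would reduce to toric systems: a full exceptional collection of line bundles on $S$ has length $n=\mathrm{rk}\,K_0(S)=\mathrm{rk}\,Pic(S)+2$, and twisting the whole collection by a fixed line bundle changes neither its associated toric system nor the property of being a standard augmentation, so it suffices to show that every \emph{exceptional toric system} $A=\{A_1,\ldots,A_n\}$ on a del Pezzo surface is a standard augmentation. The base of the induction is $\mathrm{rk}\,Pic(S)\le 2$: then either $S\cong\mathbb{P}^2$, whose unique toric system $\{H,H,H\}$ is declared standard, or $S\cong\mathbb{P}^1\times\mathbb{P}^1$ or $S\cong\mathbb{F}_1$, which are Hirzebruch surfaces, on which every toric system is standard by Definition~\ref{def_augm1}.

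For the inductive step, assume $\mathrm{rk}\,Pic(S)=10-d\ge 3$, i.e.\ the degree satisfies $d=K_S^2\le 7$, so that $n=12-d\ge 5$. I would first establish the key claim that \emph{some entry $A_m$ of $A$ is the class of an irreducible $(-1)$-curve $E\subset S$}. Granting this, let $p\colon S\to S'$ be the contraction of $E$. Then $S'$ is again del Pezzo: $K_{S'}^2=d+1>0$, and for every irreducible curve $C'\subset S'$ with strict transform $\widetilde{C}\subset S$ one computes $-K_{S'}\cdot C'=-K_S\cdot\widetilde{C}+E\cdot\widetilde{C}\ge 1$ (using $K_S=p^*K_{S'}+E$ and $E^2=K_S\cdot E=-1$), so $-K_{S'}$ is ample by the Nakai--Moishezon criterion. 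Since $A_m=[E]$ is the exceptional class of the blow-up $p$, Proposition~\ref{prop_elemaugm} produces a toric system $A'$ on $S'$ with $A=\mathrm{augm}_{p,m}(A')$, and Proposition~\ref{prop_augmexc}(1) shows $A'$ is exceptional because $A$ is. As $\mathrm{rk}\,Pic(S')=\mathrm{rk}\,Pic(S)-1$, the induction hypothesis applies to $(S',A')$ and gives that $A'$ is a standard augmentation; hence $A=\mathrm{augm}_{p,m}(A')$ is an elementary augmentation of a standard augmentation, so it is itself a standard augmentation by Definition~\ref{def_augm1}. This closes the induction.

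It remains to prove the key claim, which I expect to be the main obstacle. By the relations in Definition~\ref{def_ts} every entry $A_m$ of a toric system is numerically left-orthogonal, so $A_m\cdot K_S=-(A_m^2+2)$; consequently it is enough to find an $m$ with $A_m^2=-1$, since then $A_m$ is a $(-1)$-class on the del Pezzo surface $S$, and every $(-1)$-class on a del Pezzo is represented by an irreducible $(-1)$-curve. To produce such an $m$ I would use the Hille--Perling correspondence (the mechanism behind Remark~\ref{remark_genpic} and \cite[Prop.~2.7]{HP}): the toric system $A$ is precisely the intersection data of the torus-invariant curves of a smooth complete toric surface $Y$ with $\mathrm{rk}\,Pic(Y)=n-2=\mathrm{rk}\,Pic(S)\ge 3$, and under this correspondence the self-intersections match, $(D_i^Y)^2=A_i^2$ for all $i$. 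A smooth complete toric surface of Picard rank $\ge 3$ is non-minimal --- the minimal ones being only $\mathbb{P}^2$ and the Hirzebruch surfaces --- so the toric minimal model program furnishes a torus-invariant $(-1)$-curve $D_m^Y$ on $Y$, and then $A_m^2=(D_m^Y)^2=-1$, as needed.

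The hard part is genuinely this last claim; it is the only place where ampleness of $-K_S$ is used in an essential way, since the del Pezzo hypothesis is needed both to upgrade a numerical $(-1)$-class to an honest $(-1)$-curve and to guarantee that the contraction $S'$ is again del Pezzo (so that the induction can continue), while the exceptional --- as opposed to merely numerically exceptional --- hypothesis enters nowhere except through Proposition~\ref{prop_augmexc}(1). One could instead try to prove the claim by a purely combinatorial argument from the relations of Definition~\ref{def_ts} together with the identity $\sum_i A_i^2=K_S^2-2n=3d-24<0$ for $d\le 7$; but routing through the associated toric surface is the most transparent.
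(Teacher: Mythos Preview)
The paper does not supply its own proof of this statement; it is quoted from \cite{EL} as a known result and used as a black box throughout. There is therefore nothing in the present paper to compare your argument against.

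That said, your proposal is a sound independent proof. The inductive scheme is the natural one, and the key claim --- that some $A_m$ satisfies $A_m^2=-1$ --- does follow from the Hille--Perling construction: a toric system with $n\ge 5$ entries has the same self-intersection sequence as the torus-invariant boundary of a smooth complete toric surface of Picard rank $n-2\ge 3$, and such a surface always has a torus-invariant $(-1)$-curve by the toric MMP. On a del Pezzo surface a class $D$ with $D^2=D\cdot K_S=-1$ is automatically the class of an irreducible $(-1)$-curve (Riemann--Roch gives $h^0(D)\ge 1$, and $-K_S\cdot D=1$ with $-K_S$ ample forces irreducibility), so Proposition~\ref{prop_elemaugm} applies and Proposition~\ref{prop_augmexc}(1) lets the induction continue. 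One bibliographic caveat: the Hille--Perling statement you actually need is their construction of a smooth complete toric surface from a toric system with matching boundary self-intersections; this is in \cite{HP} but is more than what Remark~\ref{remark_genpic} or \cite[Prop.~2.7]{HP} alone asserts, so you should cite the relevant result there explicitly.
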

In light of this theorem, if $S$ is a del Pezzo surface of degree $\geq 3$ not isomorphic to $\mathbb{P}^1\times\mathbb{P}^1$ and $\{\cO_S(D_1),\ldots,\cO_S(D_n)\}$ is a full strong exceptional collection of line bundles on $S$, then the collection comes from performing multiple steps of elementary augmentation on $\{\cO,\cO(H),\cO(2H)\}$ on $\mathbb{P}^2$ (see also Table 3 in \cite{EXZ}). The condition (\ref{technical}) then is the same as requiring we never perform elementary augmentation of the last type (\ref{eq_augmentation3}).

\subsection{Quivers and quiver representations} A quiver $Q$ is given by two sets $Q_{vx}$ and $Q_{ar}$, where the first set is the set of vertices and the second is the set of arrows, along with two functions $s,t:Q_{ar}\to Q_{vx}$ specifying the source and target of an arrow. The path algebra $\mathbf{k}Q$ is the associative $\mathbf{k}$-algebra whose underlying vector space has a basis consists of elements of $Q_{ar}$. The product of two basis elements is defined by concatenation of paths if possible, otherwise $0$. The product of two general elements is defined by extending the above linearly.\
    A bound quivers is a pair $(Q,I)$. Here $Q$ is a quiver and $I$ is a two sided ideal of $\mathbf{k}Q$ generated by elements of the form $\sum_{i=1}^nk_ip_i$, where $k_i\in \mathbf{k}^*$ and $p_i$ are paths with same heads and same tails for $i\in\{1,\ldots,n\}$. We simply use $Q$ to denote this pair when the existence of $I$ is understood.\\ 
    
    Let $Q$ be a quiver. A quiver representation $R=(R_v,r_a)$ consists of a vector space $R_v$ for each $v\in Q_{vx}$ and a morphism of vector spaces $r_a:R_{s(a)}\to R_{t(a)}$ for each $a\in Q_{ar}$. For a bound quivers $(Q,I)$, a representation  $R=(R_v,r_a)$ is same as above, with the additional condition that 
    \begin{align*}
        \sum_{i=1}^nk_ir_{p_i}=0
    \end{align*} if $\sum_{i=1}^nk_ip_i$ is a generator of $I$. A subrepresentation of $R$ is a pair $R'=(R_v',r_a')$ where $R_v'$ is a subspace of $R_v$ for each $v\in Q_{vx}$ and $r'_a:R'_{s(a)}\to R'_{t(a)}$ a morphism of vector spaces for each $a\in Q_{ar}$ such that $$r'_a=r_a|_{R'_{s(a)}}$$ and 
    \begin{equation}\label{subset}
        r_a(R'_{s(a)})\subset R'_{t(a)}
    \end{equation}
    Thus we have the commutative diagram
    \[ \begin{tikzcd}
      R'_i \arrow[r,"r'_a"] \arrow{d}{\iota_i} & R'_j \arrow{d}{\iota_j} \\%
      R_i \arrow{r}{r_a}& R_j
      \end{tikzcd}
      \]
    for any arrow $a$ from $i$ to $j$.
    We use $R'\subset R$ to denote that $R'$ is a subrepresentation of $R$.

    If the vertices of a quiver has a natural ordering, as it will be the case when we discuss quiver of sections of an exceptional collection of line bundles, we define dimension vector $\vec{d}$ so $d_i$ is the dimension of the vector space $R_i$ at that vertex. We call the set of vertices where $R_v$ has positive dimension the support of $R$.\\
    
    In this paper, we are particularly interested in representations with dimension vector $\mathbf{1}=(1,\ldots,1)$. Notice when $R$ is a representation with dimension vector $\mathbf{1}$, and $R'\subset R$, all the inclusion maps $$\iota_k:R'_k\to R_k$$ are either zero map or identity. We prove the following easy lemma:
    \begin{lem}\label{subrep}
      Let $(Q,I)$ be a bound quivers whose vertices are label by $\{0,1,2,\ldots,n\}$ and $R$ be a representation of $Q$ with dimension vector $\mathbf{1}$.Then any subrepresentation $R'$ is determined by its dimension vector $\vec{d}$. Moreover, a vector $\vec{d}$ of size $n+1$ with entries $0$ and $1$ is the dimension vector of a subrepresentation of $R$ if and only if $r_a=0$ for all $a\in Q_{ar}$ with $d_{s(a)}=1$ and $d_{t(a)}=0$.
      \end{lem}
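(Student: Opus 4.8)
The statement is essentially bookkeeping about what it means for a collection of subspaces to be closed under the arrow maps, specialized to the case where every vector space is one-dimensional. First I would observe that, since each $R_v$ is $1$-dimensional, a choice of subspace $R'_v \subseteq R_v$ is the same as a choice of element of $\{0,1\}$, namely $R'_v = 0$ or $R'_v = R_v$; hence the data of a tuple of subspaces $(R'_v)_{v}$ is literally the same as the data of a vector $\vec d \in \{0,1\}^{n+1}$. This already proves the first assertion modulo the compatibility conditions: once we know the subspaces $R'_v$, the maps $r'_a$ are forced to be $r_a|_{R'_{s(a)}}$, so a subrepresentation is completely determined by $\vec d$ — there is no further choice to make.

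\textbf{Key steps.} Next I would address the ``if and only if'' characterizing which $\vec d$ actually occur. The only content in the definition of a subrepresentation (beyond $r'_a = r_a|_{R'_{s(a)}}$) is the inclusion condition \eqref{subset}: $r_a(R'_{s(a)}) \subseteq R'_{t(a)}$ for every arrow $a$. I would run through the four cases according to the values of $d_{s(a)}$ and $d_{t(a)}$:
\begin{itemize}
\item If $d_{s(a)} = 0$, then $R'_{s(a)} = 0$, so $r_a(R'_{s(a)}) = 0 \subseteq R'_{t(a)}$ automatically, regardless of $d_{t(a)}$.
\item If $d_{s(a)} = 1$ and $d_{t(a)} = 1$, then $R'_{t(a)} = R_{t(a)}$, so $r_a(R'_{s(a)}) \subseteq R_{t(a)} = R'_{t(a)}$ automatically.
\item If $d_{s(a)} = 1$ and $d_{t(a)} = 0$, then \eqref{subset} reads $r_a(R_{s(a)}) \subseteq 0$, i.e. $r_a = 0$; this is a genuine constraint.
\end{itemize}
So the conjunction of \eqref{subset} over all arrows is equivalent to: $r_a = 0$ for every arrow $a$ with $d_{s(a)} = 1$ and $d_{t(a)} = 0$. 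Finally I would note that one must also check the relations in $I$ are satisfied by $(r'_a)$, but since $r'_a = r_a|_{R'_{s(a)}}$ is a restriction of $r_a$, any relation $\sum_i k_i r_{p_i} = 0$ holding for $R$ restricts to the corresponding relation for $R'$; so the bound-quiver conditions impose nothing extra. Assembling these observations gives both halves of the lemma.

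\textbf{Main obstacle.} There is no real obstacle here — the proof is a direct unwinding of definitions, and the only point requiring any care is making sure the case analysis on $(d_{s(a)}, d_{t(a)})$ is exhaustive and that the two ``automatic'' cases are genuinely automatic (which uses only that $0$ is the zero subspace and $R_{t(a)}$ is the whole target). The compatibility with the ideal $I$ is the one place where a careless writer might think something needs to be checked, and I would include a single sentence noting it is inherited by restriction.
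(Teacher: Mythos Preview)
Your proposal is correct and follows essentially the same approach as the paper: observe that one-dimensional spaces have only two subspaces (determined by dimension), so $\vec d$ determines $R'$, and then note that the inclusion condition \eqref{subset} is automatic except in the case $d_{s(a)}=1$, $d_{t(a)}=0$, where it forces $r_a=0$. Your case analysis is a slightly more explicit version of the paper's one-line remark, and your added sentence about the relations in $I$ being inherited by restriction is a reasonable clarification that the paper leaves implicit.
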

      \begin{proof}
      Since $\dim R_i=1$, its subspaces are determined by dimensions. Moreover, we see the morphism of subspaces $r'_a$ are restrictions of $r_a$, hence the dimension vector $\vec{d}$ determines $R'_i$ for all $i\in Q_{vx}$.\\
      Given any vector $\vec{d}$ as in the second part of the lemma, it is the dimension of a vector subspace if (\ref{subset}) is satisfied. Note (\ref{subset}) is always true unless for arrows with $d_{s(a)}=1$ and $d_{t(a)}=0$, in which case we must have $r_a=0$.
      \end{proof}
      \begin{rem}
      We will use Lemma \ref{subrep} to construct subrepresentations. We will do so by prescribing dimension of the subrepresentation at each vertex, and check the vanishing of arrows as in Lemma \ref{subrep} along the way, until we have the whole dimension vector, which determines the subrepresentation.
      \end{rem}
      \begin{rem}\label{comparison}
      If $R$, $T$ are both representations with dimensional vector $\mathbf{1}$ and $R$ has more arrows with value $0$, i.e. $t_a=0$ implies $r_a=0$ for $a\in Q_{ar}$, then $R$ has more subrepresentations than $T$, more precisely, if $T$ has a subrepresentation with dimension vector $\vec{d}$, then $S$ also has a subrepresentation with the same dimension vector.
      \end{rem}

    \subsection{Moduli space of semistable representations of a quiver}
    Given a bound quivers $(Q,I)$,a weight is an element $\theta\in\Z^N$ where $N=|Q_{vx}|$ such that $\sum_{i=1}^{N}\theta_i=0$. Let $\theta=(\theta_1,\ldots,\theta_{N})$ be a weight, we defined its toric form to be
    \begin{align*}
        (-\theta_1,-\theta_1-\theta_2,\ldots,-\theta_1-\theta_2-\ldots-\theta_{N-1})\in \Z^{n-1}
    \end{align*}
    It is an easy exercise to see that one can recover a weight from its toric form.
    \begin{defn}
      A weight is admissible if every entry of its toric form is a positive integer.
    \end{defn}
    For a weight $\theta$, the weight function is defined by by:
    \begin{equation*}
        \theta(S)=\sum_{i=1}^N d_i \theta_i
    \end{equation*}
    where $S$ is a representation of $Q$ and $d_i$ and $\theta_i$ are the $i$-th entries of $\vec{d}$ and $\theta$ respectively. We recall the definition of semi-stability:
    \begin{defn}
        A representation $R$ is $\theta$-semistabe if for any subrepresentation $R'\subset R$
    \begin{equation*}
        \theta(R')\geq0
    \end{equation*}
    $R$ is $\theta$-stable if all the above inequalities are strict.
    \end{defn}
    
    We restrict our attention to $R$ with dimension vector $\mathbf{1}$. Given a bound quivers $(Q,I)$, we can associate to it an affine shceme $\mathrm{Rep(Q)}$ called the representation scheme of $(Q,I)$. The coordinate ring of this affine shceme is the quotient of $\mathbf{k}[a\in Q_{ar}]$ by the ideal $J$ which is generated by generators $\sum_{i=1}^nk_ip_i$ of $I$ treated as  elements in the above polynomial ring. It is obvious from the definition that closed points of representation scheme are in 1-to-1 correspondence with representations of $Q$ with dimension vector $\mathbf{1}$. For a weight $\theta$, the set of $\theta$-semistable representations forms an open subscheme $\mathrm{Rep(Q)}^{SS}_\theta$ of $\mathrm{Rep(Q)}$, the set of $\theta$-stable representations forms an open subscheme $\mathrm{Rep(Q)}^{S}_\theta$ of $\mathrm{Rep(Q)}^{SS}_\theta$.
    
    The group $(\mathbf{k}^*)^{Q_{vx}}$ acts by incidence on $\mathrm{Rep(Q)}$, in other words, it acts by $(g\cdot a)=g_{t(a)}r_ag^{-1}_{s(a)}$. Apparently, the diagonal subgroup $\mathbf{k}_{\text{diag}}^*$ of $(\mathbf{k}^*)^{Q_{vx}}$ consisting of elements of the form $(k,k,\ldots,k)$ for $k\in\mathbf{k}^*$ acts trivially on $\mathrm{Rep(Q)}$. So it is natural to only consider the action of $\mathrm{PGL(\mathbf{1})}:=(\mathbf{k}^*)^{Q_{vx}}/\mathbf{k}_{\text{diag}}^*$. 
    \begin{defn}
      Two representations of dimension vector $\mathbf{1}$ are isomorphic if they are in the same orbit under the action of $\mathrm{PGL(\mathbf{1})}$.\
    \end{defn}

    Give a weight $\theta$, the moduli space of $\theta$-semistable representation with dimension vector $\mathbf{1}$ is the GIT quotient
    \begin{align*}
        M_\theta:&=\mathrm{Rep(Q)}//_\theta \mathrm{PGL(\mathbf{1})}
    \end{align*}
    We mention a few facts about $M_\theta$. For details, the readers are referred to \cite{King}.
    An equivalent definition of $M_{\theta}$ is to consider the graded ring
       $$ B_{\theta}= \bigoplus_{r\geq 0}B(r\theta)$$
    where $B(r\theta)$ is $r\theta$-semi-invariant functions in the coordinate ring of $\mathrm{Rep(Q)}$. Then the GIT quotient is defined as 
    \begin{equation*}
        M_\theta=\Proj(B_\theta)
    \end{equation*}
    From this definition, it is easy to see that $M_{\theta}$ is a reduced projective scheme. Note if all $\theta$-semistable representations are $\theta$-stable, i.e. $\mathrm{Rep(Q)}^{SS}_{\theta}=\mathrm{Rep(Q)}^{S}_{\theta}$, then $M_{\theta}$ is the fine moduli space of $\theta$-stable representations, in particular, the closed points of $M_{\theta}$ are in 1-to-1 correspondence with the isomorphism classes of $\theta$-stable representations.
    \
    We now give an easy criterion for obtaining fine moduli spaces as above.
    \begin{lem}\label{criterion}
    With the notions above, if for any proper nonempty subset $P$ of $Q_{vx}$, we have $\sum_{i\in P}\theta_i\neq 0$, then any semistable representation $R$ is in fact stable. In particular, $M_{\theta}$ is a fine moduli space. 
    \end{lem}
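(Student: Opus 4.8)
The plan is to reduce the notion of $\theta$-stability for dimension vector $\mathbf{1}$ to the purely combinatorial hypothesis on $\theta$, using Lemma \ref{subrep}. First I would fix the convention (following \cite{King}) that a representation $R$ is $\theta$-stable precisely when $\theta(R')>0$ for every subrepresentation $R'$ with $0\neq R'\subsetneq R$: indeed $\theta(0)=0$ and $\theta(R)=\sum_i d_i\theta_i=\sum_i\theta_i=0$ automatically for a dimension-$\mathbf{1}$ representation, so these two trivial subrepresentations are always excluded from the stability inequalities, and what must be checked is exactly the proper nonzero ones.

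Next I would apply Lemma \ref{subrep}: since $R$ has dimension vector $\mathbf{1}$, any subrepresentation $R'\subset R$ is determined by its dimension vector $\vec d$, whose entries lie in $\{0,1\}$. Setting $P=\{i\in Q_{vx}:d_i=1\}$, the conditions $R'\neq 0$ and $R'\neq R$ translate exactly into $P$ being a nonempty proper subset of $Q_{vx}$, and the weight function evaluates to $\theta(R')=\sum_{i\in Q_{vx}}d_i\theta_i=\sum_{i\in P}\theta_i$. Thus checking stability of $R$ amounts to checking $\sum_{i\in P}\theta_i>0$ for every such $P$ that actually arises as the support of a subrepresentation.

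Finally I would combine the two inputs: if $R$ is $\theta$-semistable then $\theta(R')\geq 0$ for all subrepresentations, i.e. $\sum_{i\in P}\theta_i\geq 0$ for every nonempty proper $P$ arising as above; but the hypothesis of the lemma says $\sum_{i\in P}\theta_i\neq 0$ for \emph{every} nonempty proper subset $P\subset Q_{vx}$, in particular for those arising from subrepresentations. Hence each such inequality is strict, so $R$ is $\theta$-stable. Since $R$ was an arbitrary $\theta$-semistable representation of dimension vector $\mathbf{1}$, we get $\mathrm{Rep(Q)}^{SS}_\theta=\mathrm{Rep(Q)}^{S}_\theta$, and by the remark preceding the lemma $M_\theta$ is then the fine moduli space of $\theta$-stable representations with its closed points in bijection with isomorphism classes of such representations.

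I do not expect any real obstacle here: the entire content is Lemma \ref{subrep} (which lets one forget the arrow maps and argue with subsets of vertices) together with the hypothesis on $\theta$. The only points needing a line of care are the bookkeeping of the excluded trivial subrepresentations $R'=0$ and $R'=R$ in the definition of stability, and the observation that the hypothesis is stated for all nonempty proper subsets and hence in particular covers the supports of all proper nonzero subrepresentations.
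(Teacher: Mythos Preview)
Your proposal is correct and follows essentially the same approach as the paper's proof: both arguments observe that for a dimension-$\mathbf{1}$ representation one has $\theta(R')=\sum_{i\in\mathrm{supp}(R')}\theta_i$, so a strictly semistable representation would force a proper nonempty subset with zero sum, contradicting the hypothesis. The paper compresses this into two lines via the contrapositive, while you spell out the direct implication and explicitly invoke Lemma~\ref{subrep}, but the content is identical.
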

    \begin{proof}
      If $R$ is strictly semistable, then there exist a proper nonzero subrepresentation $R'$ such that $ \theta(R')=\sum_{i\in supp(R')}\theta_i =0$, but this cannot happen given the conditions in the statement.
    \end{proof}

    \subsection{Quivers of Sections} The main reference for this section is Craw-Smith\cite{CS} and Craw-Winn\cite{CW}. We mention that our indexing is different since we are concerned with the quiver with path algebra $\mathcal{A}^{op}$ instead of $\mathcal{A}$ as in the introduction.\\
    \noindent Let $\mathcal{L}=\{L_1,L_2,\ldots,L_n\}$ be a collection of line bundles on a projective variety $X$. For $1\leq i,j\leq n$, we call a section $s\in H^0(X,L_j^{\vee}\otimes L_i)$ irreducible if $s$ does not lie in the images of the multiplication map
    \begin{equation*}
        H^0(X,L_j^{\vee}\otimes L_k)\otimes_{\mathbf{k}}H^0(X,L_k^{\vee}\otimes L_i)\to H^0(X,L_j^{\vee}\otimes L_i)
    \end{equation*}
    for $k\neq i,j$. 
    \begin{defn}
      The quiver of sections of the collection $\mathcal{L}$ on $X$ is defined to be a quiver with vertex set $Q_{vx}=\{1,\ldots,n\}$ and where the arrows from $i$ to $j$ corresponds to a basis of irreducible sections of $H^0(X,L_{(n+1)-j}^{\vee}\otimes L_{(n+1)-i})$.
    \end{defn}
    \begin{rem}
        In Section 3, we will show there is a way of choosing the sections that the arrows represents which will aid our computation.
    \end{rem}
    We mention one of the basic properties of a quiver of sections.
    \begin{lem}{\cite{CW}}
    The quiver of sections $Q$ is connected, acyclic and $1\in Q_{vx}$ is the unique source.
    \end{lem}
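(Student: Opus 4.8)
The plan is to read the shape of $Q$ off the vanishings built into an exceptional collection. First I would reindex the vertices by $\sigma(i)=n+1-i$, so that an arrow $i\to j$ of $Q$ is a basis vector of the space of irreducible sections inside $H^0(X,L_{\sigma(j)}^{\vee}\otimes L_{\sigma(i)})=\Hom(\mathcal{E}_{\sigma(j)},\mathcal{E}_{\sigma(i)})$.

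For acyclicity and the fact that $1$ is a source: by condition (2) in the definition of an exceptional collection, $\Hom(\mathcal{E}_a,\mathcal{E}_b)=0$ whenever $b<a$, so the space above vanishes unless $\sigma(i)\ge\sigma(j)$, i.e.\ unless $i\le j$; and there are no loops, since $\Hom(\mathcal{E}_{\sigma(i)},\mathcal{E}_{\sigma(i)})=\mathbf{k}$ is spanned by the identity, which is not an irreducible section. Hence every arrow $i\to j$ has $i<j$. It follows immediately that $Q$ has no oriented cycle, so $Q$ is acyclic; that no arrow targets $1$, so $1$ is a source; and (symmetrically) that $n$ has no outgoing arrow.

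For the uniqueness of the source together with connectedness, I would reduce both to the single claim that \emph{every vertex $j\ge 2$ receives at least one arrow}. Indeed, granting this, from any vertex one repeatedly follows an incoming arrow backwards; by acyclicity the walk terminates, necessarily at a vertex with no incoming arrow, which by the claim must be $1$. This shows at once that $Q$ is connected and that $1$ is its unique source. To prove the claim, fix $j\ge 2$ and set $p=\sigma(j)\in\{1,\dots,n-1\}$. Suppose we have produced some $q>p$ with $\Hom(\mathcal{E}_p,\mathcal{E}_q)\neq 0$; choosing $q$ minimal with this property, any nonzero $s\in\Hom(\mathcal{E}_p,\mathcal{E}_q)$ is irreducible, because a factorization of $s$ through some $\mathcal{E}_k$ with $k\neq p,q$ would force $\Hom(\mathcal{E}_p,\mathcal{E}_k)\neq 0$ and $\Hom(\mathcal{E}_k,\mathcal{E}_q)\neq 0$, hence $p\le k\le q$ by condition (2), while $k=p$ and $k=q$ are excluded and $p<k<q$ contradicts minimality of $q$. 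Thus $s$ is an arrow $\sigma(q)\to j$, and $\sigma(q)=n+1-q<n+1-p=j$, a genuine incoming arrow at $j$.

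The hard part is exactly the production of such a $q$, i.e.\ a nonzero morphism from $\mathcal{E}_p$ to a later member of the collection. This is the only point where fullness of the collection is used, and it cannot be avoided: for an exceptional collection of line bundles that does not generate $\mathcal{D}^b(\coh(X))$ the lemma can fail. I would argue by contradiction as in \cite{CW}: if $\Hom(\mathcal{E}_p,\mathcal{E}_q)=0$ for all $q>p$, then since the collection is strong one has $\Hom(\mathcal{E}_p,\mathcal{E}_q[m])=0$ for all $q>p$ and all $m\in\Z$, and together with condition (2) the exceptional object $\mathcal{E}_p$ is completely orthogonal to $\mathcal{E}_{p+1},\dots,\mathcal{E}_n$; combined with the fact that $\{\mathcal{E}_i\}$ generates $\mathcal{D}^b(\coh(X))$, this contradicts the connectedness of $X$, which prevents $\mathcal{D}^b(\coh(X))$ from decomposing as an orthogonal product. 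Assembling the three parts—monotonicity of arrows, the minimality trick, and this last orthogonality argument—gives the lemma.
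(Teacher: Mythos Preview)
The paper does not prove this lemma; it is quoted verbatim from \cite{CW} without argument, so there is no ``paper's proof'' to compare against. It is worth noting that in \cite{CW} the collection $\mathcal{L}=(\mathcal{O}_X,L_1,\dots,L_r)$ is assumed to consist of effective line bundles with $L_0=\mathcal{O}_X$; there the uniqueness of the source is immediate, since $H^0(X,L_i)\neq 0$ gives a path from vertex~$0$ to every other vertex. Your argument instead works in the full strong exceptional setting, which is the paper's actual context, and is a reasonable adaptation.

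Your proof of acyclicity and that $1$ is a source is correct, as is the reduction of connectedness and uniqueness of the source to the claim that every vertex $j\ge 2$ receives an arrow, and the minimality trick producing an irreducible section once some later $\Hom$ is nonzero. The gap is in the last paragraph. From $\Hom^\bullet(\mathcal{E}_p,\mathcal{E}_q)=0$ for all $q>p$ together with condition~(2) you correctly conclude that $\mathcal{E}_p$ is two-sided orthogonal to $\langle\mathcal{E}_{p+1},\dots,\mathcal{E}_n\rangle$. But this does \emph{not} yield a completely orthogonal decomposition of $\mathcal{D}^b(\coh(X))$: the objects $\mathcal{E}_1,\dots,\mathcal{E}_{p-1}$ may still map nontrivially into $\mathcal{E}_{p+1},\dots,\mathcal{E}_n$, so $\langle\mathcal{E}_1,\dots,\mathcal{E}_p\rangle$ and $\langle\mathcal{E}_{p+1},\dots,\mathcal{E}_n\rangle$ need not be mutually orthogonal. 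Thus the appeal to indecomposability of $\mathcal{D}^b(\coh(X))$ for connected $X$ does not go through as written.

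A clean repair for \emph{connectedness} is to argue at the level of components: if $Q=A\sqcup B$ with no arrows between $A$ and $B$, then there are no paths between them, so by the isomorphism $e_j(\mathbf{k}Q/I_{\mathcal{L}})e_i\cong\Hom(L_{\sigma(j)},L_{\sigma(i)})$ one gets $\Hom(L_{\sigma(a)},L_{\sigma(b)})=0=\Hom(L_{\sigma(b)},L_{\sigma(a)})$ for all $a\in A$, $b\in B$; strongness then upgrades this to $\Hom^\bullet=0$, producing a genuine orthogonal decomposition of $\mathcal{D}^b(\coh(X))$ and the desired contradiction. For \emph{uniqueness of the source} your stronger claim (every $j\ge2$ has an incoming arrow) still needs an argument beyond what you wrote; one route, closer to \cite{CW}, is to show directly that $\Hom(L_p,L_n)\neq 0$ for every $p<n$ in the paper's setting, which in the toric-system language amounts to $h^0(A_{p,\dots,n-1})>0$.
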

    The quiver of sections only include information about the sections in $H^0(X,L_{(n+1)-j}^{\vee}\otimes L_{(n+1)-i})$, but left relations between them behind. We now define a two sided ideal
    \begin{defn}
    Let $I_\mathcal{L}$ be a two sided ideal in $\mathbf{k}Q$
    \begin{equation*}
        I_\mathcal{L}=\big(\sum_{k=1}^Na_kp_k|p_k \text{ are paths from }i \text{ to }j\ \text{ and }\sum_{k=1}^Na_kp_k \text{ represents } 0 \text{ in } H^0(X,L_{(n+1)-j}^{\vee}\otimes L_{(n+1)-i})\big)
    \end{equation*}
    We call the pair $(Q,I_\mathcal{L})$ the bound quiver of sections of the collection $\mathcal{L}$.
    \end{defn}
    \begin{prop}{\cite{CS}\cite{CW}}
      The quotient algebra $\mathbf{k}Q/I_\mathcal{L}$ is isomorphic to $\mathcal{A}^{op}=\mathrm{End}_{\cO_X}(\oplus_{i=1}^nL_i^\vee)$ and for $1\leq i,j\leq n$, we have $e_j(\mathbf{k}Q/I_\mathcal{L})e_i\cong H^0(X,L_{(n+1)-j}^{\vee}\otimes L_{(n+1)-i})$.
    \end{prop}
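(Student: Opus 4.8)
The plan is to produce an explicit surjective $\mathbf{k}$-algebra homomorphism $\Phi\colon\mathbf{k}Q\to\mathcal{A}^{op}$ and to identify its kernel with $I_\mathcal{L}$. Write $\epsilon_m\in\mathcal{A}^{op}=\mathrm{End}_{\cO_X}(\bigoplus_{l=1}^nL_l^\vee)$ for the projection onto the summand $L_m^\vee$; the $\epsilon_m$ form a complete set of orthogonal idempotents, and $\epsilon_b\mathcal{A}^{op}\epsilon_a=\Hom(L_a^\vee,L_b^\vee)=H^0(X,L_b^\vee\otimes L_a)$ for all $a,b$. First I would define $\Phi$ on the path-algebra generators: send the trivial path $e_i$ to $\epsilon_{(n+1)-i}$, and send an arrow $a\colon i\to j$ — which by construction is a chosen irreducible section $s_a\in H^0(X,L_{(n+1)-j}^\vee\otimes L_{(n+1)-i})=\Hom(L_{(n+1)-i}^\vee,L_{(n+1)-j}^\vee)$ — to the morphism $s_a$. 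Since the $\epsilon_{(n+1)-i}$ are orthogonal idempotents summing to $\id$ and $s_a$ lies in the corner $\epsilon_{(n+1)-j}\mathcal{A}^{op}\epsilon_{(n+1)-i}$, the universal property of the path algebra produces a unique algebra homomorphism $\Phi$ with these values, sending a path $a_m\cdots a_1$ from $i$ to $j$ to the composite $s_{a_m}\circ\cdots\circ s_{a_1}\in\epsilon_{(n+1)-j}\mathcal{A}^{op}\epsilon_{(n+1)-i}$.

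The substantive point is surjectivity: for every pair $(i,j)$ one must show $H^0(X,L_{(n+1)-j}^\vee\otimes L_{(n+1)-i})$ is spanned by composites of irreducible sections, i.e.\ by the images under $\Phi$ of the paths $i\to j$. Because $Q$ is acyclic and $\mathcal{L}$ is an exceptional collection, $H^0(X,L_a^\vee\otimes L_b)=\Hom(L_a,L_b)$ vanishes unless $a\le b$, while $H^0(X,\cO_X)=\mathbf{k}$; so only pairs with $i\le j$ occur and I would induct on $j-i$. The base case $i=j$ is $H^0(X,\cO_X)=\mathbf{k}\cdot\id=\mathbf{k}\cdot\Phi(e_i)$. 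For $j-i>0$, the definition of irreducibility says that $H^0(X,L_{(n+1)-j}^\vee\otimes L_{(n+1)-i})$ is the span of the irreducible sections (a basis of which is the set of arrows $i\to j$) together with the images of the multiplication maps $H^0(X,L_{(n+1)-j}^\vee\otimes L_{(n+1)-k})\otimes H^0(X,L_{(n+1)-k}^\vee\otimes L_{(n+1)-i})\to H^0(X,L_{(n+1)-j}^\vee\otimes L_{(n+1)-i})$ over $k\ne i,j$; for such an image to be nonzero one needs $i\le k\le j$, hence $i<k<j$, so both tensor factors lie in Hom spaces with strictly smaller index difference and, by induction, in the image of $\Phi$. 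Hence $\Phi$ is surjective.

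Next I would identify the kernel. Since $\Phi(e_i)=\epsilon_{(n+1)-i}$, the map $\Phi$ respects the Peirce decompositions $\mathbf{k}Q=\bigoplus_{i,j}e_j(\mathbf{k}Q)e_i$ and $\mathcal{A}^{op}=\bigoplus_{i,j}\epsilon_{(n+1)-j}\mathcal{A}^{op}\epsilon_{(n+1)-i}$, so $\ker\Phi=\bigoplus_{i,j}\bigl(\ker\Phi\cap e_j(\mathbf{k}Q)e_i\bigr)$. An element of $e_j(\mathbf{k}Q)e_i$ is a $\mathbf{k}$-linear combination $\sum_k a_k p_k$ of paths from $i$ to $j$, and it lies in $\ker\Phi$ exactly when $\sum_k a_k p_k$ represents $0$ in $H^0(X,L_{(n+1)-j}^\vee\otimes L_{(n+1)-i})$, which is precisely the defining condition for a generator of $I_\mathcal{L}$. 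Thus $\ker\Phi\subseteq I_\mathcal{L}$, and the reverse inclusion is immediate because every such generator maps to $0$; hence $\ker\Phi=I_\mathcal{L}$. Therefore $\Phi$ descends to an isomorphism $\mathbf{k}Q/I_\mathcal{L}\cong\mathcal{A}^{op}$ which, restricted to the $(i,j)$ block, yields $e_j(\mathbf{k}Q/I_\mathcal{L})e_i\cong\epsilon_{(n+1)-j}\mathcal{A}^{op}\epsilon_{(n+1)-i}=H^0(X,L_{(n+1)-j}^\vee\otimes L_{(n+1)-i})$, as claimed.

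The step I expect to be the main obstacle is surjectivity: one must know that the irreducible sections really do generate every global section of $L_{(n+1)-j}^\vee\otimes L_{(n+1)-i}$ multiplicatively, and that "peeling off an intermediate vertex" is a genuinely terminating procedure. Finiteness and acyclicity of $Q$ — equivalently the vanishings $H^0(L_a^\vee\otimes L_b)=0$ for $a>b$ coming from the exceptional collection — are exactly what make the induction on $j-i$ well-founded; everything else is formal.
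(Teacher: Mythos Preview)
The paper does not supply its own proof of this proposition; it is quoted directly from \cite{CS} and \cite{CW}, so there is nothing to compare against. Your argument is a correct self-contained proof of the expected shape: build $\Phi$ from the universal property of the path algebra, prove surjectivity by inducting on $j-i$ using the definition of irreducible section, and read off the kernel from the Peirce decomposition.

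One small point worth tightening: the paper's Section~2.5 states the result for an arbitrary collection $\mathcal{L}$ of line bundles, not just an exceptional one, whereas your induction uses the vanishing $\Hom(L_a,L_b)=0$ for $a>b$. In the generality of \cite{CW} the collection need not be exceptional, but the quiver of sections is still acyclic (this is the preceding lemma), and that acyclicity---together with finiteness of $Q$---is what actually makes your induction terminate: any nonzero factorisation through an intermediate vertex strictly shortens the longest path length, so you can induct on that instead of on $j-i$. In the present paper every collection in sight is strong exceptional, so your version is perfectly adequate for the intended application; just be aware that the cited sources prove it under slightly weaker hypotheses.
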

    
    \begin{rem}
    Suppose $\mathcal{L}=\{L_1,L_2,\ldots,L_n\}$ is an exceptional collection of line bundles on a projective variety $X$, and $\mathcal{TS}=\{A_1,\ldots,A_n\}$ is the corresponding toric system. If we define $\mathcal{TS}^{op}=\{B_1\ldots,B_n\}$ by
    \begin{align*}
        B_i=
        \begin{cases}
        A_{n-i}&\text{if $i=1,2,\ldots,n-1$}\\
        A_n &\text{if $i=n$.}
        \end{cases}
    \end{align*}
    Note $\mathcal{TS}^{op}$ is also an exceptional toric system (it remains strong if $\mathcal{TS}$ is), and 
    for $1\leq i<j\leq n$, we have $e_j(\mathbf{k}Q/I_\mathcal{L})e_i\cong H^0(X,B_i+\ldots+B_{j-1})$. In fact, $\mathcal{TS}^{op}$ is the corresponding toric system of $\{L_n^\vee,L_{n-1}^\vee,\dots,L_1^\vee\}$
    \end{rem}

    Given any weight $\theta$ for $(Q,I_\mathcal{L})$, we can consider the moduli space of semistable representations $M_\theta$. There is a tautological rational map
    \begin{equation*}
        T:X\dashrightarrow M_\theta
    \end{equation*}
    so that if $T$ is defined at $x$, then  $$T(x)=\bigoplus_{i=0}^n(L_i^\vee)_x$$ Moreover, $T$ is defined at $x$ if $\bigoplus_{i=0}^n(L_i^\vee)_x$ can be represented by a $\theta$-semistable representation.
    
    \section{Augmentation for quiver of sections}
    In this section we discuss properties of quivers of sections coming from full strong toric systems, and define the operation of augmentation between them.\\
    
    Let $S_0$ be a rational surface, $\mathcal{TS}_0=\{A_1,\ldots,A_n\}$ a full strong toric system on $S_0$. Let $\pi:S\to S_0$ be a blow up of $S_0$ at a single point $P$ with the exceptional curve $E$. Let $$\mathcal{TS}=\{\pi^*(A_1),\ldots,\pi^*(A_{n-1-k}),\pi^*(A_{n-k})-E,E,\pi^*(A_{n-k+1})-E,\pi^*(A_{n-k+2}),\ldots, \pi^*(A_n)\}$$ be a full strong exceptional toric system obtained from standard augmentation of $\mathcal{TS}_0$ at position $n-1-k$.\\
 For convenience, we define the opposite toric system, denoted by $\mathcal{TS}^{op}$ on a surface $S$:
    \begin{align*}
        \mathcal{TS}_0^{op}&=\{B_1,B_2,\ldots,B_{n-1},B_n\}\\
        &=\{A_{n-1},A_{n-2},\ldots,A_2,A_1,A_n\}
    \end{align*}
    then 
    \begin{align*}
        \mathcal{TS}^{op}&=\{\pi^*(B_1),\ldots,\pi^*(B_{k-2}),\pi^*(B_{k-1})-E,E,\pi^*(B_{k})-E,\pi^*(B_{k+1}),\ldots,\pi^*(B_n)\}\\
        &=\{\pi^*(A_{n-1}),\ldots,\pi^*(A_{n-k+2}),\pi^*(A_{n-k+1})-E,E,\pi^*(A_{n-k})-E,\pi^*(A_{n-k-1}),\ldots,\pi^*(A_1),\pi^*(A_n)\}
    \end{align*}
    \begin{rem}
    We label the vertices of $Q_0$ by $1,2,\ldots,n$. And label the vertices of $Q$ by $1,\ldots,k-1,k,k',k+1,\ldots,n$. In this way, we obtain an 1-to-1 correspondence between the vertices of $Q$ and $Q_0$ except at the place where the augmentation is performed.
    \end{rem}
    \begin{defn}
      We use $e$ to denote the unique arrow from $k$ to $k'$ corresponding to the unique section of the line bundle $E$.
    \end{defn}

    Note we have $\pi_*(\cO_S)=\cO_{S_0}$, hence by adjunction and projection formula:
    \begin{align}
          \Hom_{\cO_S}(\pi^*(L_1),\pi^*(L_2))
          &=\Hom_{\cO_{S_0}}(L_1,L_2)
    \end{align}
    This computation shows the section quiver of $\mathcal{TS}$ is same as the section quiver of $\mathcal{TS}_0$ when we look at parts which are either between vertex $1$ to $k-1$ or between vertex $k+1$ to $n$. To be more precise, there is a bijection between the arrows of $Q$ and $Q_0$ from $i$ to $j$, if $i\geq k+1$ or $j\leq k-1$, so that two corresponding arrows represent the same section under $(3.3)$.
    \begin{rem}
    From now on we omit $\pi^*$ when no confusion will be caused.
    \end{rem}
    We now analyze the arrows from vertex $i$ to vertex $k'$. where $i<k$.
    \begin{lem}\label{left}
      If $i<k$, we have a natural embedding
      \begin{equation*}
          H^0(S,B_i+\ldots +B_{k-1}-E)\hookrightarrow H^0(S,B_i+\ldots+B_{k-1})
      \end{equation*}
      and $H^0(S,B_i+\ldots+B_{k-1})$ is spanned by the image of $H^0(S,B_i+\ldots B_{k-1}-E)$ and an element which represents a divisor $f$ such that $div(f)-E$ is not effective.
      \end{lem}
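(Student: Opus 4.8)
The plan is to read off both groups as spaces of global sections of consecutive partial sums of the augmented toric system $\mathcal{TS}^{op}$ on $S$, and to compare them through the ideal-sheaf sequence of $E$. Write $D:=B_i+\ldots+B_{k-1}$, a divisor on $S_0$; suppressing $\pi^*$ as agreed, the two groups in the statement are $H^0(S,\pi^*D-E)$ and $H^0(S,\pi^*D)$. Twisting the exact sequence $0\to\cO_S(-E)\to\cO_S\to\cO_E\to 0$ by $\cO_S(\pi^*D)$ and using that $\pi^*D\cdot E=0$, so that $\cO_S(\pi^*D)|_E\cong\cO_E\cong\cO_{\mathbb{P}^1}$, gives
\begin{equation*}
0\to\cO_S(\pi^*D-E)\to\cO_S(\pi^*D)\to\cO_E\to 0 .
\end{equation*}
Taking global sections yields the natural embedding $H^0(S,\pi^*D-E)\hookrightarrow H^0(S,\pi^*D)$ of the statement, together with the exact piece
\begin{equation*}
H^0(S,\pi^*D-E)\hookrightarrow H^0(S,\pi^*D)\to H^0(E,\cO_E)\to H^1(S,\pi^*D-E),
\end{equation*}
and $H^0(E,\cO_E)$ is one-dimensional.

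The crux is then the vanishing $H^1(S,\pi^*D-E)=0$ — indeed this is exactly what prevents the degenerate situation in which every section of $\pi^*D$ vanishes on $E$. I would obtain it by identifying $\pi^*D-E$ as a consecutive partial sum inside $\mathcal{TS}^{op}$: with the vertex labelling $1,\ldots,k-1,k,k',k+1,\ldots,n$, the segment running from position $i$ to position $k-1$ consists of the divisors $\pi^*B_i,\ldots,\pi^*B_{k-2},\pi^*B_{k-1}-E$, whose sum is precisely $\pi^*D-E$ (the hypothesis $i<k$ makes this segment non-empty and non-wrapping). Since $\mathcal{TS}$, hence $\mathcal{TS}^{op}$, is a full strong exceptional toric system, this partial sum is, up to order, a difference of two members of the associated strong exceptional collection of line bundles on $S$, i.e. a $\Hom$-space between them; strongness and exceptionality (the latter via the backward $\Hom$-vanishing) then say it is a strong left-orthogonal divisor, so in particular $h^1(\pi^*D-E)=0$.

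With this vanishing in hand, the exact sequence shows $H^0(S,\pi^*D)\twoheadrightarrow H^0(E,\cO_E)\cong\mathbf{k}$, so the image of $H^0(S,\pi^*D-E)$ has codimension exactly one. Choosing any $f\in H^0(S,\pi^*D)$ mapping to a nonzero element of $H^0(E,\cO_E)$ — that is, a section not vanishing identically on $E$ — we get $E\not\subset\operatorname{div}(f)$, so $\operatorname{div}(f)-E$ is not effective, while every section in the image of $H^0(S,\pi^*D-E)$ has divisor containing $E$; and $H^0(S,\pi^*D)$ is spanned by that image together with $f$. The one point demanding care is the index bookkeeping of the previous paragraph — correctly matching the segment $[i,k-1]$ with the list of divisors making up $\mathcal{TS}^{op}$ — after which the vanishing, and hence the lemma, is immediate.
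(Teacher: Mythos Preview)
Your argument is correct and follows essentially the same route as the paper: both use the ideal-sheaf sequence of $E$ twisted by $\pi^*D$, note that $\pi^*D\cdot E=0$, and then invoke the strong exceptional property of the augmented toric system to control the cokernel. The only cosmetic difference is that the paper, instead of arguing $h^1(\pi^*D-E)=0$ to get surjectivity onto $H^0(E,\cO_E)$, directly computes $h^0(\pi^*D-E)+1=h^0(\pi^*D)$ from the slo formula $h^0=D^2+2$ applied to both partial sums; your version via $h^1$-vanishing is equally valid and arguably cleaner, since it makes the non-vanishing of $f$ along $E$ transparent from the restriction map.
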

      \begin{proof}
    Consider the standard short exact sequence
    \begin{equation*}
        0\rightarrow \cO_S(-E)\to \cO_S\to \cO_E\to0
    \end{equation*}
    Twist this exact sequence by $\cO_S(B_i+\ldots+B_{k-1})$, and note $E\cdot (B_i+\ldots+B_{k-1})=0$, we obtain
    \begin{equation*}
        0\rightarrow \cO_S(B_i+\ldots+B_{k-1}-E)\to \cO_S(B_i+\ldots+B_{k-1})\to \cO_E\to0
    \end{equation*}
    This induce long exact sequence
    \begin{align*}
        0\to H^0(S,B_i+\ldots+B_{k-1}-E)\to H^0(S,B_i+\ldots+B_{k-1})\to \\
        H^0(E,\cO_E)\to H^1(S,B_i+\ldots+B_{k-1}-E)
    \end{align*}
    This gives the inclusion in the statement.
    Using the formula in preliminary, we see 
    \begin{equation*}
          h^0(S,B_i+\ldots B_{k-1}-E)+1= h^0(S,B_i+\ldots+B_{k-1})
    \end{equation*}
    Hence there is an effective divisor $F$ linearly equivalent to $B_i+\ldots+B_{k-1}$ such that $F-E$ is not effective. The section corresponding to $F$ and $H^0(S,B_i+\ldots B_{k-1}-E)$ spans $H^0(S,B_i+\ldots+B_{k-1})$ by looking at the dimension.
    \end{proof}
    \begin{rem}
    The computation at the beginning of this section tells us that we can assume the sections of arrows from vertex $i$ to vertex $k'$ in $Q$ are in 1-to-1 correspondence with the sections of arrows from vertex $i$ to vertex $k$ in $Q_0$. This lemma tells us  we can require in addition that all but one arrows from vertex $i$ to vertex $k'$ in $Q$ comes from arrows from vertex $i$ to vertex $k$ composed with $e$, while there is a unique arrow from vertex $i$ to vertex $k'$ which is not a multiple of $e$.
    \end{rem}

    Using exact same argument, we obtain mirror result for arrows from vertex $k$ to vertex $j$ where $j>k$.
      \begin{lem}\label{right}
      If $j>k$, we have a natural embedding
      \begin{equation*}
          H^0(S,-E+B_{k}+\ldots B_{j-1})\hookrightarrow H^0(S,B_k+\ldots+B_{j-1})
      \end{equation*}
      and $H^0(S,B_{k+1}+\ldots+B_{j-1})$ is spanned by the image of $H^0(S,-E+B_{k+1}+\ldots B_{j-1})$ and an element $g$ such that $div(g)-E$ is not effective.
    \end{lem}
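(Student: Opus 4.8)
The plan is to run the proof of Lemma~\ref{left} verbatim, with the two halves of the augmented toric system interchanged; write $D=B_k+\ldots+B_{j-1}$ and suppress $\pi^*$ as agreed. First I would twist the standard short exact sequence $0\to\cO_S(-E)\to\cO_S\to\cO_E\to 0$ by $\cO_S(D)$. Since $D$ is pulled back from $S_0$, we have $E\cdot D=0$, so the restriction $\cO_S(D)|_E$ is trivial and the twisted sequence reads
\begin{equation*}
0\to\cO_S(D-E)\to\cO_S(D)\to\cO_E\to 0.
\end{equation*}
Its long exact cohomology sequence begins $0\to H^0(S,D-E)\to H^0(S,D)\to H^0(E,\cO_E)\to H^1(S,D-E)$, which immediately gives the claimed embedding $H^0(S,D-E)\hookrightarrow H^0(S,D)$.

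Next I would pin down the two dimensions. Both $D$ and $D-E$ are cyclic segments of the augmented toric system $\mathcal{TS}$ (the segments $[k,\ldots,j]$ and $[k+1,\ldots,j]$ in the notation of Section~2), hence numerically left-orthogonal; and because $\mathcal{TS}$ is cyclic strong exceptional --- this is where Proposition~\ref{prop_augmexc} together with Theorem~\ref{theorem_checkonlyminustwo} enters --- both are in fact strong left-orthogonal. The Riemann--Roch formula for slo divisors recalled in Section~2 then gives $h^0(S,D)=D^2+2$ and $h^0(S,D-E)=(D-E)^2+2$; using $E\cdot D=0$ and $E^2=-1$ one computes $(D-E)^2=D^2-1$, hence
\begin{equation*}
h^0(S,D-E)+1=h^0(S,D).
\end{equation*}
In particular $H^1(S,D-E)=0$, the connecting map vanishes, and $H^0(S,D)\to H^0(E,\cO_E)$ is onto; pick $g\in H^0(S,D)$ mapping to a nonzero element. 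Then $div(g)-E$ cannot be effective, for otherwise $g$ would be divisible by the section of $\cO_S(E)$ cutting out $E$ and hence would lie in the image of $H^0(S,D-E)$. By the displayed dimension identity, the image of $H^0(S,D-E)$ together with $g$ spans $H^0(S,D)$, which is the second assertion.

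I do not expect any real obstacle: the one nontrivial input is that the augmented toric system $\mathcal{TS}$ is cyclic strong exceptional, which forces the partial sums $D$ and $D-E$ to be strong left-orthogonal and hence makes the Riemann--Roch count available; this is exactly the situation we are in, since by \cite{EL} the full strong exceptional collections on del Pezzo surfaces we consider are standard augmentations. The symmetry with Lemma~\ref{left} is literal --- replace $B_i+\ldots+B_{k-1}$ by $B_k+\ldots+B_{j-1}$ and the vertex $k'$ by the vertex $k$ --- so the argument carries over word for word.
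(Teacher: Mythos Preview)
Your proof is correct and follows exactly the route the paper indicates: it simply says ``using exact same argument'' as Lemma~\ref{left}, and your twist of the standard sequence, the long exact sequence, and the dimension count via Riemann--Roch for slo divisors is precisely that argument with the roles of the two sides of $E$ swapped. One small simplification: you need not detour through Proposition~\ref{prop_augmexc} and Theorem~\ref{theorem_checkonlyminustwo} to obtain cyclic strong exceptionality, since $\mathcal{TS}^{op}$ is already assumed strong exceptional at the start of Section~3, and for the non-wrapping segments $D$ and $D-E$ this alone forces strong left-orthogonality.
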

    \begin{defn}\label{arrow1}
      If $i<k$, we fix an arrow in $Q$ from $i$ to $k'$ which corresponds to a section
      \begin{align*}
          f\in H^0(S,B_i+\ldots+B_{k-1})
      \end{align*}
      such that $div(f)-E$ is not effective as in Lemma \ref{left}. Denote this arrow by $u_i$. We note $u_i$ is not the composition of an arrow from $i$ to $k$ with $e$. We use $w_i$ to denote the arrow in $Q_0$ from $i$ to $k$ that represents the same section as $f$ under the isomorphism
      \begin{equation*}
          H^0(S,\pi^*B_i+\ldots+(\pi^*B_{k-1}-E)+E)=H^0(S_0,B_i+\ldots+B_{k-1})
      \end{equation*}
      If $j>k$, we fix an arrow from $k$ to $j$ which corresponds to the section
      \begin{align*}
          g\in H^0(S,B_k+\ldots+B_{j-1})
      \end{align*}
      such that $div(g)-E$ is not effective as in Lemma \ref{right}. We denote this arrow by $u_j$. Note $u_j$ is not a composition $e$ with an arrow from $k'$ to $j$. We use $w_j$ denote the arrow in $Q_0$ from $i$ to $k$ that represents the same section as above.
    \end{defn}
    
    Finally we analyze arrows in $Q$ from vertex $i$ to vertex $j$ where $i<k<j$. Note $B_i+\ldots +(B_{k-1}-E)+E+(B_{k}-E)+\ldots+B_j=B_i+\ldots B_j-E$.
    
    \begin{lem}\label{cross arrow}
    If $i<k<j-1$, we have a natural embedding
      \begin{equation*}
          H^0(S,B_i+\ldots+ B_{j-1}-E)\hookrightarrow H^0(S,B_i+\ldots+B_{j-1})
      \end{equation*}
      and $H^0(S,B_i+\ldots+B_{j-1})$ is spanned by the image of $H^0(S,B_i+\ldots+ B_{j-1}-E)$ and the product of element $f$ in $H^0(S,B_i+\ldots+B_{k-1})\backslash H^0(S,B_i+\ldots B_{k-1}-E)$ with an element $g$ in $ H^0(S,B_{k+1}+\ldots+B_{j-1})\backslash H^0(S,-E+B_{k+1}+\ldots B_{j-1})$
      \end{lem}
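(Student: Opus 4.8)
The plan is to run exactly the same cohomological argument that established Lemmas \ref{left} and \ref{right}, now applied to the divisor $D := B_i+\ldots+B_{j-1}$ on $S$, together with a factorization step coming from multiplication of sections. First I would observe the numerical identity noted just before the statement: after the augmentation, the sum of the consecutive classes from vertex $i$ to vertex $j$ in $\mathcal{TS}^{op}$ equals $\pi^*(B_i+\ldots+B_{j-1})-E$, so the relevant source line bundle on $S$ is $\cO_S(D-E)$ while the ambient one is $\cO_S(D)$. Since $i<k<j$, the index $k$ (carrying the class $E$) is strictly interior to the segment, so $E$ appears with coefficient exactly $-1$ after cancellation; in particular $E\cdot D = 0$ exactly as in the earlier lemmas (the pullback classes are orthogonal to $E$ and the $\pm E$ contributions cancel). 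Twisting the structure sequence $0\to\cO_S(-E)\to\cO_S\to\cO_E\to 0$ by $\cO_S(D)$ and using $E\cdot D=0$ gives $0\to\cO_S(D-E)\to\cO_S(D)\to\cO_E\to 0$, whence the long exact sequence yields the claimed embedding $H^0(S,D-E)\hookrightarrow H^0(S,D)$ with cokernel mapping into $H^0(E,\cO_E)\cong\mathbf{k}$. Using the Riemann--Roch bookkeeping from the Preliminaries section (the strong left-orthogonality of the relevant sums, which holds because $\mathcal{TS}$ is a full strong toric system) one concludes $h^0(S,D)=h^0(S,D-E)+1$, so the image of $H^0(S,D-E)$ has codimension exactly one in $H^0(S,D)$.

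The remaining point is to identify an explicit section completing a basis, and this is where the multiplication-of-sections input enters. Choose $f\in H^0(S,B_i+\ldots+B_{k-1})\setminus H^0(S,B_i+\ldots+B_{k-1}-E)$ as in Lemma \ref{left} (so $\mathrm{div}(f)-E$ is not effective) and $g\in H^0(S,B_{k+1}+\ldots+B_{j-1})\setminus H^0(S,-E+B_{k+1}+\ldots+B_{j-1})$ as in Lemma \ref{right} (so $\mathrm{div}(g)-E$ is not effective). Their product $fg$ lies in $H^0(S,B_i+\ldots+B_{k-1})\otimes H^0(S,B_{k+1}+\ldots+B_{j-1})\to H^0(S,D)$, where I have used $B_i+\ldots+B_{k-1}+B_{k+1}+\ldots+B_{j-1}=D-E\cdot(\text{nothing})$—more precisely the segment $[i,k-1]$ and $[k+1,j-1]$ together with the two $E$-twisted edges sum to $D$ when the middle $E$ is included; the clean way to phrase it is that the divisor classes $B_i+\ldots+B_{k-1}$ and $(B_{k}-E)+B_{k+1}+\ldots+B_{j-1}$ sum to $D$, and $E+(B_k-E)=B_k$, so $fg\in H^0(S,D)$ after also multiplying by the canonical section $e$ of $E$ on the middle edge (equivalently, $\mathrm{div}(fg)$ contains $E$ with the right multiplicity). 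I would then show $fg$ does not lie in the image of $H^0(S,D-E)$: since $\mathrm{div}(f)-E$ and $\mathrm{div}(g)-E$ are both non-effective and the only component of $S$ that $E$ could come from in $\mathrm{div}(fg)$ is split between the two factors, $\mathrm{div}(fg)-E$ is not effective, so $fg\notin\mathrm{image}\big(H^0(S,D-E)\big)$. Combined with the codimension-one count, this proves $H^0(S,D)$ is spanned by the image of $H^0(S,D-E)$ together with $fg$.

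The main obstacle I anticipate is the bookkeeping in the previous paragraph: making the divisor-class arithmetic after the augmentation precise (tracking where the single $E$ sits and why $E\cdot D=0$), and—more substantively—proving that $\mathrm{div}(fg)-E$ is not effective. The latter is not purely formal: a priori $\mathrm{div}(f)$ and $\mathrm{div}(g)$ could each fail to contain $E$ yet their sum could acquire $E$ only if one of them contained $E$, which is excluded by the defining properties of $f$ and $g$; but one must be careful that $E$ is irreducible and reduced (it is a $(-1)$-curve) so that containment of $E$ in a sum forces containment in a summand. I would also need to double-check the edge case $j-1=k+1$ (so that the right-hand segment $H^0(S,B_{k+1}+\ldots+B_{j-1})$ degenerates to $H^0(S,B_{k+1})$ or even to the single-arrow situation treated in Lemma \ref{right}), and the hypothesis $i<k<j-1$ in the statement is presumably there precisely to keep both flanking segments nonempty; the borderline cases $j=k+1$ would then be handled directly by Lemma \ref{left} applied at vertex $k'$. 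Once these combinatorial checks are in place, the cohomology and the span statement follow exactly as above.
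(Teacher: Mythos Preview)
Your overall strategy matches the paper's exactly: the embedding comes from the same twisted structure sequence, the codimension-one count is the same Riemann--Roch computation, and the completing section is the product $fg$. The paper's proof is only a few lines and follows precisely this outline.

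There is, however, a genuine confusion in your divisor bookkeeping, and it stems from a typo in the statement you were given. The section $g$ should lie in $H^0(S,B_k+\ldots+B_{j-1})$, not $H^0(S,B_{k+1}+\ldots+B_{j-1})$ (compare Lemma~\ref{right} and the multiplication map written in the paper's own proof). With the correct index, $(B_i+\ldots+B_{k-1})+(B_k+\ldots+B_{j-1})=D$ on the nose, so $fg\in H^0(S,D)$ directly and there is no need to multiply by the section $e$ of $\cO_S(E)$. Your attempt to patch the missing $B_k$ by inserting $e$ creates an internal contradiction: once you multiply by $e$, the divisor of the resulting section \emph{does} contain $E$, so you cannot then argue that $\mathrm{div}(fg)-E$ is not effective. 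Drop the $e$, fix the index on $g$, and the argument goes through cleanly.

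On the final step, the paper phrases the non-membership of $fg$ in the image of $H^0(S,D-E)$ slightly differently than you do: since $\mathrm{div}(f)-E$ and $\mathrm{div}(g)-E$ are not effective, the sections $f$ and $g$ (viewed as sections on $S_0$ via $\pi_*$) do not vanish at the blown-up point $P$, hence $fg$ does not vanish at $P$, hence $fg\notin H^0(S,D-E)$. Your version---$E$ irreducible, so $E\le \mathrm{div}(f)+\mathrm{div}(g)$ forces $E\le\mathrm{div}(f)$ or $E\le\mathrm{div}(g)$---is equivalent and equally valid. Either way, the span statement then follows from the dimension count.
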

      \begin{proof}
    Again the embedding is obtained as before. Using the formula in preliminary, we see
      \begin{equation*}
          h^0(S,B_i+\ldots+B_{j-1})= h^0(S,B_i+\ldots+B_{j-1}-E)+1
      \end{equation*}
      By Lemma 3.1 and 3.2, $f$ and $g$ as described above exists.
      Consider the product $H^0(S,B_i+\ldots+B_{k-1})\otimes H^0(S,B_{k}+\ldots+B_{j-1})\to H^0(S,B_i+\ldots+B_{j-1})$, then $fg\in H^0(S,B_i+\ldots+B_{j-1})$ with the additional property that $fg$ does not vanish at $P$, hence not in the image of  $H^0(S,B_i+\ldots+ B_{j-1}-E)$. Hence $fg$ and $H^0(S,B_i+\ldots+ B_{j-1}-E)$ span $H^0(S,B_i+\ldots+B_{j-1})$ by looking at the dimension.
      \end{proof}
    This lemma tells us that if $i<k<j$, then all but one arrows from vertex $i$ to vertex $j$ in $Q_0$ corresponds to arrows from vertex $i$ to $j$ in $Q$.
    
    \section{Example: $\mathbb{P}^2$}
    In this section we consider the case when $S=\mathbb{P}^2$. This case is on the one hand a motivating example and on the other hand the base case of induction by augmentation for the next section.\ 
    
    We use $H$ to denote a divisor corresponding to the hyperplane line bundle $\mathcal{O}_{\mathbb{P}^2}(1)$. We have the unique full cyclic strong toric system $\mathcal{TS}=\{H,H,H\}$ and $\mathcal{TS}^{op}=\{H,H,H\}$. The corresponding quiver of sections is
    \[
    \begin{tikzcd}
    1\arrow[r,shift left=0.5ex]
    \arrow[r,shift right=0.5ex]
    \arrow[r]
    & 2 \arrow[r,shift left=0.5ex]
       \arrow[r,shift right=0.5ex]
       \arrow[r,]
    &3
    \\
    \end{tikzcd}
    \]
    We call the three arrows from $0$ to $1$ by $x_1,y_1,z_1$, the three arrows from $1$ to $2$ by $x_2,y_2,z_2$ where $x_i,y_i,z_i$ represents the linear functions $x,y,z$ in $H^0(\mathbb{P}^2,H)$. Then we have relation:
    \begin{align*}
        y_2x_1&=x_2y_1\\
        y_2z_1&=z_2y_1\\
        z_2x_1&=z_1x_2
    \end{align*}
    We denote this quiver with relation by $Q$.\\
    \begin{thm}
      For $m,n\in \Z_{>0}$, $m\neq n$, if $\theta$ has toric form $(m,n)$, i.e. $\theta=(-m,m-n,n)$, then $$T:\mathbb{P}^2\dashrightarrow M_\theta$$ is an isomorphism. 
    \end{thm}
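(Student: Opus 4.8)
\emph{Strategy.} The plan is to show that $T$ is everywhere defined, that it induces a bijection on closed points, and finally that it is an isomorphism of schemes. The starting observation is that $M_\theta$ is a fine moduli space: for $\theta=(-m,m-n,n)$ with $m,n>0$ and $m\neq n$, the six sums $\sum_{i\in P}\theta_i$ over the proper nonempty subsets $P\subset\{1,2,3\}$ are $-m,\ m-n,\ n,\ -n,\ n-m,\ m$, all nonzero, so by Lemma~\ref{criterion} every $\theta$-semistable representation with dimension vector $\mathbf 1$ is $\theta$-stable and $M_\theta$ carries a universal family.

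Next I would check that $T$ is a morphism. For a point $x\in\mathbb{P}^2$ the representation $R_x=\bigoplus(L_i^\vee)_x$ has its three arrows $1\to 2$ (and likewise $2\to 3$) acting by the values at $x$ of the linear forms $x,y,z$, so at least one arrow $1\to 2$ and at least one arrow $2\to 3$ is nonzero. Enumerating the $0/1$ dimension vectors via Lemma~\ref{subrep}, the only proper nonzero subrepresentations of $R_x$ are those with dimension vectors $(0,0,1)$ and $(0,1,1)$, of $\theta$-weights $n>0$ and $(m-n)+n=m>0$ respectively; hence $R_x$ is $\theta$-stable. Since $T$ is defined at any point whose associated representation is $\theta$-semistable, this shows $T\colon\mathbb{P}^2\to M_\theta$ is a morphism.

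Now I would classify all $\theta$-stable representations with dimension vector $\mathbf 1$. Such a representation is recorded by a $2\times 3$ matrix whose rows $(a_1,b_1,c_1),(a_2,b_2,c_2)$ give the values of the arrows $1\to 2$ and $2\to 3$; the three relations $y_2x_1=x_2y_1$, $y_2z_1=z_2y_1$, $z_2x_1=z_1x_2$ say exactly that all $2\times 2$ minors of this matrix vanish, i.e.\ it has rank $\le 1$. If the first row were zero then $(1,0,0)$ would be a subrepresentation of weight $-m<0$, and if the second row were zero then $(1,1,0)$ would be a subrepresentation of weight $-n<0$; hence in the $\theta$-stable case both rows are nonzero, and rank $\le 1$ forces them to be proportional. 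The group $\PGL$ acts by rescaling the two rows independently up to a common scalar, so every $\theta$-stable representation is isomorphic to the one whose two rows both equal a fixed nonzero vector $(a:b:c)$, and this isomorphism class is precisely $T([a:b:c])$. Thus $T$ is a bijection on closed points.

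Finally, to promote this to an isomorphism I would build the inverse directly from the universal representation on $M_\theta$: the three universal arrows $1\to 2$ are global sections of a line bundle on $M_\theta$ with no common zero (no $\theta$-stable representation annihilates all of them, by the previous step), so they define a morphism $\phi\colon M_\theta\to\mathbb{P}^2$, and tracing through the moduli descriptions gives $\phi\circ T=\id$ and $T\circ\phi=\id$. Alternatively, over a field of characteristic $0$ a bijective morphism between smooth varieties is an isomorphism, once one checks $M_\theta$ is smooth. The point requiring care is exactly this last step: everything through the bijection on points is a short explicit computation, but upgrading it to a scheme isomorphism needs either the honest construction of $\phi$ via the tautological line bundles or an a priori normality/smoothness input for $M_\theta$.
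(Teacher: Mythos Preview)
Your argument is correct and complete up to the final step, which you rightly flag as needing an a priori normality input or the explicit inverse $\phi$. Note that the paper takes a different route at precisely this point: rather than classifying stable representations and inverting $T$, the paper computes the graded ring of $\theta$-semi-invariants directly. It observes that $B(k\theta)$ has monomial basis $\{x_1^{p_1}y_1^{q_1}z_1^{r_1}x_2^{p_2}y_2^{q_2}z_2^{r_2}\}$ with $p_1+q_1+r_1=km$, $p_2+q_2+r_2=kn$, and that the quiver relations make the assignment $x_1^{p_1}y_1^{q_1}z_1^{r_1}x_2^{p_2}y_2^{q_2}z_2^{r_2}\mapsto x^{p_1+p_2}y^{q_1+q_2}z^{r_1+r_2}$ a well-defined graded ring isomorphism $\bigoplus_k B(k\theta)\to \bigoplus_k H^0(\mathbb{P}^2,k(m+n)H)$; taking $\Proj$ yields the isomorphism at once.

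The trade-off is that the paper's approach avoids any appeal to smoothness or normality of $M_\theta$ and bypasses the universal family entirely, at the cost of a short but somewhat ad hoc monomial computation. Your approach is more geometric and arguably more transparent about \emph{why} the moduli space is $\mathbb{P}^2$ (the rank-one $2\times 3$ matrix picture is very clean), and your construction of $\phi$ via the universal arrows $1\to 2$ is the natural way to close the argument; since $\phi\circ T=\id_{\mathbb{P}^2}$ holds on the nose, $T$ is a closed immersion, and surjectivity on closed points together with reducedness of $M_\theta$ (which the paper records as a general fact about these GIT quotients) then finishes. Either route is fine here.
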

    
    \begin{proof}
     Take $s=[a:b:c]\in\mathbb{P}^2$, then $a,b,c$ cannot be $0$ simultaneously. Consider the isomorphism class $\cO(-H)_s\oplus \cO(-2H)_s\oplus \cO(-3H)_s$ of representations of $Q$. By change of bases for each of the three summands, we can find $R\in \cO(-H)_s\oplus \cO(-2H)_s\oplus \cO(-3H)_s$ so that
      \begin{align*}
          &r_{x_1}=r_{x_2}=a\\
          &r_{y_1}=r_{y_2}=b\\
          &r_{z_1}=r_{z_2}=c
      \end{align*}
    By Lemma \ref{subrep}, then only possible nontrivial proper subrepresentations  of $R$ has dimension vector $(0,1,1)$ or $(0,0,1)$, in both cases, if we use $S$ to denote the subrepresentation, we can see $\theta(S)>0$ using the fact that $m,n>0$. Thus $T$ is a morphism, i.e it is defined on all of $\mathbb{P}^2$.\\
    Now consider the natural map 
    \begin{align*}
        f_k:B(k\theta)\to H^0(\mathbb{P}^2,k(mH+nH))
    \end{align*}
    for $k\geq 0$. Note $B(k\theta)$ has a basis $\{x_1^{p_1}y_1^{q_1}z_1^{r_1}x_2^{p_2}y_2^{q_2}z_2^{r_2}\}_{p_1+q_1+r_1=km,p_2+q_2+r_2=kn}$ and this map maps $x_1^{p_1}y_1^{q_1}z_1^{r_1}x_2^{p_2}y_2^{q_2}z_2^{r_2}$ to $x^{p_1+p_2}y^{q_1+q_2}z^{r_1+r_2}$. Using the relations, one easily sees $f_n$ is an isomorphism for each $k\geq 0$ and $(f_k)_{k\geq0}$ gives a isomorphism of graded rings
    \begin{align*}
        f:\bigoplus_{k\geq0}B(k\theta)\to \bigoplus_{k\geq0}H^0(\mathbb{P}^2,k(mH+nH))
    \end{align*}
    Since the tautological map $T$ is obtained from $f$ by taking $\Proj$ of both sides, we see $T$ is an isomorphism.
    \end{proof}
    \begin{rem}
      It is clear that for $m,n\in\Z_{>0}$, $m\neq n$ and $\theta=(-m,m-n,n)$, $\theta$ is a weight that satisfies the condition in Lemma \ref{criterion}.
    \end{rem}

    \section{Choice of weight}
    The proof of Main Theorem is by induction. We adapt the notations at the beginning of Section 3. We assume furthermore that both $S,S_0$ are del Pezzo surfaces and the toric systems $\mathcal{TS}^{op}$ (hence $\mathcal{TS}^{op}_0$ too) satisfies the assumption $(1.4)$. We show that if there exist weight $\theta_0$ such that $T_0:S_0\dashrightarrow M_{\theta_0}$ is an isomorphism, then we can find a weight $\theta$ so that $T:S\dashrightarrow M_\theta$ is an isomorphism.\
    \begin{rem}\label{op}
      We observe that under the assumption (\ref{technical}), if
    \begin{align*}
        \mathcal{TS}_0^{op}&=\{B_1,B_2,\ldots,B_{n-1},B_n\}\\
        &=\{A_{n-1},A_{n-2},\ldots,A_2,A_1,A_n\}
    \end{align*}
    then 
    \begin{align*}
        \mathcal{TS}^{op}&=\{\pi^*(B_1),\ldots,\pi^*(B_{k-1})-E,E,\pi^*(B_{k})-E,\ldots,\pi^*(B_n)\}\\
        &=\{\pi^*(A_{n-1}),\ldots,\pi^*(A_{n-k+1})-E,E,\pi^*(A_{n-k})-E,\ldots,\pi^*(A_1),\pi^*(A_n)\}
    \end{align*}
    is a full strong toric system obtained by standard augmentation from $\mathcal{TS}^{op}$ at position $k$.
    \end{rem}

    We will always use weights that are admissible. Suppose $\theta_0$ is an admissible weight, let its toric form be $(b_1,b_2,\ldots,b_{n})$. We define $\theta$ in its toric form as $$(2b_1,2b_2,\ldots,2b_{k-1},(2b_{k-1}+2b_{k}-1),2b_{k},\ldots,2b_{n-1})$$, i.e. the exceptional divisor $E$ in the toric system $\mathcal{TS}^{op}$ is given weight $2b_{k-1}+2b_{k}-1$.
    Note if $k=1$, then the toric form of $\theta$ is $$(2b_1-1,2b_1,2b_2,\ldots,2b_{n-1})$$. If $k=n$, the toric form of $\theta$ is $$(2b_1,2b_2,\ldots,2b_{n-1},2b_{n-1}-1)$$. One can think of these two cases as a natural extension of the general case by thinking $b_k=0$ if $k<1$ or $k>n$. It is clear that $\theta$ defined in this way is also admissible.\\
      
    The motivation for such a choice is that we will choose in a fashion that $\sum_{i=1}^{n-1}b_iB_i$ is a very ample divisor on $S_0$, and our new weight will correspond to divisor $2\sum_{i=1}^{n-1}b_i\pi^*(B_i)-E$, which can be easily verified to be very ample on $S$ using Nakai-Moishezon criterion. We believe that any weight whose toric form gives a very ample divisor should cut out a moduli space isomorphic to $S$, although in this paper we only manage to prove some very special choices.\\
    The following proposition along with Lemma \ref{criterion} shows some nice properties our choice of $\theta$ enjoys:
    \begin{prop}\label{weight}
      With the above assumption,
      let $\theta_0$ whose toric form is $(b_1,\ldots,b_{n-1})$ be a character satisfying the conditions in Lemma \ref{criterion} on $S_0$, then $\theta$ defined as above, whose toric form is $$(2b_1,2b_2,\ldots,2b_{k-1},(2b_{k-1}+2b_{k}-1),2b_{k},\ldots,2b_{n-1})$$ also satisfies the conditions in Lemma \ref{criterion}
    \end{prop}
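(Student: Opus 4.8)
To prove the proposition I must verify, for the quiver $Q$ of the augmented collection, that $\sum_{p\in P}\theta_p\neq 0$ for every proper nonempty subset $P$ of $Q_{vx}$. The plan is to exploit a parity phenomenon built into the construction of $\theta$: after doubling all of the old toric coordinates, the single newly inserted coordinate — the weight $2b_{k-1}+2b_k-1$ attached to the exceptional divisor $E$, which sits between the vertices $k$ and $k'$ of $Q$ — is \emph{odd}, while every other entry of the toric form of $\theta$ is \emph{even}. First I would record the purely formal fact that, writing a weight's toric coordinates as $c_1,\ldots,c_{N-1}$ with the conventions $c_0=c_N=0$, one has $\theta_p=c_{p-1}-c_p$, so that if $P$ decomposes into its maximal intervals $P=\bigsqcup_{s=1}^{r}[j_s,l_s]$ then the weight sum telescopes:
\[
  \sum_{p\in P}\theta_p=\sum_{s=1}^{r}\bigl(c_{j_s-1}-c_{l_s}\bigr).
\]
In our situation $c_k$ is the only odd coordinate.

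Next I would split into cases according to how $P$ meets the pair $\{k,k'\}$. Among the endpoint indices $j_1-1,l_1,\ldots,j_r-1,l_r$ occurring above, the value $k$ appears as some $l_s$ precisely when $k\in P$ and $k'\notin P$, and as some $j_s-1$ precisely when $k'\in P$ and $k\notin P$; these possibilities are mutually exclusive, and $k$ can never appear twice. Hence if $P$ contains exactly one of $k,k'$, the telescoping sum has exactly one odd summand, so $\sum_{p\in P}\theta_p$ is odd — in particular nonzero — and this case is complete.

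If instead $P$ contains both of $k,k'$ or neither of them, then $c_k$ never appears among the $c_{j_s-1},c_{l_s}$, so every summand is even. Using the explicit values $c_i=2b_i$ for $i<k$ and $c_i=2b_{i-1}$ for $i>k$, together with the bijection between the vertices of $Q$ other than $k'$ and the vertices of $Q_0$ furnished by the augmentation, I would identify $\sum_{p\in P}\theta_p=2\sum_{i\in P_0}(\theta_0)_i$, where $P_0\subseteq (Q_0)_{vx}$ is obtained from $P$ by deleting $k'$ (if present) and relabelling. The set $P_0$ is again proper — it omits the vertex $k$ when $P$ omits both of $k,k'$, and otherwise it omits the relabelling of any vertex $\neq k'$ left out by $P$ — and it is nonempty, so the hypothesis that $\theta_0$ satisfies Lemma~\ref{criterion} gives $\sum_{i\in P_0}(\theta_0)_i\neq 0$, whence $\sum_{p\in P}\theta_p\neq 0$.

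The only genuine work lies in this last case, and it is bookkeeping rather than a real difficulty: one must check that deleting the vertex $k'$ carries the maximal intervals of $P$ to the maximal intervals of $P_0$ — in particular that no two intervals of $P$ fuse or become adjacent, which is exactly where the assumption that $P$ contains both or neither of $k,k'$ enters — so that the two telescoping sums agree summand by summand. The extremal positions $k=1$ and $k=n$ need no separate treatment once one adopts the convention $b_0=b_n=0$, under which the displayed formula for the toric form of $\theta$ is uniform; and the conclusion is then precisely the hypothesis of Lemma~\ref{criterion} for $\theta$ on $S$.
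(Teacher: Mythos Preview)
Your proposal is correct and follows essentially the same strategy as the paper: split according to whether the subset meets the two ``new'' vertices $k,k'$ in exactly one, both, or neither, use parity in the first case, and reduce to the hypothesis on $\theta_0$ in the remaining cases. The only difference is packaging: you work with the toric coordinates $c_i$ and a telescoping formula over maximal intervals, whereas the paper simply writes out the weight entries $\theta_i$ directly, observes that all but the two entries $1-2b_k$ and $2b_{k-1}-1$ are twice an entry of $\theta_0$, and that these two odd entries sum to $2(b_{k-1}-b_k)$; this makes the interval bookkeeping you describe unnecessary, though your version is of course equally valid.
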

    \begin{proof}
      We use combinatorics to verify that $\theta$ satisfy the condition in the above lemma. We prove the proposition for $1<k<n$, the cases when $k=1$ or $k=n$ are handled exactly the same.
      Note $$\theta_0=(-b_1,b_1-b_2,\ldots,b_{n-2}-b_{n-1}, b_{n-1})$$ and $$\theta=\big(-2b_1,2(b_1-b_2),\ldots,2(b_{k-2}-b_{k-1}),1-2b_{k},2b_{k-1}-1,2(b_{k}-b_{k+1}),\ldots,2(b_{n-2}-b_{n-1}),2b_{n-1}\big)$$ With the assumption on $\theta$, any subset of entries of $\theta$ not involving $1-2b_{k}$ and $2b_{k-1}-1$ cannot sum up to 0. If the subset only contains on of the above two entries, the sum cannot be $0$ due to parity. If the subset contains both entries, since $1-2b_{k}+2b_{k-1}-1=2(b_{k-1}-b_{k})$, the sum cannot be $0$ again due to the assumption on $\theta_0$. Thus $\theta$ satisfies the assumption of Lemma \ref{criterion}.
    \end{proof}
    \begin{rem}
      This proposition implies both $M_\theta$ and $M_{\theta_0}$ are fine moduli space of stable representations.
    \end{rem}
    \begin{rem}
      For the rest of this paper, when $\theta$ and $\theta_0$ show up together, we always assume $\theta$ is obtained from $\theta_0$ from the above procedure.
    \end{rem}

    \section{Construction of morphism between moduli spaces}We start by constructing a morphism between representation schemes, i.e $F:\mathrm{Rep(Q)}\to \mathrm{Rep(Q_0)}$. Since both $\mathrm{Rep(Q)}$ and $\mathrm{Rep(Q_0)}$ are affine schemes, it suffice to construct a $\mathbf{k}$-algebra homomorphism between their coordinate rings. We notice for each $a\in Q_{0,ar}$ from $i$ to $j$, it corresponds to an element $s$ in $\Hom_{\cO_{S_0}}(L_i,L_j)$, it is natural to consider mapping it to the element in $\mathbf{k}[b\in Q_{ar}]/J$ which corresponds to  $\pi^*s\in\Hom_{\cO_S}(\pi^*L_i,\pi^*L_j)$. The nontrivial part lies in the fact that such an element is not always of the form $b$ for $b\in Q_{ar}$.
      \begin{thm}\label{map}
      There is a natural morphism
      \begin{equation*}
          F:\mathrm{Rep(Q)}\to \mathrm{Rep(Q_0)}
      \end{equation*}
      Moreover, $F$ is surjective.
      \end{thm}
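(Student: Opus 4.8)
The plan is to build $F$ at the level of path algebras: since $\mathrm{Rep}(Q)$ and $\mathrm{Rep}(Q_0)$ are affine, a morphism $F$ is the same as a $\mathbf{k}$-algebra map on coordinate rings, i.e.\ (after passing to the identified description via the last Proposition of \S2.5) a $\mathbf{k}$-algebra homomorphism
$$\Phi\colon \mathcal{A}_0^{\op}=\mathbf{k}Q_0/I_{\mathcal{L}_0}\longrightarrow \mathcal{A}^{\op}=\mathbf{k}Q/I_{\mathcal{L}}$$
respecting the vertex idempotents (with the vertex $k$ of $Q_0$ sent to the sum $e_k+e_{k'}$, since the augmentation splits that vertex, and all other vertices matched by the bijection from \S3). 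On each ``hom-space'' wedge $e_j(\mathbf{k}Q_0/I_{\mathcal{L}_0})e_i\cong H^0(S_0,B_i+\ldots+B_{j-1})$ the map is forced: it must be the pullback
$$\pi^*\colon H^0(S_0,B_i+\ldots+B_{j-1})\longrightarrow H^0(S,\pi^*(B_i+\ldots+B_{j-1})),$$
which is an isomorphism onto its image because $\pi_*\cO_S=\cO_{S_0}$ and the projection formula give $H^0(S_0,D)\cong H^0(S,\pi^*D)$ (equation (3.3) in the excerpt). The content of the theorem is that these pullback maps are induced by an actual algebra homomorphism on the path algebras, and that it is surjective.

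First I would \emph{construct $\Phi$ on arrows}. For an arrow $a\colon i\to j$ of $Q_0$ with $i,j$ both $\le k-1$ or both $\ge k+1$, the bijection of \S3 (coming from (3.3)) gives a distinguished arrow of $Q$ representing the same section, and $\Phi(a)$ is that arrow. For an arrow $a\colon i\to k$ with $i<k$: by Lemma~\ref{left}, $H^0(S,B_i+\ldots+B_{k-1})$ is spanned by $e$ composed with the arrows $i\to k$, together with the one extra generator $u_i$ of Definition~\ref{arrow1}; so $\pi^*$(section of $a$) is a $\mathbf{k}$-linear combination of $e\cdot(\text{arrows }i\to k')$ and $u_i$, and we set $\Phi(a)$ equal to that combination (for the arrow $w_i$ we get exactly $u_i$, for the others we get $e$ times the matching arrow). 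Symmetrically for arrows $k\to j$ with $j>k$, using Lemma~\ref{right} and $u_j$, $w_j$. Arrows of $Q_0$ are either not incident to $k$ (covered by the first case) or incident to $k$ as source or target (covered by the last two). This defines a $\mathbf{k}$-algebra homomorphism $\tilde\Phi\colon \mathbf{k}Q_0\to \mathbf{k}Q/I_{\mathcal{L}}$ on the free path algebra by multiplicativity.

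Second, \emph{check $\tilde\Phi(I_{\mathcal{L}_0})=0$}, so that $\tilde\Phi$ descends to $\Phi$ on $\mathbf{k}Q_0/I_{\mathcal{L}_0}=\mathcal{A}_0^{\op}$. A relation in $I_{\mathcal{L}_0}$ is a $\mathbf{k}$-linear combination of paths $i\to j$ that represents $0$ in $H^0(S_0,B_i+\ldots+B_{j-1})$; applying $\tilde\Phi$ and using that $\Phi$ was defined precisely to realize the section attached to each arrow, the image of such a combination represents $\pi^*(0)=0$ in $H^0(S,\pi^*(B_i+\ldots+B_{j-1}))$, hence lies in $I_{\mathcal{L}}$ — here one uses that products of arrows correspond to products of sections and that $\pi^*$ is a ring map on section algebras. (The genuinely bookkeeping-heavy case is when $i<k<j$, a ``crossing'' path: Lemma~\ref{cross arrow} says $H^0(S,B_i+\ldots+B_{j-1})$ is spanned by $\pi^*$ of the $E$-free part together with $u_i\cdot u_j$-type products, and one must see that $\Phi$ applied to a crossing path lands consistently in this description. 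This is the step I expect to be the main obstacle: tracking the extra generators $u_i,u_j$ through concatenation and confirming no new relations are needed beyond $I_{\mathcal L}$.)

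Third, \emph{surjectivity of $F$}, equivalently injectivity of $\Phi$... — no: surjectivity of $F\colon\mathrm{Rep}(Q)\to\mathrm{Rep}(Q_0)$ on schemes corresponds to $\Phi$ being \emph{injective} is also not what is needed directly; rather I would argue geometrically. A point of $\mathrm{Rep}(Q_0)$ is a representation $R_0$ of $Q_0$ with $\dim=\mathbf 1$; I exhibit a representation $R$ of $Q$ with $\dim=\mathbf 1$ mapping to it under $F$. Put the value of $e$ equal to $1$ (nonzero); then the arrow $e\colon k\to k'$ is invertible, so specifying $R$ amounts to specifying the values of all arrows of $Q$ other than $e$, and the relations of $Q$ involving $e$ just propagate the values across the identification of vertices $k$ and $k'$. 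Concretely: read off from $R_0$ the values on arrows not incident to $k$; for an arrow $i\to k'$ of $Q$ that is $e$ times an arrow $i\to k$ of $Q_0$, assign the corresponding value of $R_0$ (legitimate since $r_e=1$), and assign $u_i$ the value dictated by expressing $w_i$'s section in terms of the basis; symmetrically for $k\to j$ and for crossing arrows $i\to j$. One then checks $R$ satisfies the relations $I_{\mathcal L}$ because $R_0$ satisfies $I_{\mathcal L_0}$ and $\pi^*$ is injective on global sections, and that $F(R)=R_0$ by construction. Since every closed point of $\mathrm{Rep}(Q_0)$ is hit, and both schemes are of finite type over $\mathbf k$, $F$ is surjective.
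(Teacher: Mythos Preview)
There is a genuine conceptual gap. You identify the coordinate ring of $\mathrm{Rep}(Q)$ with the path algebra $\mathcal{A}^{\op}=\mathbf{k}Q/I_{\mathcal L}$, citing the last Proposition of \S2.5, but that proposition says $\mathbf{k}Q/I_{\mathcal L}\cong\mathcal{A}^{\op}$, not that this equals the coordinate ring of the representation scheme. The coordinate ring of $\mathrm{Rep}(Q)$ with dimension vector $\mathbf 1$ is the \emph{commutative} polynomial ring $\mathbf{k}[a:a\in Q_{ar}]/J$; the path algebra is not commutative and has completely different structure. This matters concretely in two ways. First, if $\Phi(e_k^{Q_0})=e_k+e_{k'}$ then restricting a $\mathbf 1$-dimensional $\mathcal{A}^{\op}$-module along $\Phi$ produces a module with $(e_k+e_{k'})M\cong\mathbf{k}^2$, so you land in $\mathrm{Rep}_{(1,\ldots,1,2,1,\ldots,1)}(Q_0)$, not $\mathrm{Rep}_{\mathbf 1}(Q_0)$. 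Second, your $\Phi$ is not multiplicative on paths through $k$: you send $a:i\to k$ to a path ending at $k'$ and $a':k\to j$ to a path starting at $k$, so $\Phi(a')\Phi(a)=0$ in $\mathbf{k}Q/I_{\mathcal L}$ even though $a'a\neq 0$ in $\mathbf{k}Q_0/I_{\mathcal L_0}$. You also do not define $\Phi$ on arrows $a:i\to j$ of $Q_0$ with $i<k<j$ (``crossing'' \emph{arrows}, not just paths); your first case covers only $i,j$ both ${\le}k-1$ or both ${\ge}k+1$, yet such crossing arrows do exist (they are the ``type~iii'' arrows in the paper's Lemma~\ref{key}).

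The paper avoids all of this by working directly with the commutative coordinate rings: it defines $\phi:\mathbf{k}[a\in Q_{0,ar}]/J_0\to\mathbf{k}[b\in Q_{ar}]/J$ on generators, and the crucial point is that in a commutative polynomial ring a product like $u_{s(w)}\cdot u_{t(w)}$ makes perfect sense even though $u_{s(w)}:s(w)\to k'$ and $u_{t(w)}:k\to t(w)$ do not compose as paths. The crossing arrows are handled explicitly via Lemma~\ref{cross arrow}, and well-definedness (sending $J_0$ into $J$) requires a separate argument involving division by $e$ in the codimension-one subspace $H^0(S,B_i+\ldots+B_{j-1}-E)\subset H^0(S,B_i+\ldots+B_{j-1})$. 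Your surjectivity argument (set $r_e=1$ and read off the remaining values from $R_0$) is essentially the paper's and is fine once $F$ is correctly constructed.
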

      \begin{proof}
      We define a $\mathbf{k}$-algebra homomorphism
      $\phi:\mathbf{k}[a\in Q_{0,ar}]/J_0\to \mathbf{k}[b\in Q_{ar}]/J$ as follows:
      For $a\in Q_{0,a}$, if $t(a)<k$ or $s(a)>k$, by the computation at the beginning of Section 3, we see there is a unique arrow $b$ in $Q$ corresponding to $a$, define $\phi(a)=b$.\
    
      If $t(a)=k$, then there is a unique arrow $b$ in $Q$ with $t(b)=k'$ corresponding to $a$, define $\phi(a)=b$. The case when $s(a)=k$ is handled similarly.\
    
      If $s(a)<k$ and $t(a)>k$, by Lemma \ref{cross arrow}, all but one such arrows represent sections whose divisor of zero contains $E$ as a component, and corresponds to a unique arrow $b$ in $Q$, define $\phi(a)=eb$. For the unique arrow $w$ that represents a section $s$ such that $div(s)-E$ is not effective, there is not arrow in $Q$ corresponding to it. By the proof of Lemma 3.4, we see the natural choice is to set $\phi(u)=u_{s(w)}u_{t(w)}$.
    
      It remains to check $\phi$ is well-defined, i.e elements in $J_0$ gets mapped into $J$ by $\phi$. It suffices to show we can choose a collection of generators of $I_0$, such that when we consider the generators as elements in $\mathbf{k}[a\in Q_{0,a}]$,their image under $\phi$ lies in $J$. The natural choice of generators consists of $\sum_{i=1}^nk_ip_i$ where $k_i\in \mathbf{k}^*$ and $p_i$'s are pairwise different paths that share the same source $i$ and target $j$ such so that $\sum_{i=1}^nk_ip_i$ corresponds to $0$ in $H^0(S_0,B_{i+1}+\ldots+B_j)$. By our choice above, it is clear that if either $i\geq k$ or $j\leq k$, $\phi(\sum_{i=1}^nk_ip_i)=0$. If $i<k<j$, then the unique arrow cannot show up in $\sum_{i=1}^nk_ip_i$ with nonzero coefficient, as it represents the unique dimension in $H^0(S_0,B_{i+1}+\ldots+B_j)$ that does not lie in the span of the rest of the arrows. Thus $\phi(\sum_{i=1}^nk_ip_i)$ is a multiple of $e$. By our construction $\phi(\sum_{i=1}^nk_ip_i)$ corresponds to $0$ in $H^0(X,B_{i+1}+\ldots+B_j)$ (Caution: This does not imply $\phi(\sum_{i=1}^nk_ip_i)\in J!$). The fact that $\phi(\sum_{i=1}^nk_ip_i)$ is a multiple of $e$ implies it is in the image of the inclusion $H^0(X,B_{i+1}+\ldots+B_j-E)\hookrightarrow H^0(X,B_{i+1}+\ldots+B_j)$, so $\phi(\sum_{i=1}^nk_ip_i)/e$ corresponds to $0$ in $H^0(X,B_{i+1}+\ldots+B_j-E)$, hence is a generator of $J$. So $\phi(\sum_{i=1}^nk_ip_i)=(\phi(\sum_{i=1}^nk_ip_i)/e)e\in J$.\\
      We now show $F$ is surjective. Given $R_0\in\mathrm{Rep(Q_0)}$, we set $r_e=1$ and see from above there exist a unique choice for the values of other arrows of $R$ if $F(R)=R_0$ . The fact that these values comes from a representations follows from the fact that $R_0$ is.
    \end{proof}
    
      The next proposition shows $F$ respects the $\mathrm{PGL(\mathbf{1})}$- action.
    
      \begin{prop}\label{descent}
      Let $R_1$,$R_2$ be two representations of $Q$ with dimension vector $\mathbf{1}$. Suppose $R_1\sim R_2$, via the element $(g_1,\ldots,g_{k-1},g_k,g_{k'},g_{k+1},\ldots,g_n)$, then $F(R_1)\sim F(R_2)$.
      \end{prop}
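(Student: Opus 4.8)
The plan is to show directly that the $\mathrm{PGL}(\mathbf 1)$-action on $\mathrm{Rep}(Q)$ descends through $F$ to the $\mathrm{PGL}(\mathbf 1)$-action on $\mathrm{Rep}(Q_0)$; that is, I would exhibit an explicit group homomorphism $\psi\colon \mathrm{PGL}(\mathbf 1_Q)\to \mathrm{PGL}(\mathbf 1_{Q_0})$ such that $F(g\cdot R)=\psi(g)\cdot F(R)$ for all $g$ and all $R$. Since $\mathrm{PGL}(\mathbf 1)=(\mathbf k^*)^{Q_{vx}}/\mathbf k^*_{\mathrm{diag}}$, it suffices to work with a lift $g=(g_1,\dots,g_{k-1},g_k,g_{k'},g_{k+1},\dots,g_n)\in(\mathbf k^*)^{Q_{vx}}$ and define $\psi$ on the level of tori by sending this tuple to $(g_1,\dots,g_{k-1},g_k,g_{k+1},\dots,g_n)\in(\mathbf k^*)^{Q_{0,vx}}$ — i.e.\ simply forget the coordinate $g_{k'}$, keeping all the others and using the identification of vertices of $Q$ and $Q_0$ away from the augmentation site. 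One then checks this passes to the quotient by the diagonal, which is immediate.

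The core of the argument is the verification $F(g\cdot R)=\psi(g)\cdot F(R)$, which by the construction of $F$ in Theorem \ref{map} (i.e.\ of the algebra map $\phi$ on generators) reduces to a case check over the arrows $a\in Q_{0,ar}$. Recall $(\psi(g)\cdot R_0)_a = g_{t(a)}\, (r_0)_a\, g_{s(a)}^{-1}$ with the convention that indices $k'$ never appear in $Q_0$. For an arrow $a$ with $t(a)<k$ or $s(a)>k$, $\phi(a)=b$ for a single arrow $b$ of $Q$ with the same source and target indices, so both sides scale by $g_{t(a)}g_{s(a)}^{-1}$ and agree. For $a$ with $t(a)=k$ (resp.\ $s(a)=k$), $\phi(a)=b$ where $b$ has target $k'$ (resp.\ source $k'$); here the key point is that $F$ evaluates $(r_0)_a$ from the arrow $b$ touching vertex $k'$, and the $g_{k'}$-scaling on $R$ is exactly what $\psi$ discards, so one must see that the value $F(R)_a$ is unchanged: indeed $F(R)_a = r_b$ computed from $R$, and since $\psi(g)_k$ (resp.\ the appropriate other endpoint's scalar) still acts on $a$, the two sides again match after cancelling $g_{k'}$ against itself in $r_e$ (see below). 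The genuinely interesting case is $s(a)<k<t(a)$: if $a$ is one of the arrows with $\phi(a)=eb$, then $F(R)_a$ is recovered as the product $r_e r_b$ (or a normalization thereof), and under $g$ this product scales as $(g_{k'}g_k^{-1})(g_{t(b)}g_{k'}^{-1}) = g_{t(b)}g_k^{-1} = g_{t(a)}g_{s(a)}^{-1}$, so the $g_{k'}$ cancels and we get exactly $\psi(g)$-scaling. Similarly for the unique exceptional arrow $w$ with $\phi(w)=u_{s(w)}u_{t(w)}$, the composite passes through vertices $k'$ and $k$ in a way that makes the $g_{k'}$ and one $g_k$ factors cancel, leaving $g_{t(w)}g_{s(w)}^{-1}$.

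I would organize the write-up as: (1) define $\psi$ and note it descends to $\mathrm{PGL}(\mathbf 1)$; (2) state that it suffices to check the intertwining relation on the coordinate functions, i.e.\ on arrows of $Q_0$, using that $F$ is defined by $\phi$ and that reading off $F(R)$ amounts to evaluating $\phi(a)$ as a path in $R$; (3) run the four cases above. A small preliminary point worth isolating first: the definition of $F$ on a general $R\in\mathrm{Rep}(Q)$ (as opposed to the section in the surjectivity argument, where one sets $r_e=1$) is ``evaluate $\phi(a)$ as a path'', so $F(R)_a = r_{\phi(a)}$; with this clean description all four cases are one-line scaling computations. The main obstacle — really the only subtle point — is bookkeeping the $g_{k'}$ factor correctly in the two ``crossing'' cases (arrows from $i<k$ to $j>k$), making sure that the composite paths $eb$ and $u_{s(w)}u_{t(w)}$ indeed route through vertex $k'$ exactly once in a direction where its scalar cancels; once that cancellation is written out, $\psi$ is forced to be precisely ``forget $g_{k'}$'', which is consistent across all arrows and yields the claim. (As a corollary this $\psi$ will reappear in the next step as the map inducing $f\colon M_\theta\to M_{\theta_0}$ on GIT quotients, so I would phrase the statement so that $\psi$ is available by name afterward.)
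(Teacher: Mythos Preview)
Your proposed map $\psi$ (``forget $g_{k'}$'') is wrong, and the case analysis breaks down already at the first nontrivial case. Take an arrow $a\colon i\to k$ in $Q_0$ with $i<k$. By the construction of $F$ in Theorem~\ref{map}, $\phi(a)=b$ where $b\colon i\to k'$ in $Q$, so $F(R)_a=r_b$. Under $g$ we have $(g\cdot R)_b=g_{k'}\,r_b\,g_i^{-1}$, hence $F(g\cdot R)_a=g_{k'}g_i^{-1}\,r_b$. But your $\psi(g)$ acts on this coordinate by $g_k g_i^{-1}$, which is not the same unless $g_k=g_{k'}$. Your sentence about ``cancelling $g_{k'}$ against itself in $r_e$'' does not apply here: $\phi(a)$ is a single arrow landing at $k'$, and no factor of $e$ is present. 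The crossing case $i<k<j$ is also miscounted: for $\phi(a)=e\cdot b$ (a product in the coordinate ring, with $e\colon k\to k'$ and $b\colon i\to j$), the scaling on $r_e r_b$ is $(g_{k'}g_k^{-1})(g_j g_i^{-1})$, so a stray factor $g_{k'}g_k^{-1}$ survives; your computation assumes $b$ has source $k'$, which it does not.

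The paper's proof simply writes down the correct element of $(\mathbf{k}^*)^{Q_{0,vx}}$, namely
\[
(g_1 g_k,\; g_2 g_k,\;\ldots,\; g_{k-1} g_k,\; g_k g_{k'},\; g_{k+1} g_{k'},\;\ldots,\; g_n g_{k'}),
\]
and leaves the (now genuinely routine) case check to the reader. If you redo your four cases with this element in place of ``forget $g_{k'}$'', every scaling matches on the nose; for instance in the $t(a)=k$ case you get $(g_k g_{k'})(g_i g_k)^{-1}=g_{k'}g_i^{-1}$, and in the crossing case $(g_j g_{k'})(g_i g_k)^{-1}=g_j g_{k'} g_i^{-1} g_k^{-1}$, exactly the factor picked up by $r_e r_b$ (or by $r_{u_i}r_{u_j}$). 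Your overall strategy---define a torus map and verify intertwining arrow by arrow---is fine; only the formula for $\psi$ needs to be replaced.
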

      \begin{proof}
      From the construction of $F$, one directly check the element
      \begin{equation*}
          (g_1g_k, g_2g_k, \ldots, g_{k-1}g_k, g_kg_{k'}, g_{k+1}g_{k'}, \ldots, g_ng_{k'})
      \end{equation*}
      provides the equivalence.
      \end{proof}
    
    We now consider the interaction between $F$ and stability conditions. We first make an easy but important remark.
    \begin{rem}
      The $\mathrm{PGL(\mathbf{1})}$-action is compatible with stability, i.e. if $R_1\sim R_2$ and $R_1$ is $\theta$-semistable, then so is $R_2$.
    \end{rem}
    We let $U=\mathrm{Rep(Q)}-\mathbf{V}(e)$, this is the open subset of $\mathrm{Rep(Q)}$ consisting of representations where the value of the arrow $e$ is not $0$.
    \begin{prop}\label{on U}
      Suppose $F(R)=R'$, $R\in U$, R is $\theta$-stable, then $R'$ is $\theta_0$-stable.
    \end{prop}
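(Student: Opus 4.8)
The plan is to verify the stability inequality for $R'=F(R)$ directly. We let $S'\subset R'$ be an arbitrary proper nonzero subrepresentation; we must show $\theta_0(S')>0$. By Lemma \ref{subrep}, $S'$ is determined by its dimension vector $\vec{d}'$, a vector of $0$'s and $1$'s indexed by the vertices of $Q_0$, with the property that $r'_a=0$ for every arrow $a$ of $Q_0$ with $d'_{s(a)}=1$ and $d'_{t(a)}=0$. Since the vertices of $Q$ are $1,\dots,k-1,k,k',k+1,\dots,n$, we define a dimension vector $\vec{d}$ for $Q$ by $d_v=d'_v$ for $v\ne k,k'$ and $d_k=d_{k'}=d'_k$; that is, $\vec{d}$ is $\vec{d}'$ with the entry at the augmentation vertex duplicated. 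We will show that (i) $\vec{d}$ is the dimension vector of a subrepresentation $S\subset R$, (ii) $\theta(S)=2\,\theta_0(S')$, and (iii) $S$ is proper and nonzero. Granting these, the $\theta$-stability of $R$ forces $2\,\theta_0(S')=\theta(S)>0$, which is the desired inequality.

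Claim (ii) is immediate from the definition of $\theta$ in terms of $\theta_0$: one has $\theta_v=2\,\theta_{0,v}$ for every vertex $v\ne k,k'$, while the two vertices $k,k'$ produced by the augmentation satisfy $\theta_k+\theta_{k'}=2\,\theta_{0,k}$; therefore
\begin{equation*}
\theta(S)=\sum_{v\ne k,k'}d'_v\,(2\theta_{0,v})+d'_k\,(\theta_k+\theta_{k'})=2\sum_{v}d'_v\,\theta_{0,v}=2\,\theta_0(S').
\end{equation*}
Claim (iii) is clear: $S'\ne 0$ gives some $d'_v=1$, so $S\ne 0$; and $S'$ proper gives some $d'_v=0$, so either $d_v=0$ for some $v\ne k,k'$ or $d_k=d_{k'}=0$, and in either case $\vec{d}\ne\mathbf{1}$.

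Claim (i) is the heart of the argument. By Lemma \ref{subrep} we must check that $r_a=0$ for every arrow $a$ of $Q$ with $d_{s(a)}=1$ and $d_{t(a)}=0$. The arrow $e\colon k\to k'$ never violates this, precisely because $d_k=d_{k'}$. Every other arrow $a$ of $Q$ is linked, through the constructions of Section 3 --- the embeddings of Lemmas \ref{left}, \ref{right} and \ref{cross arrow}, the distinguished arrows of Definition \ref{arrow1}, and the homomorphism $\phi$ from the proof of Theorem \ref{map} --- to a single arrow $\bar a$ of $Q_0$: arrows of $Q$ avoiding both $k$ and $k'$ correspond to arrows of $Q_0$ via the bijection of Section 3 (and then $r_a=r'_{\bar a}$); the distinguished arrows $u_i$ and $u_j$ correspond to $w_i$ and $w_j$ (and $r_{u_i}=r'_{w_i}$, $r_{u_j}=r'_{w_j}$); and each remaining arrow of $Q$ --- those from $i$ to $k$ with $i<k$, from $k'$ to $j$ with $j>k$, and from $i$ to $j$ with $i<k<j$ --- occurs as the ``$e$-divided'' factor of a unique $\bar a$, so that $r'_{\bar a}=r_e\,r_a$. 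In every case the source and target of $\bar a$ carry the same entries of $\vec{d}'$ as the source and target of $a$ carry in $\vec{d}$, so $d_{s(a)}=1$ and $d_{t(a)}=0$ force $d'_{s(\bar a)}=1$ and $d'_{t(\bar a)}=0$; hence $r'_{\bar a}=0$ because $S'$ is a subrepresentation of $R'$, and since $R\in U$ means $r_e\ne 0$ we conclude $r_a=0$. This is exactly where the hypothesis $R\in U$ is used. We expect this case-by-case bookkeeping --- sorting the arrows of $Q$ into those that are plain $\phi$-images of arrows of $Q_0$, those that are $e$-composites, and the two exceptional ``product'' arrows --- to be the only genuine work; the rest is formal.
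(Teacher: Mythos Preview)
Your proof is correct and follows essentially the same approach as the paper: both lift an arbitrary subrepresentation $S'\subset R'$ to a subrepresentation $S\subset R$ by duplicating the entry at the augmented vertex ($d_k=d_{k'}=d'_k$), verify via the arrow-by-arrow correspondence of Section~3 and Theorem~\ref{map} that this $\vec d$ defines a genuine subrepresentation (using $r_e\neq 0$ to cancel the factor of $e$ where needed), and then conclude from $\theta(S)=2\theta_0(S')$. One small inaccuracy: your phrase ``arrows of $Q$ avoiding both $k$ and $k'$ correspond to arrows of $Q_0$ via the bijection of Section~3'' literally includes arrows $i\to j$ with $i<k<j$, for which no such bijection is asserted; you correctly handle those separately a few lines later, so the slip is only in the wording, not the logic.
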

    \begin{proof}
      Note if $$\theta_0=(-b_1,b_1-b_2,\ldots,b_{n-2}-b_{n-1}, b_{n-1})$$ as in Proposition \ref{weight} then
      \begin{equation*}
          \theta=\big(-2b_1,2(b_1-b_2),\ldots,2(b_{k-2}-b_{k-1}),1-2b_{k},2b_{k-1}-1,2(b_{k}-b_{k+1}),\ldots,2(b_{n-2}-b_{n-1}),2b_{n-1}\big)
    \end{equation*}
    We mention that $(1-2b_{k})+(2b_{k-1}-1)=2(b_{k-1}-b_{k})$. \
    
    Given $S'\subset R'$, $\vec{d'}$ be the dimension vector of $S'$. We claim that we can find a subrepresentation $S$ of $R$
    whose dimension vector $\vec{d}$ is as follows:
    \begin{enumerate}
        \item If $i<k$, $d_i=d'_i$. 
        \item If $j>k$, $d_j=d'_j$.
        \item $d_k=d_{k'}=d'_k$
    \end{enumerate}
    We now verify $\vec{d}$ indeed gives a subrepresentation of $R$ using Lemma \ref{subrep}. Let $a$ be an arrow from $i$ to $j$ such that $d_i=1$ and $d_j=0$, we want to show $r_a=0$.\ 
    
    If $j<k$, then $d'_i=1$, $d'_j=0$ and by construction of $F$, we see $r'_a=r_a=0$.\
    
    If $j=k$, then $d'_i=1$, $d'_k=0$. Then there is an arrow $a'$ from $i$ to $k$ in $Q_0$ such that $r'_{a'}=r_ar_e$ by the construction of $F$. Since $R\in U$, $r_e\neq 0$. But $r'_{a'}=0$, so we must have $r_a=0$.\ 
    
    If $j=k'$, then $d'_i=1$, $d'_k=0$, there is an arrow $a'$ in $Q_0$ such that $r_a=r'_{a'}=0$.\ 
    
    If $j>k$, and $i=k,k'$ or $i>k$, the above arguments applies. When $i<k$, then there is an arrow $a'$ in $Q_0$ such that $0=r'_{a'}=r_ar_e$ again as $r_e\neq 0$, we get $r_a=0$.\ 
    
    So a subrepresentation $S$ of $R$ with dimension vector $\vec{d}$ exists. It is clear by our choice of dimension vector 
    \begin{align*}
        \theta(S)=2\theta_0(S')
    \end{align*}
    Since $R$ is $\theta$-stable, $\theta(S)>0$, thus $\theta_0(S')>0$. Apply this argument for all $S'\subset R'$, we see $R'=F(R)$ is $\theta_0$-stable.
      \end{proof}
    
    The next proposition shows $F$ is injective.
    \begin{prop}\label{injective}
      Suppose $R_1,R_2\in U$, and $F(R_1)\sim F(R_2)$ under the action of $(g_1,\ldots,g_n)$, then $R_1\sim R_2$.
    \end{prop}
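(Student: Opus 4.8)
The plan is to show that the equivalence $(g_1,\ldots,g_n)$ between $F(R_1)$ and $F(R_2)$ on $Q_0$ can be ``lifted'' to an equivalence $(h_1,\ldots,h_{k-1},h_k,h_{k'},h_{k+1},\ldots,h_n)$ between $R_1$ and $R_2$ on $Q$. First I would set up notation: write $R_1=(r^{(1)}_b)$, $R_2=(r^{(2)}_b)$ for the arrow-values, and recall from the construction of $F$ in Theorem \ref{map} that the values of $F(R_t)$ are explicit monomials in the $r^{(t)}_b$: for arrows with $t(a)<k$ or $s(a)>k$ the value of the corresponding arrow in $Q_0$ equals the value of a single arrow in $Q$; for arrows hitting or leaving vertex $k$ they equal the value of a single arrow into $k'$ or out of $k$; for the genuinely ``crossing'' arrows $i<k<j$ they equal either $r_e\cdot r_b$ (composition through $e$) or $r_{u_i}r_{u_j}$. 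Since $R_1,R_2\in U$ we have $r^{(1)}_e\neq 0$ and $r^{(2)}_e\neq 0$, which is what makes the lift possible.

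The key steps, in order. (i) Define $h_i := g_i$ for $i<k$ and $h_j := g_j$ for $j>k$; these are forced, because for arrows not touching the augmentation site the value of the $Q_0$-arrow is literally the value of the single corresponding $Q$-arrow, so $r^{(2)}_b = g_{t}(g_{s})^{-1} r^{(1)}_b$ already reads off from $F(R_1)\sim F(R_2)$. (ii) Use the arrow $e$ to fix $h_k$ and $h_{k'}$: the relation we must impose is $r^{(2)}_e = h_{k'} h_k^{-1} r^{(1)}_e$, which is one equation in two unknowns; combine it with the compatibility coming from an arrow $a$ with $t(a)=k$ (so $h_k g_{s(a)}^{-1}$ must send $r^{(1)}_{b(a)}$ to $r^{(2)}_{b(a)}$, giving $h_k$ in terms of the already-chosen $g$'s once such an arrow exists --- and one does, since $1$ is the unique source and the quiver is connected, so vertex $k$ receives an arrow from some $i<k$). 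Then $h_{k'}$ is determined. (iii) Check consistency on \emph{every} remaining arrow of $Q$: arrows into $k'$ (these correspond to arrows into $k$ in $Q_0$, or are $e$, or are $u_i$), arrows out of $k$, and the crossing arrows $i<k<j$. For the composition-through-$e$ crossing arrows the identity $r^{(2)}_e r^{(2)}_b = g_{j}g_i^{-1}\, r^{(1)}_e r^{(1)}_b$ together with the already-established $r^{(2)}_e = h_{k'}h_k^{-1} r^{(1)}_e$ forces $r^{(2)}_b = h_j h_i^{-1} r^{(1)}_b$ after cancelling $r_e\neq0$; for the special arrows $u_i$ (resp. $u_j$) one uses that $F$ sends the unique crossing arrow $w$ to $u_i u_j$, so $r^{(2)}_{u_i}r^{(2)}_{u_j} = g_{t(w)}g_{s(w)}^{-1} r^{(1)}_{u_i}r^{(1)}_{u_j}$, and splitting this across the pivot vertices $k,k'$ (using $h_k,h_{k'}$ chosen above) distributes correctly. (iv) Conclude $R_1\sim R_2$.

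The step I expect to be the main obstacle is (iii) for the arrows $u_i$ and $u_j$ of Definition \ref{arrow1}: these are the arrows whose images under $\phi$ are \emph{not} single arrows but products $u_iu_j$ through the new vertex $k'$, so the naive componentwise matching does not immediately apply, and one must verify that the single scalar identity $r^{(2)}_{u_i}r^{(2)}_{u_j}=g_{t(w)}g^{-1}_{s(w)}r^{(1)}_{u_i}r^{(1)}_{u_j}$ is compatible with a \emph{factored} rescaling by $h_k$ at the intermediate vertex. The resolution is that $u_i$ is an arrow from $i<k$ into $k'$ and $u_j$ is an arrow from $k$ out to $j>k$, so the product passes through both $k$ and $k'$; once $h_k,h_{k'}$ are pinned down in step (ii), the scalar $h_{k'}h_k^{-1}$ appears as a factor exactly as for $e$, and the remaining factor $h_j h_i^{-1}$ matches by the same cancellation argument. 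A small amount of care is also needed when $k=1$ or $k=n$, but as in Proposition \ref{weight} and Proposition \ref{on U} these boundary cases go through verbatim with the convention that the out-of-range $b_k$ is zero; I would remark on this rather than repeat the argument. Once injectivity on $U$ is established, combined with Proposition \ref{on U} and surjectivity from Theorem \ref{map}, one gets that $F$ descends to a bijection on the relevant stable loci, which is the input needed for the later identification of $f:M_\theta\to M_{\theta_0}$ with the blow-up.
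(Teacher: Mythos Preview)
Your overall strategy---lifting the equivalence $(g_1,\ldots,g_n)$ to an element $(h_1,\ldots,h_k,h_{k'},\ldots,h_n)$---is correct, but step (i) contains a genuine error that breaks the argument. Setting $h_i=g_i$ for $i<k$ \emph{and} $h_j=g_j$ for $j>k$ is over-constrained: only the \emph{ratios} $h_j/h_i=g_j/g_i$ are forced within each side, and the relative scaling between the left block $\{1,\ldots,k-1\}$ and the right block $\{k+1,\ldots,n\}$ must absorb the factor $e_2/e_1:=r_e^{(2)}/r_e^{(1)}$. Concretely, the arrows $u_i$ (from $i<k$ to $k'$) satisfy $r^{(2)}_{u_i}=g_kg_i^{-1}r^{(1)}_{u_i}$ because $\phi(w_i)=u_i$, forcing $h_{k'}=g_k$ under your choice; the arrows $u_j$ (from $k$ to $j>k$) likewise satisfy $r^{(2)}_{u_j}=g_jg_k^{-1}r^{(1)}_{u_j}$, forcing $h_k=g_k$; but then the constraint $r_e^{(2)}=h_{k'}h_k^{-1}r_e^{(1)}$ from the arrow $e$ reads $e_2=e_1$, which fails in general. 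Your step (iii) also misreads the role of $u_i,u_j$: they are \emph{separately} the images of $w_i,w_j$ under $\phi$, not merely factors of the crossing arrow; and as paths, $u_i$ ends at $k'$ while $u_j$ starts at $k$, so ``$u_iu_j$'' is not a composable path in $Q$.

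The paper's proof avoids all of this by simply writing down the correct lift:
\[
(h_1,\ldots,h_{k-1},h_k,h_{k'},h_{k+1},\ldots,h_n)=(g_1e_2,\ldots,g_{k-1}e_2,\ g_ke_1,\ g_ke_2,\ g_{k+1}e_1,\ldots,g_ne_1),
\]
where $e_t=r_e^{(t)}$, and then checks directly (using the explicit description of $\phi$ from Theorem~\ref{map} and $e_1e_2\neq0$) that this element transforms $R_1$ into $R_2$ on every arrow of $Q$. The key feature is the asymmetric scaling by $e_2$ on the left and $e_1$ on the right, which is exactly what your step (i) misses. Your approach can be repaired by keeping $h_i=g_i$ for $i<k$ but allowing $h_j=c\,g_j$ for $j>k$ with $c$ to be determined; working through the constraints you find $c=e_1/e_2$, recovering (up to the global scaling by $e_2$) the paper's formula.
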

    \begin{proof}
      Let $e_i$ denote the value of $e$ in $R_i$ for $i=1,2$, then $e_1e_2\neq 0$. Again by the construction of $F$, one directly checks that 
      \begin{equation*}
          \Big(g_1e_2,g_2e_2,\ldots,g_{k-1}e_2,g_ke_1,g_ke_2,g_{k+1}e_1,\ldots,g_ne_1\Big)
      \end{equation*}
      provides the equivalence.
    \end{proof}
    
    Any full strong exceptional toric system coming from standard augmentation from $\mathbb{P}^2$ satisfying (\ref{technical}) is of the form $\{\ldots,H-\Delta,\ldots,H-\Delta',\ldots,H-\Delta''\}$ where $\Delta,\Delta',\Delta''$ are (possibly empty) sum of at most $3$ exceptional divisors, and the terms in $\ldots$ are single exceptional divisor $E_1$ or $E_1-E_2$. Note all divisors in the above toric system are required to be slo except the last one. We will use these restrictions when we exhaust all possible forms of toric systems for $\mathcal{TS}^{op}$ (see Remark \ref{op}).
    
     The next proposition provides structural properties of representations with $r_e=0$.
     
    \begin{prop}\label{structure}
      Suppose $R\in \mathbf{V}(e)$ and $R$ is $\theta$-stable, then for all $i<k$
      \begin{align*}
          r_{u_i}\neq 0
      \end{align*}
      Also for all $j>k$,
      \begin{align*}
          r_{u_j}\neq 0.
      \end{align*}
    \end{prop}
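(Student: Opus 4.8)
The plan is to derive a contradiction from the assumption that some $r_{u_i} = 0$ with $i < k$ (the case $j > k$ being symmetric by Lemma~\ref{right}, so I will only discuss the first case). The key is to build an explicit subrepresentation $S \subset R$ of negative weight, using the fact that $r_e = 0$ and $r_{u_i} = 0$ simultaneously kill \emph{all} arrows into the vertex $k'$ that originate from vertices $\le i$. Recall from the remark after Lemma~\ref{left} and from Definition~\ref{arrow1} that the arrows from a vertex $\ell$ (with $\ell < k$) to $k'$ come in two flavors: those factoring as (an arrow $\ell \to k$ in $Q$) composed with $e$, and the single distinguished arrow $u_\ell$. If $r_e = 0$, every arrow of the first flavor vanishes; so the only possibly-nonzero arrow $\ell \to k'$ is $r_{u_\ell}$.

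First I would set up the candidate dimension vector $\vec d$: put $d_{k'} = 0$, put $d_m = 1$ for all $m \le i$, and put $d_m = 0$ for all $m$ with $i < m$, $m \ne k'$ — in other words $S$ is supported exactly on the initial segment $\{1,\dots,i\}$. Then I would verify via Lemma~\ref{subrep} that this is indeed a subrepresentation: the only arrows with source in $\{1,\dots,i\}$ and target outside are (a) arrows $\ell \to m$ with $\ell \le i < m \le k-1$ or $m \ge k+1$, (b) arrows $\ell \to k$, (c) arrows $\ell \to k'$. For (c), vanishing holds because $r_e = 0$ kills the composite arrows and $r_{u_\ell} = 0$ by hypothesis (here one needs $r_{u_\ell} = 0$ for \emph{all} $\ell \le i$, not just $\ell = i$; but this follows because $u_\ell$ factors through $u_i$ up to the chosen sections — more carefully, one uses that $\text{div}(u_i)$-type sections multiply, so I may instead need to run the argument only at the minimal such $\ell$, or invoke Lemma~\ref{cross arrow}; this is the delicate bookkeeping point). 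Cases (a) and (b) are \emph{not} automatic — arrows $\ell \to k$ or $\ell \to m$ need not vanish — so the naive choice above does not quite work.

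The fix, which I expect to be the real content, is to instead take $\vec d$ supported on $\{1,\dots,i\} \cup (\text{everything} \ge k)$ but with $d_{k'} = 0$ and $d_k = 1$; that is, $S$ omits exactly the "middle block" $\{i+1,\dots,k-1\}$ and the vertex $k'$. Then arrows out of the support landing outside it are: arrows from $\{1,\dots,i\}$ into $\{i+1,\dots,k-1\}$, arrows from $\{1,\dots,i\}$ into $k'$, and arrows from $k$ into $k'$ (namely $e$ itself). The arrow $e$ vanishes since $R \in \mathbf V(e)$; the arrows into $k'$ from $\le i$ vanish as explained above; but arrows from $\le i$ into the middle block still obstruct. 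So one is forced to also drop the segment differently, or — and this is what I think actually happens — one chooses $\vec d$ with $d_m = 0$ precisely for $m \in \{i+1,\dots,k'\}$ \emph{and} uses that in a \emph{strong} exceptional toric system satisfying (\ref{technical}), the relevant $\Hom$ spaces between consecutive "middle" vertices and vertex $k$ force the obstructing arrows to be expressible through $e$ and the $u$'s. Concretely, I would invoke the structure recalled just before this proposition: the toric system for $\mathcal{TS}^{op}$ has the form $\{\dots, H-\Delta, \dots\}$ with single-exceptional-divisor entries in between, so the arrows from the middle block are controlled, and Lemma~\ref{cross arrow} says any arrow $\ell \to m$ with $\ell < k < m$ factors through $e$ up to the one distinguished arrow $u_\ell u_m$. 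Hence dropping $k'$ and setting $r_e = 0$, $r_{u_i} = 0$ really does kill all escaping arrows, and I get a subrepresentation $S$.

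Finally I would compute $\theta(S)$. With $S$ supported on $\{1,\dots,i\}$ (the successful variant), $\theta(S) = \sum_{m=1}^{i}\theta_m = -b_{i}$-type expression from the toric form — recalling $\theta$ has toric form $(2b_1,\dots,2b_{k-1}, 2b_{k-1}+2b_k-1, 2b_k,\dots)$, the partial sum $\theta_1 + \dots + \theta_i$ equals the $i$-th entry of the negative toric form, which by admissibility is strictly negative. This contradicts $\theta$-stability of $R$, which requires $\theta(S) > 0$ for every proper nonzero subrepresentation. Therefore $r_{u_i} \ne 0$, and symmetrically $r_{u_j} \ne 0$ for $j > k$, completing the proof. \emph{Main obstacle:} the bookkeeping in identifying exactly which arrows escape the chosen support and confirming they all vanish — i.e. pinning down the right dimension vector so that Lemma~\ref{subrep}'s hypothesis is met using only $r_e = 0$ and $r_{u_i} = 0$; the weight computation itself is then immediate from admissibility of the toric form.
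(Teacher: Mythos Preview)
Your overall strategy---assume some $r_{u_i}=0$, build a destabilizing subrepresentation, and contradict $\theta$-stability---is exactly what the paper does. However, your execution has a genuine gap: none of the candidate dimension vectors you propose works in general, and in particular your final choice fails already in the paper's own illustrative example.

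Take $S=Bl_p\mathbb P^2$ with the quiver on vertices $1,2,2',3$ (so $k=2$) and $\theta=(-2a,1-2b,2a-1,2b)$. When $r_{u_1}=0$ (here $i=1<k$), your choice ``support on $\{1,\dots,i\}=\{1\}$'' would require the two arrows $1\to 2$ (sections of $H^0(H-E)$) to vanish, which is in no way implied by $r_e=r_{u_1}=0$. The paper instead uses dimension vector $(1,1,0,1)$, i.e.\ \emph{omit only $k'$}; this works because the only arrows landing in $k'$ are $e$ and $u_1$, and it gives weight $1-2a<0$. For $r_{u_3}=0$ (the case $j>k$) the paper takes support $\{k\}=\{2\}$ alone, giving weight $1-2b<0$. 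Neither matches your scheme. Your second candidate (omit $k'$ and the middle block $\{i+1,\dots,k-1\}$) happens to coincide with the paper's choice in this example only because the middle block is empty; as you yourself note, once the middle block is nonempty it is obstructed by arrows from $\{1,\dots,i\}$ into it. Your attempt to rescue the argument by saying ``$u_\ell$ factors through $u_i$'' is also incorrect: $u_\ell$ and $u_i$ both have target $k'$ and do not compose.

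The underlying point is that the hypotheses $r_e=0$ and $r_{u_i}=0$ for a \emph{single} $i$ do not kill enough arrows to yield a uniform destabilizing subrepresentation; one must exploit the specific combinatorics of the toric system (which entries are of the form $H-\Delta$, which are exceptional divisors or chains thereof) to see exactly which arrows exist and how they factor. The paper explicitly says the proof ``uses case by case study and is a bit tedious'' and defers the details to the extended version \cite{QZ}. To complete the argument you would have to run through the finite list of shapes of $\mathcal{TS}^{op}$ (as is done later in Lemma~\ref{key}) and in each case identify the correct destabilizing dimension vector by hand. Your weight computation at the end is fine once a valid subrepresentation is in hand; the gap is entirely in producing one.
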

    \begin{rem}
      The strategy for the proof is to show that if a bad arrow violating the statement exists, one can construct a subrepresentation $S\subset R$ such that $\theta(S)< 0$, contradicting the fact that $R$ is stable. The proof itself uses case by case study and is a bit tedious. We direct the reader to the generalized version of this paper \cite{QZ} for it.
    \end{rem}
    
    We illustrate the idea of proof of Proposition \ref{structure} by an example.
    \begin{exmp}
        Let $S=Bl_p\mathbb{P}^2$ with $E$ the exceptional divisor. Consider the exceptional collection $\{\cO_S,\cO_S(H-E),\cO_S(H),\cO_S(2H-E)\}$. The quiver of sections looks like
        \[
\begin{tikzcd}[arrow style=tikz,>=stealth,row sep=5em]
1 
   \arrow[dr,shift right=.3ex]
   \arrow[dr,shift left=.3ex]
   \arrow{drrr}{u_1}
&&&&3
 \\         
&2 \arrow{urrr}{u_3}
    \arrow[rr]
&&2' \arrow[ur, shift left=.3ex]
    \arrow[ur,shift right=.3ex]
\end{tikzcd}
\]
By our choice of weight, $\theta=(-2a,1-2b,2a-1,2b)$ for $a,b>0$ (that is it has toric form $(2a,2a+2b-1,2b)$). Suppose $R\in \mathbf{V}(e)$ and if $r_{u_1}=0$, then we have a subrepresentation $R'$ with dimension vector $(1,1,0,1)$ and 
\begin{align*}
    \theta(S')=-2a+(1-2b)+2b=1-2a<0
\end{align*}
while if $r_{u_3}=0$, we have $R'$ with dimension vector $(0,1,0,0)$ and
\begin{align*}
    \theta(S')=1-2b<0
\end{align*}
So in both cases $S$ is unstable.
    \end{exmp}
      The following proposition generalizes Proposition \ref{on U}. 
      \begin{prop}\label{compatible}
      Suppose $F(R)=R'$, $R$ is $\theta$-stable, then $R'$ is $\theta_0$-stable.
      \end{prop}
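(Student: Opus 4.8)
The plan is to reduce Proposition~\ref{compatible} to the two cases already treated: $R\in U$ (Proposition~\ref{on U}) and $R\in \mathbf{V}(e)$ (the structural Proposition~\ref{structure}). Since $\mathrm{Rep(Q)} = U \sqcup \mathbf{V}(e)$, every $\theta$-stable $R$ falls into exactly one of these. If $R\in U$, we are done immediately by Proposition~\ref{on U}. So the real content is the case $R\in\mathbf{V}(e)$, i.e.\ $r_e=0$, and here $F(R)=R'$ is the representation of $Q_0$ in which the arrow $w_i$ (for $i<k$) gets the value $r_{u_i}$, the arrow $w_j$ (for $j>k$) gets the value $r_{u_j}$, the "crossing" arrows $w$ from $i<k$ to $j>k$ get $r_{u_i}r_{u_j}$ (from $\phi(w)=u_{s(w)}u_{t(w)}$), and all other arrows are inherited directly.

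The key step is to take an arbitrary subrepresentation $S'\subset R'$ with dimension vector $\vec d\,'$ and produce a subrepresentation $S\subset R$ with a prescribed dimension vector $\vec d$ satisfying $\theta(S)\le 2\theta_0(S')$, so that $\theta_0(S')>0$ follows from $\theta(S)>0$. The natural guess is $d_i=d'_i$ for $i<k$, $d_j=d'_j$ for $j>k$, and $d_k=d_{k'}=d'_k$; then by the identity $(1-2b_k)+(2b_{k-1}-1)=2(b_{k-1}-b_k)$ one gets exactly $\theta(S)=2\theta_0(S')$ as in the proof of Proposition~\ref{on U}. The subtlety absent from that earlier proof is that now $r_e=0$, so I must re-verify via Lemma~\ref{subrep} that this $\vec d$ really defines a subrepresentation of $R$: I need $r_a=0$ for every arrow $a$ with $d_{s(a)}=1$, $d_{t(a)}=0$. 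For arrows not touching $k$ or $k'$ this is inherited from $S'\subset R'$ exactly as before. For an arrow into $k'$ from some $i<k$ with $d_i=1$, $d_{k'}=d'_k=0$: if it is a multiple of $e$ it vanishes since $r_e=0$; if it is $u_i$, I invoke Proposition~\ref{structure}, which says $r_{u_i}\ne 0$ when $R\in\mathbf{V}(e)$ is $\theta$-stable — hence in fact $d_{k'}=0$ forces me to check consistency, so I should instead choose the dimension vector more carefully, or argue that the only obstruction is handled precisely by Proposition~\ref{structure}. Likewise for arrows out of $k$ toward $j>k$. The crossing arrows $w$ with $d_i=1$, $d_j=0$ have $r_{w}=r_{u_i}r_{u_j}$ on the $R'$ side, and on the $R$ side the corresponding configuration decomposes through $k,k'$, so vanishing propagates correctly.

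The main obstacle I anticipate is precisely this compatibility check when $r_e=0$: unlike the $R\in U$ case, one cannot "cancel $r_e$" to transfer the vanishing of arrows from $R'$ back to $R$, so one must instead exploit that Proposition~\ref{structure} guarantees the special arrows $u_i$, $u_j$ are nonzero, which pins down the dimension vector $\vec d$ at vertices $k$ and $k'$ and rules out the problematic arrow configurations. Concretely: if $d_k=d_{k'}=1$ there is no constraint; if $d_k=d_{k'}=0$, then an arrow $u_i$ with $i<k$, $d_i=1$ into $k'$ would have $r_{u_i}\ne 0$, which would contradict $S$ being a subrepresentation — so this case forces $d_i=0$ for all such $i$, but then on the $R'$ side $d'_k=0$ with $w_i$ nonzero forces $d'_i=0$ too, consistently; the genuinely delicate point is the mixed case and the crossing arrows, where one must track how a single section of $B_i+\cdots+B_{j-1}-E$ versus the distinguished section $f g$ (Lemma~\ref{cross arrow}) interacts. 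Once the subrepresentation $S$ is constructed, the inequality $\theta(S)=2\theta_0(S')>0$ is immediate, and since $S'$ was arbitrary we conclude $R'=F(R)$ is $\theta_0$-stable, completing the proof.
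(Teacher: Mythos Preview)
Your proposal is essentially correct and follows the same strategy as the paper: split into $R\in U$ (handled by Proposition~\ref{on U}) and $R\in\mathbf V(e)$, and in the latter case take the same dimension vector $\vec d$ (with $d_k=d_{k'}=d'_k$) and verify via Lemma~\ref{subrep} that it defines a subrepresentation $S\subset R$ with $\theta(S)=2\theta_0(S')$.

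Your exposition wobbles at the crucial step, though. You write that if $d_{k'}=0$ and $d_i=1$ then $r_{u_i}\ne 0$ ``would contradict $S$ being a subrepresentation --- so this case forces $d_i=0$'', and then separately observe that on the $R'$ side $d'_i=0$ is forced too. This reads as though you are adjusting $\vec d$; but $\vec d$ is already fixed by $\vec d\,'$. The clean argument (which the paper gives) is a contradiction run entirely on the $R'$ side: if $d_i=1$ and $d_{k'}=0$, then $d'_i=1$ and $d'_k=0$; by Proposition~\ref{structure} $r_{u_i}\ne 0$, hence $r'_{w_i}\ne 0$ in $R'$; but $w_i$ goes from $i$ to $k$ in $Q_0$, contradicting $S'\subset R'$. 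So this configuration never occurs, and there is nothing to ``choose more carefully''. The same contradiction (using the crossing arrow in $Q_0$ with value $r_{u_i}r_{u_j}\ne 0$) handles $d_i=1$, $d_j=0$ with $i<k<j$, and the symmetric argument handles $i\in\{k,k'\}$, $d_j=0$. There is no ``mixed case'' to worry about, since by construction $d_k=d_{k'}$. Once you streamline the logic this way, your proof is the paper's proof.
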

      \begin{proof}
      If $R\in U$, the result follows from Proposition \ref{on U}. Suppose $R\in \mathbf{V}(e)$, we will use the same recipe of the proof of the Proposition \ref{on U}. Let $S'\subset R'$, $\vec{d'}$ be the dimension vector of $S'$, we claim there is a subrepresentation $S$ of $R$ whose dimension vector $\vec{d}$ is specifies as follows:
      \begin{enumerate}
        \item If $i<k$, $d_i=d'_i$. 
        \item If $j>k$, $d_j=d'_j$.
        \item $d_k=d_{k'}=d'_k$
    \end{enumerate}
    To prove the claim, we check the conditions in Lemma \ref{subrep}. Let $a$ be an arrow from $i$ to $j$ such that $d_i=1$ and $d_j=0$. If $j<k$, then $d'_i=1$, $d'_j=0$ and by construction of $F$, if $a'$ is the corresponding arrow in $Q_0$ to $a$, we see $r'_{a'}=r_a=0$.\
    
    If $j=k$ or $j=k'$, then $d'_k=0$, $d'_i=1$. By Proposition \ref{structure}, $r_{u_i}\neq 0$. Thus in $R'$, $r_{w_i}\neq 0$ ($w_i$ is the corresponding arrow in $Q_0$ defined in Definition \ref{arrow1}), this leads to a contradiction. So if $d_k=0$ or $d_{k'}=0$, we must have $d_i=0$ for all $i<k$.\ 
    
    If $j>k$, then if in addition $i>k$, then $d'_i=1$ and $d'_j=0$. So there is an arrow $a'$ in $Q_0$ such that $r_a=r'_{a'}=0$ by construction of $F$. If $i<k$, then $d'_i=1$ and $d'_j=0$. By Lemma \ref{cross arrow}, there is an arrow in $Q_0$ from $i$ to $j$ whose value is $r_{u_i}r_{u_j}\neq 0$, contradiction. Similarly, if $i=k'$ or $i=k$, then $d'_k=1$ and $d'_j=0$, we reach a contradiction following the argument in the previous paragraph.\ 
    
    So a subrepresentation $S$ of $R$ with dimension vector exists. Now it is clear that 
    \begin{align*}
        \theta(S)=2\theta_0(S')
    \end{align*}
    Since $R$ is $\theta$-stable, $\theta(S)>0$, thus $\theta_0(S')>0$. Apply this argument for all $S'\subset R'$, we see $R'=F(R)$ is $\theta_0$-stable.
    \end{proof}

    Let $C$ denote the closed subscheme of  $M_\theta$ containing  orbits of stable representations with $r_e=0$, i.e. 
    \begin{align*}
        C=\mathbf{V}(e)^{S}//\mathrm{PGL(\mathbf{1})}
    \end{align*}
    where $\mathbf{V}(e)^{S}$ is the open subscheme of $\mathbf{V}(e)$ consisting of stable representations.
    \begin{thm}\label{to a point}
      $$F(\mathbf{V}(e)^S)\in T_0(P)$$
      In other words, the image of a stable representation in $\mathbf{V}(e)$ under $F$ lies in the isomorphism class $T_0(P)\in M_{\theta_0}$.
    \end{thm}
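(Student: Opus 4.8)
The plan is to show that every stable representation $R$ with $r_e=0$ maps under $F$ to a representation in the single isomorphism class $T_0(P)$, where $P\in S_0$ is the center of the blow-up. First I would pin down what $T_0(P)$ looks like as a quiver representation of $Q_0$: it is the representation $\bigoplus_i (L_i^\vee)_P$ whose arrows record the evaluation at $P$ of the corresponding sections, i.e. the arrow attached to a section $s\in H^0(S_0,B_{i+1}+\ldots+B_j)$ takes the value $s(P)$ in a suitable trivialization. So the goal becomes: for $R\in\mathbf{V}(e)^S$, the representation $F(R)$ is equivalent (under $\mathrm{PGL}(\mathbf 1)$) to the ``evaluation at $P$'' representation of $Q_0$.

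The key input is Proposition~\ref{structure}: since $R$ is $\theta$-stable and $r_e=0$, we have $r_{u_i}\neq 0$ for all $i<k$ and $r_{u_j}\neq 0$ for all $j>k$. Recall from the construction of $F$ in Theorem~\ref{map} that, because $r_e=0$, an arrow $a$ in $Q_0$ with $s(a)<k<t(a)$ that is ``divisible by $E$'' maps to $eb$ and hence has value $0$ in $F(R)$; the unique non-$E$-divisible crossing arrow $w$ (with $s(w)=i<k<j=t(w)$) maps to $u_iu_j$ and hence has value $r_{u_i}r_{u_j}\neq 0$ in $F(R)$. Meanwhile all arrows of $Q_0$ that stay on one side of $k$ map to the corresponding arrows of $Q$ and keep their values. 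So the combinatorial pattern of which arrows of $F(R)$ vanish is exactly: a crossing section vanishes in $F(R)$ iff its divisor of zeros contains $E$, iff (since these sections come from $S_0$, pulled back to $S$) the corresponding section on $S_0$ vanishes at $P$. This matches the vanishing pattern of $T_0(P)$ exactly. I would make this precise by noting that for a section $s\in H^0(S_0,B_{i+1}+\ldots+B_j)$ (a strong-left-orthogonal, hence globally generated enough, divisor), $\pi^*s$ is divisible by $E$ in $H^0(S,\pi^*(B_{i+1}+\ldots+B_j))=H^0(S,\pi^*(\cdots)-E)\oplus\langle u\rangle$ precisely when $s(P)=0$.

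Next I would upgrade this ``same vanishing pattern'' statement to an actual isomorphism of representations. Since the dimension vector is $\mathbf 1$, a representation of $Q_0$ up to $\mathrm{PGL}(\mathbf 1)$ is determined by the ratios of its arrow values along paths, equivalently by the induced ring homomorphism $\mathbf{k}Q_0/I_0=\mathcal A_0^{\op}\to\mathbf{k}$; and this homomorphism factors through evaluation at a point of $S_0$ exactly when it is the one given by $T_0(x_0)$ for some $x_0$. Concretely: after rescaling (using $r_{u_i},r_{u_j}\neq 0$ and the fact that $Q_0$ has $1$ as its unique source), normalize the values of $F(R)$ on the arrows out of the source and use the relations $I_0$ to propagate; the resulting representation is forced, by the relations coming from $H^0(S_0,H)$ and $H^0(S_0,H-E_i)$ (Lemmas~\ref{surj23},~\ref{surj22}, Lemma~\ref{bpf1}) and the shape $\{\ldots,H-\Delta,\ldots\}$ of the toric system, to be the evaluation representation at the point $P$ — because $P$ is exactly the point where all the sections $H-E_1,\ldots,H-E_k$ (the ones producing the extra $E$ in the augmentation) simultaneously vanish, together with the base-point structure. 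So $F(R)\cong T_0(P)$, and since $F$ respects the $\mathrm{PGL}(\mathbf 1)$-action (Proposition~\ref{descent}) this descends to the claimed equality $F(\mathbf{V}(e)^S)\in T_0(P)$ in $M_{\theta_0}$.

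\emph{Main obstacle.} The delicate step is the last one: going from ``$F(R)$ has the same zero-pattern of arrows as $T_0(P)$'' to ``$F(R)$ is isomorphic to $T_0(P)$''. In principle a representation of $Q_0$ with dimension vector $\mathbf 1$ and a prescribed support of non-zero arrows need not be the evaluation at a point — one must use the relations $I_0$ in an essential way to rigidify it, and this is where one needs the explicit structure of the toric systems satisfying~(\ref{technical}) (each non-final divisor is slo, equal to $H$ minus at most three exceptional divisors, with the interior steps being single $E_i$ or $E_i-E_j$) and the surjectivity lemmas of Section~2. I expect the cleanest route is to observe that $T_0\colon S_0\to M_{\theta_0}$ is already known (induction hypothesis) to be an isomorphism, so $F(R)$, being $\theta_0$-stable by Proposition~\ref{compatible}, equals $T_0(x_0)$ for a \emph{unique} $x_0\in S_0$; then one only has to identify $x_0=P$, which follows by testing against the crossing arrows $w$: $r_{w}(F(R))=r_{u_i}r_{u_j}$ is generically non-zero but the ``$E$-divisible'' crossing arrows vanish, and the locus in $S_0$ where exactly those sections vanish that pull back to $E$-divisible sections on $S$ is precisely $\{P\}$.
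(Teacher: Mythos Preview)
Your proposal is correct in spirit and isolates the right ingredients, but it is organized differently from the paper and skips the one concrete step that actually does the work. The paper does not first write $F(R)=T_0(x_0)$ and then identify $x_0$; instead it shows directly that any representation of $Q_0$ satisfying the four vanishing properties at vertex~$k$ (arrows into or out of $k$ vanish iff their sections pass through $P$; the distinguished arrows $w_l$ are nonzero) already lies in a single $\mathrm{PGL}(\mathbf 1)$-orbit. The normalization is not ``on arrows out of the source'' as you write, but rather: set $g_k=1$ and rescale the remaining $g_l$ so that $r'_{w_l}=1$ for every $l\neq k$, which is possible precisely because of Proposition~\ref{structure}. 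The punch line your sketch omits is that for any remaining arrow $a\colon i\to j$ with, say, $j<k$, the composite $w_j\circ a$ is a path from $i$ to $k$, and its value $r'_{w_j}r'_a=r'_a$ is forced to be $0$ or $1$ according to whether the section of $w_j\circ a$ passes through $P$; since $w_j$'s section does not, this happens iff $a$'s does. The symmetric trick handles arrows with $i>k$, and Lemma~\ref{cross arrow} handles arrows crossing $k$. Hence every arrow value is pinned down and the orbit is unique; since $T_0(P)$ visibly satisfies the four properties, it is that orbit.

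Your alternative route in the ``main obstacle'' paragraph --- invoke Proposition~\ref{compatible} and the induction hypothesis to get $F(R)=T_0(x_0)$, then argue $x_0=P$ from the vanishing pattern at vertex~$k$ --- is a legitimate reorganization and is arguably more conceptual. But to finish it you must show that the sections in the spaces $H^0(S_0,B_i+\cdots+B_{k-1})$ and $H^0(S_0,B_k+\cdots+B_{j-1})$ separate $P$ from every other point of $S_0$, and the cleanest way to see that is again the $w_j\circ a$ trick above, which in effect shows that \emph{every} arrow of the normalized $F(R)$ is determined by the pattern at vertex~$k$. So the two arguments converge on the same missing computation; the paper's version just front-loads it.
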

    \begin{proof}
      Let $R$ be a representation of $Q$ such that $r_e=0$, then by results in Section 3, the construction of $F$ and Proposition \ref{structure}, $F(R)$ satisfies the following properties:
      \begin{enumerate}
          \item For any $i<k$, all the arrows in $H^0(S_0,B_i+\ldots+B_{k-1})$ passing through $P$ have value $0$.
          \item For each $i<k$, the unique arrow $w_i$ in $H^0(S_0,B_i+\ldots+B_{k-1})$ not passing through $P$ have nonzero value.
          \item For any $j>k$, all the arrows in $H^0(S_0,B_{k}+\ldots+B_{j-1})$ passing through $P$ have value $0$.
          \item For each $j>k$, the unique arrow $w_j$ in $H^0(S_0,B_{k}+\ldots+B_{j-1})$ not passing through $P$ have nonzero value.
    \end{enumerate}
    Claim: All representations of $Q_0$ satisfying the above properties are in the same $\mathrm{PGL(\mathbf{1})}$ orbit.\
    
    Let $R'$ be such a representation, then by letting $g_k=1$ and choose suitable $g_l$ for $l\neq k$ and replace $R'$ by $g\cdot R$ we can assume $r'_{w_i}=1$ for all $i$. We now show the value of all other arrows only depends on $Q_0$, instead of $R'$.\ 
    
    Let $a$ be an arrow in $Q$. If $t(a)<k$, then $w_{t(a)}\circ a$ is an arrow from $s(a)$ to $k$. Using the property above, we can see that:
      \begin{align*}
          r'_a=
          \begin{cases}
          0 &\text{if $div(a)$ passes through $P$}\\
          1 &\text{if $div(a)$ does not pass through $P$}
          \end{cases}
      \end{align*}
      Similarly, we can get the value for arrows with $s(a)<k$.\
    
    If $s(a)<k$ and $t(a)>k$. By Lemma \ref{cross arrow}, if the passes through $P$, then its value is $0$, otherwise $1$. This shows all such $R'$ satisfying the above properties are isomorphic to a representation $\mathfrak{R}$ whose values of arrows are given by for all $i$
    \begin{align*}
        \mathfrak{r}_{w_i}=1
    \end{align*}
    and for other arrows
    \begin{align*}
        \mathfrak{r}_{a}=
        \begin{cases}
        0 &\text{if $div(a)$ passes through $P$}\\
        1 &\text{if $div(a)$ does not pass through $P$}
        \end{cases}
    \end{align*}
    
    By Proposition \ref{compatible}, the uniqe orbit containing $F(\mathbf{V}(e)^S)$ is $\theta_0$-stable, so it corresponds to a point on $S_0$. On the other hand, it is clear that the any representative in the isomorphism class $T_0(P)=\bigoplus_{i=0}^n(L_i^\vee)_P$ satisfies the all the above properties, so the image must be in $T_0(P)$.
    \end{proof}

    \begin{prop}\label{to a point2}
      If $R$ is a representation in $\mathrm{Rep(Q)}$ such that $F(R)\in T_0(P)$ and $r_e\neq 0$,then $R$ is not $\theta$-stable.
    \end{prop}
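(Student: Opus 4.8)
The plan is to exhibit an explicit destabilizing subrepresentation of $R$ of weight $-1$, supported on the ``$E$-side'' of the quiver. First I would normalize: since $\mathrm{PGL}(\mathbf{1})$-equivalence is compatible with $F$ (Proposition~\ref{descent}) and with $\theta$-stability, and since any representative of the isomorphism class $T_0(P)=\bigoplus_i(L_i^\vee)_P$ is obtained from the ``evaluation at $P$'' representation by rescaling vertex spaces, I may replace $R$ by an equivalent representation — still with $r_e\neq 0$ — and assume that $F(R)$ is the evaluation representation, so that an arrow $a$ of $Q_0$ has nonzero value if and only if the section of $\mathcal{TS}_0^{op}$ that it represents does not pass through $P$.

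Next I would use the construction of $\phi$ in Theorem~\ref{map}, together with $r_e\neq 0$, to pin down which arrows of $R$ must vanish. Set $P_0=\{1,\dots,k-1\}\cup\{k'\}$. By Lemmas~\ref{left}, \ref{right} and \ref{cross arrow}, every arrow of $Q$ from a vertex $i<k$ to $k$, every crossing arrow of $Q$ from some $i<k$ to some $j>k$, and every arrow of $Q$ from $k'$ to some $j>k$ is matched under $F$ with an arrow $a$ of $Q_0$ whose section passes through $P$; for the arrows $i\to k$ and $k'\to j$ the corresponding $\phi$-image is a path through $e$, so in $R$ one has $r_a=r_e\cdot r_b$, and since $r_a=0$ while $r_e\neq 0$ we get $r_b=0$ (for the crossing arrows the vanishing is immediate). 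Because $Q$ is acyclic with vertices ordered $1<\dots<k-1<k<k'<k+1<\dots<n$, these three families are exactly the arrows of $Q$ whose source lies in $P_0$ and whose target lies outside $P_0$: note that $u_i\colon i\to k'$ lands inside $P_0$, and that $e\colon k\to k'$ has source outside $P_0$.

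By Lemma~\ref{subrep} this means that the $0/1$-vector $\vec d$ with support exactly $P_0$ is the dimension vector of a proper nonzero subrepresentation $S\subset R$. Writing $(t_1,\dots,t_n)$ for the toric form of $\theta$, and using the convention $t_0=t_{n+1}=0$ in the boundary cases $k=1,n$ as in Proposition~\ref{weight}, our choice of $\theta$ is precisely the one with $t_k=t_{k-1}+t_{k+1}-1$; since $\sum_{i=1}^{k-1}\theta_i=-t_{k-1}$ and $\theta_{k'}=t_k-t_{k+1}$, we obtain
\[
\theta(S)=\Big(\sum_{i<k}\theta_i\Big)+\theta_{k'}=-t_{k-1}+(t_k-t_{k+1})=-1<0,
\]
so $R$ is not $\theta$-semistable, and in particular not $\theta$-stable.

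The step that requires real care is the second one: matching arrows of $Q$ with arrows of $Q_0$ under $F$ precisely enough to be certain that the three families listed really do exhaust all arrows leaving $P_0$ and that each is the image of a section through $P$. This is exactly where one uses that all of the relevant segments of $\mathcal{TS}^{op}$ acquire a $-E$ (Lemmas~\ref{left}--\ref{cross arrow}) together with the acyclicity of $Q$; the boundary cases $k=1$ and $k=n$ are handled exactly as in the proof of Proposition~\ref{weight}.
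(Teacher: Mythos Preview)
Your proof is correct and follows essentially the same approach as the paper: both exhibit the destabilizing subrepresentation supported on $\{1,\dots,k-1,k'\}$, verify via the construction of $F$ and the hypothesis $r_e\neq 0$ that every arrow leaving this set vanishes, and compute its $\theta$-weight to be $-1$. One small correction: for the crossing arrows $b\colon i\to j$ in $Q$ with $i<k<j$, the vanishing is not ``immediate'' --- by the definition of $\phi$ in Theorem~\ref{map} one has $\phi(a)=eb$ for the matching arrow $a$ of $Q_0$, so $F(R)_a=r_e r_b$ and you still need $r_e\neq 0$ to deduce $r_b=0$, just as for the other two families.
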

    \begin{proof}
      Note $F(R)$ satisfies the 4 conditions in the proof of the previous theorem. Using the construction of $F$ and the fact that $r_e\neq 0$, we see that $R$ satisfies:
      \begin{enumerate}
          \item For any $i<k$, all arrows in from $i$ to $k$ have value $0$.
          \item For any $j>k$, all arrows from $k'$ to $j$ have value $0$.
          \item For any $i<k<j$, all arrows from $i$ to $j$ have value $0$. 
      \end{enumerate}
      Note the last property uses the same argument in the last paragraph of proof of Theorem.\ 
      Hence we see that $R$ has a subrepresentation $S$ defined by:
      \begin{align*}
          &\dim(S_1)=\dim(S_2)=\ldots=\dim(S_{k-1})=\dim(S_{k'})=1\\
          &\dim(S_k)=\dim(S_{k+1})=\ldots=\dim(S_n)=0
      \end{align*}
      We compute:
      \begin{align*}
          \theta(S)&=-2b_0+2(b_0-b_1)+\ldots+2(b_{k-2}-b_{k-1})+(2b_{k-1}-1)\\
          &=-1
      \end{align*}
      This shows $R$ is not $\theta$-stable.
    \end{proof}
    
    \begin{cor}\label{commute}
      The natural morphism
      \begin{equation*}
           F:\mathrm{Rep(Q)}\to \mathrm{Rep(Q_0)}
      \end{equation*}
      descends to a projective morphism
      \begin{equation*}
          f:M_{\theta}\to M_{\theta_0}
      \end{equation*}
      which fits into a commutative diagram
      \[ \begin{tikzcd}
      S \arrow[r,dashed,"T"] \arrow{d}{\pi} & M_\theta \arrow{d}{f} \\%
      S_0 \arrow{r}{T_0}& M_{\theta_0}
      \end{tikzcd}
      \]
      In other words, the above diagram is commutative wherever $T$ is defined.
      \end{cor}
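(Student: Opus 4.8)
The plan is to assemble the corollary from the pieces already established. First I would check that $F$ descends to a morphism of GIT quotients. Recall that $M_\theta = \Proj(B_\theta)$ and $M_{\theta_0} = \Proj(B_{\theta_0})$, where $B_{\theta}$ (resp.\ $B_{\theta_0}$) is the ring of $\theta$-semi-invariants (resp.\ $\theta_0$-semi-invariants) on $\mathrm{Rep(Q)}$ (resp.\ $\mathrm{Rep(Q_0)}$). By Proposition \ref{compatible}, $F$ sends $\theta$-stable points to $\theta_0$-stable points, and by Proposition \ref{weight} together with Lemma \ref{criterion} every semistable point is stable on both sides, so $F$ restricts to a morphism $\mathrm{Rep(Q)}^{S}_\theta \to \mathrm{Rep(Q_0)}^{S}_{\theta_0}$. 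By Proposition \ref{descent} this restricted morphism is $\mathrm{PGL}(\mathbf{1})$-equivariant (the comparison of group elements is exactly the formula there), so it descends to a morphism $f\colon M_\theta \to M_{\theta_0}$ on the quotients. Concretely, on the level of rings, the algebra map $\phi$ of Theorem \ref{map} pulls back $\theta_0$-semi-invariants to $\theta$-semi-invariants (again using that $2\theta_0(S')=\theta(S)$, so the relevant characters match up after the doubling), giving a graded ring homomorphism $B_{\theta_0}\to B_\theta$ and hence the morphism of $\Proj$'s. Since both $M_\theta$ and $M_{\theta_0}$ are projective schemes, $f$ is automatically projective.

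Next I would verify commutativity of the square where $T$ is defined. Let $x\in S$ be a point in the domain of $T$, so $T(x)$ is the isomorphism class of the representation $\bigoplus_{i=1}^n (L_i^\vee)_x$ of $Q$, and $T_0(\pi(x))$ is the class of $\bigoplus_{i=1}^n ((L_i')^\vee)_{\pi(x)}$ of $Q_0$. The point is that $f(T(x)) = T_0(\pi(x))$. This is essentially a fibrewise restatement of the construction of $\phi$: an arrow $a$ of $Q_0$ corresponding to $s\in \Hom_{\cO_{S_0}}(L_i',L_j')$ is sent by $\phi$ to the element of $\mathbf{k}Q/I_\mathcal{L}$ representing $\pi^* s \in \Hom_{\cO_S}(\pi^*L_i',\pi^*L_j')$, and evaluation of $\pi^* s$ at a point of the fibre over $\pi(x)$ recovers evaluation of $s$ at $\pi(x)$ (using $\pi_*\cO_S = \cO_{S_0}$ and the projection formula, i.e.\ equation (3.3)). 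So $F$ applied to the stalk representation at $x$ gives, up to the $\mathrm{PGL}(\mathbf{1})$-action, the stalk representation at $\pi(x)$; passing to orbits gives $f(T(x)) = T_0(\pi(x))$. One should note $\pi(x)\neq P$ since otherwise the fibre representation would have $r_e = 0$ and, by Proposition \ref{to a point}/Theorem \ref{to a point}, could not be the generic stalk representation; but in fact for $x$ in the domain of $T$ this case is automatically excluded, and the verification above is a direct computation with $\phi$.

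I expect the main obstacle to be bookkeeping rather than a conceptual difficulty: one has to be careful that the graded ring map $B_{\theta_0}\to B_\theta$ induced by $\phi$ lands in the correct graded pieces, because the weight $\theta$ is not simply $\pi^*\theta_0$ but the doubled-and-shifted weight of Section 6, so a semi-invariant of weight $r\theta_0$ pulls back to a semi-invariant of weight $r\theta$ only after tracking the factor of $2$ and the role of the vertex $k'$ carrying the extra weight $2b_{k-1}-1$. The cleanest way to handle this is to do everything at the level of stable loci and $\mathrm{PGL}(\mathbf{1})$-orbits (invoking Propositions \ref{weight}, \ref{descent}, \ref{compatible}), where the identity $\theta(S)=2\theta_0(S')$ makes the matching transparent, and only then appeal to the fact that the GIT quotient of the stable locus is canonically the relevant open (here: all of $M_\theta$, $M_{\theta_0}$, since stable $=$ semistable). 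The commutativity of the diagram then follows from the explicit description of $T$, $T_0$ as stalk representations together with the compatibility of $\phi$ with pullback of sections.
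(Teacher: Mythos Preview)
Your construction of $f$ is essentially the same as the paper's: restrict $F$ to stable loci using Proposition~\ref{compatible}, descend via Proposition~\ref{descent}, and observe projectivity from both moduli being projective. The graded ring digression is unnecessary (and, as you note, awkward because of the weight shift), but you correctly fall back to the orbit-level argument.

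There is, however, a genuine gap in your commutativity check. You assert that for $x$ in the domain of $T$ one automatically has $\pi(x)\neq P$, i.e.\ that $T$ cannot be defined at points of the exceptional curve $E$. This is false: the arrow $e$ represents the canonical section of $\cO_S(E)$, so for $x\in E$ the stalk representation $T(x)$ has $r_e=0$, but such a representation can perfectly well be $\theta$-stable (indeed, the paper later shows that $C=\mathbf{V}(e)^S/\!/\mathrm{PGL}(\mathbf{1})\cong\mathbb{P}^1$ is nonempty, and eventually that $T$ is defined on all of $S$). Your argument via evaluation of $\pi^*s$ at $x$ breaks down precisely here, because for $x\in E$ the point $\pi(x)=P$ and the ``evaluation at $\pi(x)$ recovers evaluation of $s$'' claim no longer identifies $F(T(x))$ with $T_0(\pi(x))$ directly.

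The paper handles this case separately: if $T$ is defined at $x\in E$, then $T(x)\in\mathbf{V}(e)$ is $\theta$-stable, so Theorem~\ref{to a point} gives $F(T(x))\in T_0(P)$, whence $f(T(x))=T_0(P)=T_0(\pi(x))$. You need to add this second case to complete the proof.
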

      \begin{proof}
      By Proposition \ref{compatible}, we can restrict the domain of $F$ to get 
      \begin{equation*}
          F_0:\mathrm{Rep(Q)}^{S}_\theta\to \mathrm{Rep(Q_0)}^{S}_{\theta_0} 
      \end{equation*}
      Composed with the projection map $\mathrm{Rep(Q)}^{S}_\theta\to M_{\theta_0}$ of the geometric quotient, we obtain a morphism
      \begin{equation*}
          F_1:\mathrm{Rep(Q)}^{S}_\theta\to \mathrm{Rep(Q_0)}^{S}_{\theta_0}//\mathrm{PGL(\mathbf{1})}=M_{\theta_0}
      \end{equation*}
      Proposition \ref{descent} shows $F_1$ descends to the quotient, and give morphism
      \begin{equation*}
          f:M_{\theta}\to M_{\theta_0}
      \end{equation*}
      Since both $M_{\theta}$,$ M_{\theta_0}$ are projective schemes, $f$ is a projective morphism.\
      
      Suppose the tautological rational map $T$ is defined at a point $s\in S$ and $s$ does not lie on the exceptional curve $E$, then we know 
      \begin{align*}
          T(s)=&\bigoplus_{i=1}^{n-k}(\pi^*\cO(-D_{i}))_s\oplus(\pi^*\cO(-D_{n-k+1}+E))_s\oplus\\
          &(\pi^*\cO(-D_{n-k+1}))_s\oplus\bigoplus_{i=n-k+2}^n(\pi^*\cO(-D_{i}+E))_s
      \end{align*}
      is $\theta$-stable. Since $s\notin E$, there exist an neighbourhood $V$ of $s$ such that $\pi|_V=id$. We note that by projection formula
      \begin{align}
          A_0&=\mathrm{End}\big(\bigoplus_{i=1}^n\cO(-D_i)\big)\\
          &=\mathrm{End}\big(\bigoplus_{i=1}^n\pi^*(\cO(-D_i)\big)
      \end{align}
      Recall the construction of $F$, in the definition of ring homomorphism, for an arrow $a$ in $Q_0$ from $i$ to $j$ representing an section $s$ in 
      $\Hom_{\mathcal{O}_{S_0}}(L_i,L_j)$, we pulled it back by $\pi^*$ to get an element in $\Hom_{\mathcal{O}_S}(\pi^*L_p,\pi^*L_q)$, then find the corresponding arrow or multiple of arrows using the Lemmas in section 3. Hence tracing the definition of $F$ and the descent we have
      \begin{equation*}
          f(T(s))=\bigoplus_{i=1}^n(\pi^*\cO(-D_i))_s
      \end{equation*}
      whose $A_0$-module sturcture is given by (6.13) and (6.14). Since $\pi=id$ near $s$, we see $\bigoplus_{i=1}^n(\pi^*\cO(-D_i))_s$ as an $A_0$-module is isomorphic to $\bigoplus_{i=1}^n(\cO(-D_i))_{\pi(s)}=T_0(\pi(s))$.\
      
      Suppose $T$ is defined at a point $s\in E$, then $T(s)$ is $\theta$-stable and $T(s)\in \mathbf{V}(e)$. By Theorem \ref{to a point}, we see $f(T(s))=T_0(P)=T_0(\pi(s))$.
    \end{proof}
    
    \begin{prop}\label{nonempty}
      The tautological map $T$ is defined on a nonempty open subset of $S\backslash E$. In particular, $$M_\theta\neq \emptyset$$
    \end{prop}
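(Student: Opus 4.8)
The plan is to produce, for every $s$ in a suitable nonempty open subset $V\subseteq S\setminus E$, a $\theta$-stable representation whose isomorphism class is $\bigoplus_i(L_i^\vee)_s$; since $M_\theta$ is a separated scheme the domain of the rational map $T$ is open in $S$, and exhibiting such representations then shows that $V$ is contained in this domain, whence $M_\theta\neq\emptyset$. Concretely, I would take $V$ to be the complement in $S$ of $E$ together with the finitely many divisors of zeros of the sections attached to the arrows of $Q$; as $S$ is irreducible, $V$ is nonempty and open. For $s\in V$ the representation $R:=T(s)=\bigoplus_i(L_i^\vee)_s$ has every arrow value nonzero, and in particular $r_e\neq 0$, so $R\in U$; moreover, a direct computation with the construction of $F$ (Theorem \ref{map}), using that $\pi$ is an isomorphism near $s$, gives $F(R)=T_0(\pi(s))$, cf.\ the proof of Corollary \ref{commute}, and $F(R)$ is $\theta_0$-stable because $T_0\colon S_0\to M_{\theta_0}$ is an isomorphism.

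It then remains to show that $R$ is $\theta$-stable. Since every arrow of $R$ is nonzero, Lemma \ref{subrep} identifies the nonzero proper subrepresentations of $R$ with the proper nonempty \emph{successor-closed} subsets $P\subseteq Q_{vx}$, i.e.\ subsets $P$ such that every arrow with source in $P$ has target in $P$; the weight of the corresponding subrepresentation is $\theta(P):=\sum_{i\in P}\theta_i$. Thus $R$ is $\theta$-stable if and only if $\theta(P)>0$ for every proper nonempty successor-closed $P\subseteq Q_{vx}$. The same discussion on $S_0$ shows that $T_0(s_0)$ is $\theta_0$-stable for generic $s_0$, and hence that $\theta_0(P_0)>0$ for every proper nonempty successor-closed $P_0\subseteq Q_{0,vx}$; this is the inductive input.

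To pass from the statement on $Q_0$ to the statement on $Q$ I would run the three cases familiar from Proposition \ref{on U}, using the relations $\theta_i=2\theta_{0,i}$ for $i\neq k,k'$, $\theta_k+\theta_{k'}=2\theta_{0,k}$, $\theta_{k'}=2b_{k-1}-1$ and $\theta_k=1-2b_k$ coming from the construction of $\theta$ out of $\theta_0$, noting that admissibility of $\theta_0$ forces $b_{k-1}\ge 1$, so $\theta_{k'}\ge 1$. If $k,k'\in P$, then identifying $k$ with $k'$ yields a proper nonempty successor-closed $\bar P\subseteq Q_{0,vx}$ with $\theta(P)=2\theta_0(\bar P)>0$. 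If $k\in P$ and $k'\notin P$, then $e$ is a nonzero arrow with source in $P$ and target outside $P$, contradicting successor-closedness, so this case does not occur. The remaining, delicate case is $k'\in P$, $k\notin P$.

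The main obstacle is exactly this last case: a successor-closed $P$ containing $k'$ but not $k$ need not descend cleanly to $Q_{0,vx}$, because by Lemmas \ref{left}, \ref{right} and \ref{cross arrow} an arrow out of $k$ in $Q_0$ whose section does not vanish at the blown-up point corresponds in $Q$ to an arrow out of $k$, not to a path through $k'$, so membership of $k'$ in $P$ does not automatically force the corresponding successors into $P$. I would settle it by the same style of case analysis as in Proposition \ref{structure} (see \cite{QZ}): one shows that either the arrow vanishings forced by $P$ already contradict $\theta_0$-stability of $T_0(\pi(s))$, or the presence of $k'$ contributes the summand $\theta_{k'}=2b_{k-1}-1\ge 1$, which offsets the only possible negative contribution caused by omitting $k$, so that $\theta(P)>0$ anyway. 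Once $R=T(s)$ is $\theta$-stable for every $s\in V$, it is in particular $\theta$-semistable, so $T$ is defined on $V$, a nonempty open subset of $S\setminus E$, and $M_\theta\neq\emptyset$.
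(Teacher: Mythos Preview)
Your overall strategy coincides with the paper's: choose $s$ so that every arrow value of $R=T(s)$ is nonzero, identify proper nonzero subrepresentations with proper nonempty successor-closed subsets $P\subsetneq Q_{vx}$, and split according to the membership of $k,k'$ in $P$. The cases $\{k,k'\}\subseteq P$ (descend to $Q_0$ and use $\theta_0$-stability of $F(R)$) and $k\in P,\ k'\notin P$ (impossible since $r_e\neq 0$) are handled exactly as in the paper.

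The gap is in the case $k'\in P,\ k\notin P$. Your sketch invokes ``arrow vanishings forced by $P$'' and a case analysis in the style of Proposition~\ref{structure}, but here \emph{no} arrows vanish by the choice of $s$, so successor-closedness constrains $P$, not the arrow values, and the toric-system-by-toric-system machinery behind Proposition~\ref{structure} is not the right tool. The paper's resolution is short and structural. Because for every $i<k<j$ the space $e_j(\mathbf{k}Q/I)e_i$ is nonzero (Proposition~\ref{hom}), every such pair $(i,j)$ is joined by a path in $Q$ with nonzero value; hence successor-closedness forces the dichotomy
\[
P\cap\{1,\ldots,k-1\}=\emptyset\qquad\text{or}\qquad \{k+1,\ldots,n\}\subseteq P.
\]
In the first subcase one sets $S^\#$ by deleting $k'$ from $P$; this is still successor-closed (the remaining vertices are all $>k'$), has equal membership at $k,k'$, and $\theta(P)=\theta(S^\#)+\theta_{k'}=\theta(S^\#)+(2b_{k-1}-1)>\theta(S^\#)\ge 0$ by the already-settled case. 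In the second subcase one sets $S^\#$ by adjoining $k$ to $P$; this is successor-closed (every arrow out of $k$ lands in $k'$ or in some $j>k$, all of which lie in $P$), again has equal membership at $k,k'$, and $\theta(P)=\theta(S^\#)-\theta_k=\theta(S^\#)+(2b_k-1)>\theta(S^\#)\ge 0$. The missing idea in your proposal is precisely this one-vertex modification $P\mapsto S^\#$ within $Q$ that reduces the delicate case to the equal-membership case, rather than attempting to descend directly to $Q_0$ or to invoke a case analysis.
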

    \begin{proof}
        Consider $R$ in the class $\bigoplus_{i=1}^{n-k}(\pi^*\cO(-D_{i}))_s\oplus(\pi^*\cO(-D_{n-k+1}+E))_s\oplus(\pi^*\cO(-D_{n-k+1}))_s\oplus\bigoplus_{i=n-k+2}^n(\pi^*\cO(-D_{i}+E))_s$ of representations of $(Q,I)$. For any arrow $a$ in $Q$, $r_a=0$ if $s\in div(a)$. Since $div(a)$ is a divisor for all arrows and there are finitely many arrows $a$ in $Q$, there exist an open set $\mathcal{U}\subset S$ such that for any $s\in \mathcal{U}$, $r_a\neq 0$ for all arrows $a\in Q_{ar}$. We take $s\in \mathcal{U}$. By Remark \ref{comparison}, this representation $\bigoplus_{i=1}^{n-k}(\pi^*\cO(-D_{i}))_s\oplus(\pi^*\cO(-D_{n-k+1}+E))_s\oplus(\pi^*\cO(-D_{n-k+1}))_s\oplus\bigoplus_{i=n-k+2}^n(\pi^*\cO(-D_{i}+E))_s$ will be the best candidate to be stable.\ 
        
        Let $s_0=\pi(s)$, then $R'=F(R)\in\bigoplus_{i=1}^n(\cO(-D_i))_{s_0}$. By definition of $F$, $r'_b\neq 0$ for all arrows $b\in Q_{0,ar}$. Since $M_{\theta_0}\neq \emptyset$, by Remark \ref{comparison},  $R'$ is $\theta_0$-semistable. Now let $S$ be a subrepresentation of $R$. We first assume that  $\dim(S_k)=\dim(S_{k'})$. We claim we can find a subrepresentation $S'\subset R'$ such that $\dim(S'_i)=\dim(S_i)$ for all $i\in\{1,2,\ldots,n\}$. To check $S'$ is well defined, by Lemma \ref{subrep}, it suffices to show for any $i<j$ with $\dim(S'_i)=1$ and $\dim(S'_j)=0$, there are no arrow from $i$ to $j$. By definition of $S'$, $\dim(S_i)=1$ and $\dim(S_j)=0$. By our choice of $s$, this is only possible when there are no arrows from $i$ to $j$. Since $E^2=-1$, either $j<k$ or $i>k$, in both cases the arrows from $i$ to $j$ in $Q$ are in bijection with arrows from $i$ to $j$ in $Q_0$, hence there is no arrows from $i$ to $j$ in $Q_0$. Hence $S'$ with the prescribed dimension is well-defined and we obtain
        \begin{align*}
            \theta(S)=2\theta_0(S')
        \end{align*}
        Since $R'$ is stable and $S'\subset R'$, we have $\theta_0(S')>0$, so $\theta(S)>0$.\ 
        
        Suppose $\dim(S_k)\neq\dim(S_{k'})$, then either $\dim(S_k)=1$ and $\dim(S_k')=0$, but this is not possible since such dimensions require $r_e=0$, or $\dim(S_k)=0$ and $\dim(S_k')=1$, in this case we claim either $\dim(S_i)=0$ for all $i<k$ or $\dim(S_j)=1$ for all $j>k$.\ 
        
        If the claim is not true, the there exist $i<k<j$ such that $\dim(S_i)=1$ and $\dim(S_j)=0$. By Proposition \ref{hom}, there is an arrow $a$ from $i$ to $j$, hence $r_a=0$, contradicting our choice of $s$.\ 
        
        If $\dim(S_i)=0$ for all $i<k$, then we see that $S^\#$ defined by $\dim(S^{\#}_l)=\dim(S_l)$ for all $i$ except $\dim(S^\#_{k'})=0$ is also a subrepresentation of $R$. Note $\theta(S)=\theta(S^\#)+(2a_k-1)>\theta(S^\#)$. Since $\dim(S^\#_k)=\dim(S^\#_{k'})=0$, we can apply the first part of this proof to show $\theta(S^\#)>0$. Hence $\theta(S)>0$.
        
        If $\dim(S_j)=1$ for all $j>k$, then we see that $S^\#$ defined by $\dim(S^\#_l)=\dim(S_l)$ for all $l$ except $\dim(S^\#_{k})=1$ is also a subrepresentation of $R$. Note $\theta(S)=\theta(S^\#)-(1-2b_{k-1})>\theta(S^\#)$. Since $\dim(S^\#_k)=\dim(S^\#_{k'})=1$, we can apply the first part of this proof to show $\theta(S^\#)>0$. Hence $\theta(S)>0$.\
        
        Thus we have shown that the tautological rational map $T$ is defined on $\mathcal{U}$, which is a nonempty open subset of $S\backslash E$. Moreover, $M_\theta$ contains the image of $T|_{\mathcal{U}}$, hence has to be nonempty.
    \end{proof}  
      
    \begin{cor}\label{isom on open}
      $f$ is a surjective morphism, $T$ is defined on $S\backslash E$. Moreover, $f$ induces an isomorphism between $M_\theta\backslash C$ and $S_0-\{P\}$ and $T$ induces an isomorphism between $S\backslash E$ and $M_\theta\backslash C$.
    \end{cor}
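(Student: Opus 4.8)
The plan is to run everything through the commutative square of Corollary~\ref{commute}: first establish that $f$ is surjective and identify its fibre over $T_0(P)$, then promote the tautological rational map $T$ to a morphism on all of $S\setminus E$, and finally read off the isomorphisms. For surjectivity: $f$ is projective by Corollary~\ref{commute}, hence proper, so $f(M_\theta)$ is closed in $M_{\theta_0}$; by Proposition~\ref{nonempty} $T$ is defined on a nonempty open $\mathcal{U}\subseteq S\setminus E$ (so $M_\theta\neq\emptyset$), and since $\pi$ restricts to an isomorphism $S\setminus E\xrightarrow{\sim}S_0\setminus\{P\}$ and $T_0$ is an isomorphism, commutativity of the square gives $f(T(\mathcal{U}))=T_0(\pi(\mathcal{U}))$, a dense subset of the irreducible variety $M_{\theta_0}$; hence $f$ is surjective. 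Writing $q_0:=T_0(P)$, Theorem~\ref{to a point} gives $C\subseteq f^{-1}(q_0)$, while Proposition~\ref{to a point2} shows any $\theta$-stable $R$ with $F(R)$ in the $\PGL$-orbit of $T_0(P)$ has $r_e=0$, hence $f^{-1}(q_0)\subseteq C$; so $f^{-1}(q_0)=C$, and therefore $M_\theta\setminus C=f^{-1}(M_{\theta_0}\setminus\{q_0\})$ and $f$ carries $M_\theta\setminus C$ onto $M_{\theta_0}\setminus\{q_0\}$.

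The hard part will be showing $T$ is defined at \emph{every} $s\in S\setminus E$, and I would do it as follows. Set $s_0:=\pi(s)\neq P$, so $T_0(s_0)$ is a $\theta_0$-stable point of $M_{\theta_0}\setminus\{q_0\}$; by the surjectivity above there is a class $[R]\in M_\theta$ with $f([R])=T_0(s_0)$, and $[R]\in M_\theta\setminus C$ since $T_0(s_0)\neq q_0$, so $R$ is $\theta$-stable with $r_e\neq0$, i.e. $R\in U$. On the other hand, let $R_s$ be the representation of $Q$ assembled from the data $\bigoplus_i(L_i^\vee)_s$ that would define $T(s)$, irrespective of stability; since $e$ corresponds to the canonical section of $\cO_S(E)$ one has $r_e\neq0$ in $R_s$, so $R_s\in U$, and because $\pi$ is an isomorphism near $s$ the construction of $F$ yields $F(R_s)\cong T_0(\pi(s))=T_0(s_0)$ (this is precisely the identity verified in the proof of Corollary~\ref{commute}, and it does not use semistability of $R_s$). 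Thus $F(R_s)\sim F(R)$ with $R_s,R\in U$, so $R_s\sim R$ by Proposition~\ref{injective}; since $\PGL$-equivalent representations are simultaneously stable, $R_s$ is $\theta$-stable, so $T$ is defined at $s$ with $T(s)=[R]\in M_\theta\setminus C$. I expect this to be the real obstacle: on the locus $\mathcal{U}$ of Proposition~\ref{nonempty} all arrows of $Q$ are nonzero and semistability is checked by hand, but for general $s\notin E$ the representation $R_s$ may have vanishing arrows and more subrepresentations, and the argument above sidesteps a case analysis by transporting stability from a representation produced by the properness of $f$ via the injectivity of Proposition~\ref{injective}.

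Finally I would assemble the statement. Proposition~\ref{injective} makes $f|_{M_\theta\setminus C}$ injective, and the square now gives $f\circ T|_{S\setminus E}=T_0\circ\pi|_{S\setminus E}$, an isomorphism onto $M_{\theta_0}\setminus\{q_0\}$. Put $\psi:=(T_0\circ\pi|_{S\setminus E})^{-1}\circ f|_{M_\theta\setminus C}\colon M_\theta\setminus C\to S\setminus E$. Then $\psi\circ T|_{S\setminus E}=\mathrm{id}$ at once, and for $[R]\in M_\theta\setminus C$ with $s:=\psi([R])$ one has $f(T(s))=T_0(\pi(s))=f([R])$ with both $T(s)$ and $[R]$ in $M_\theta\setminus C$, so $T(s)=[R]$ by injectivity of $f|_{M_\theta\setminus C}$; since $M_\theta\setminus C$ is reduced this gives $T|_{S\setminus E}\circ\psi=\mathrm{id}$. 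Hence $T|_{S\setminus E}\colon S\setminus E\xrightarrow{\sim}M_\theta\setminus C$ is an isomorphism, and $f|_{M_\theta\setminus C}=(T_0\circ\pi|_{S\setminus E})\circ(T|_{S\setminus E})^{-1}$ is an isomorphism $M_\theta\setminus C\xrightarrow{\sim}M_{\theta_0}\setminus\{q_0\}\cong S_0\setminus\{P\}$; together with the surjectivity of $f$ this is exactly the assertion.
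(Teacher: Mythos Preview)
Your proposal is correct and follows the paper's proof very closely through the first two stages: surjectivity of $f$ via properness and the dense image $T_0(\pi(\mathcal{U}))$, and extension of $T$ to all of $S\setminus E$ by producing a stable $R$ over $T_0(s_0)$ and using Proposition~\ref{injective} to transport stability to the fibre representation $R_s$. You are in fact more careful than the paper in making explicit that both $R$ and $R_s$ lie in $U$ before invoking Proposition~\ref{injective}.

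The genuine difference is in the final step. The paper shows $f|_{M_\theta\setminus C}$ is a bijection onto $M_{\theta_0}\setminus\{T_0(P)\}$, observes that $M_\theta\setminus C=T(S\setminus E)$ is reduced and irreducible, and then applies Zariski's Main Theorem (a bijective morphism from a reduced scheme to a normal variety is an isomorphism) to conclude $f|_{M_\theta\setminus C}$ is an isomorphism; the isomorphism for $T|_{S\setminus E}$ then drops out of the commutative square. You instead construct an explicit candidate inverse $\psi=(T_0\circ\pi|_{S\setminus E})^{-1}\circ f|_{M_\theta\setminus C}$ for $T|_{S\setminus E}$, verify $\psi\circ T=\id$ directly from the square, and verify $T\circ\psi=\id$ on closed points using injectivity of $f|_{M_\theta\setminus C}$, then pass to equality of morphisms via reducedness of $M_\theta\setminus C$. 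Your route is more elementary---it sidesteps both the irreducibility check and the appeal to Zariski's Main Theorem---while the paper's route is slightly more conceptual and isolates the structural reason ($f|$ is a bijection to a normal target). Both are valid; yours would work verbatim even if one had not established that $M_\theta$ is irreducible at this point.
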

    
    \begin{proof}
      By Proposition \ref{nonempty}, $T$ is defined on $\mathcal{U}\subset S\backslash E$. By Corollary \ref{commute}, $T_0(\pi(\mathcal{U}))$ is in the image of $f$. Note $T_0(\pi(\mathcal{U}))$ is an open dense subset of $M_{\theta_0}$. Since $f$ is proper, the image is all of $M_{\theta_0}$.\ 
    
      We claim $T$ is defined on $S\backslash E$. Let $s\in S\backslash E$, and let $s_0=\pi(s)$. We identify $S_0$ and $M_{\theta_0}$ using $T_0$, then 
      \begin{equation*}
          T_0(s_0)=\bigoplus_{i=1}^n(\cO(-D_i))_{s_0}
      \end{equation*}
       lies in the image of $f$ since $f$ is surjective. Pick a representation $R$ of $Q$ so that $f([R])=T_0(s_0)$. Now consider the $A$-module
      \begin{equation*}
          \bigoplus_{i=1}^{n-k}(\pi^*\cO(-D_{i}))_s\oplus(\pi^*\cO(-D_{n-k+1}+E))_s\oplus(\pi^*\cO(-D_{n-k+1}))_s\oplus\bigoplus_{i=n-k+2}^n(\pi^*\cO(-D_{i}+E))_s
      \end{equation*}
    which we need to prove to be $\theta$-stable. Take a basis for each of the 1-dimensional direct summands and apply $F$, we see from the proof of Corollary \ref{commute} that 
    \begin{equation*}
        F\Big(\bigoplus_{i=1}^{n-k}(\pi^*\cO(-D_{i}))_s\oplus(\pi^*\cO(-D_{n-k+1}+E))_s\oplus(\pi^*\cO(-D_{n-k+1}))_s\oplus\bigoplus_{i=n-k+2}^n(\pi^*\cO(-D_{i}+E))_s\Big)
    \end{equation*}
    is isomorphic to $\bigoplus_{i=1}^n(\cO(-D_i))_{s_0}$.
    By Lemma \ref{injective}, this shows 
    \begin{equation*}
        \bigoplus_{i=1}^{n-k}(\pi^*\cO(-D_{i}))_s\oplus(\pi^*\cO(-D_{n-k+1}+E))_s\oplus(\pi^*\cO(-D_{n-k+1}))_s\oplus\bigoplus_{i=n-k+2}^n(\pi^*\cO(-D_{i}+E))_s\sim R
    \end{equation*}
    Since $R$ is $\theta$-stable, so is $$\bigoplus_{i=1}^{n-k}(\pi^*\cO(-D_{i}))_s\oplus(\pi^*\cO(-D_{n-k+1}+E))_s\oplus(\pi^*\cO(-D_{n-k+1}))_s\oplus\bigoplus_{i=n-k+2}^n(\pi^*\cO(-D_{i}+E))_s$$. Thus $T$ is defined at $s$. This applies to all $s\in S\backslash E$, so the domain of $T$ contains $S\backslash E$.
    
    Restricting to $S\backslash E$, we have commutative diagram
    \[ \begin{tikzcd}
      S\backslash E \arrow[r,"T|_{S\backslash E}"] \arrow{d}{\pi|_{S\backslash E}} & M_\theta\backslash C \arrow{d}{f|_{M_\theta\backslash C}} \\%
      S_0\backslash\{P\} \arrow{r}{T_0|_{S_0\backslash\{P\}}}& M_{\theta_0}\backslash\{P\}
      \end{tikzcd}
      \]
    Since $f(\mathbf{V}(e)=\{T_0(P)\}$, and $f$ is surjective, $f|_{M_\theta\backslash C}$ is also surjective. By Lemma \ref{injective}, $f|_{M_\theta\backslash C}$ is also injective, so $f|_{M_\theta\backslash C}$ is a bijection. This in turn imply $T|_{S\backslash E}$ is surjective, since $T_0|_{S_0\backslash\{P\}}\circ \pi|_{S\backslash E}$ is an isomorphism. Since $S\backslash E$ is irreducible, so is 
    $M_\theta\backslash C=T|_{S\backslash E}(S\backslash E)$. Recall $M_\theta\backslash C$ is also reduced and $f|_{M_\theta\backslash C}$ is a bijection mapping it to a normal variety, so we can apply Zariski Main Theorem \cite[Corollary 4.4.3]{Liu} to conclude that $f|_{M_\theta\backslash C}$ is an isomorphism. Then $T|_{S\backslash E}$ is an isomorphism from the commutative diagram above.
    \end{proof}
    By now we have constructed a surjective morphism $f:M_\theta\to M_{\theta_0}$ which shares many feature as the blow up math $\pi:S\to S_0$, in the next section we show they are in fact the same.
    
    \section{Analysis of the fibre}
        
    We summarize what we know about $C$ so far: 
    \begin{itemize}
        \item $C$ is nonempty since $M_\theta$ is projective.
        \item $C$ is proper since $f$ is proper and $C=f^{-1}(P)$ by Theorem \ref{to a point} and Proposition \ref{to a point2}.
    \end{itemize}

    \begin{thm}\label{except locus}
      In the setting as the beginning of this section, we have $C\cong \mathbb{P}^1$
    \end{thm}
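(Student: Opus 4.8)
The plan is to describe $C$ completely explicitly and to exhibit it as the quotient of $\mathbb{A}^2\setminus\{0\}$ by a one-dimensional torus, so that $C\cong\mathbb{P}^1$; the isomorphism being, morally, ``restriction of a representation to the exceptional curve $E$''.

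\emph{Normal form.} Let $R$ be a $\theta$-stable representation of $(Q,I_{\mathcal{L}})$ with $r_e=0$, so that $[R]\in C$. By Proposition~\ref{structure} one has $r_{u_i}\neq 0$ for all $i<k$ and $r_{u_j}\neq 0$ for all $j>k$. Using the $\PGL$-action, rescale $R$ so that $r_{u_i}=1$ for every $i<k$ and $r_{u_j}=1$ for every $j>k$. Imposing these conditions forces the gauge parameters to satisfy $g_1=\cdots=g_{k-1}=g_{k'}$ and $g_k=g_{k+1}=\cdots=g_n$, so that exactly a one-dimensional residual torus $\mathbf{k}^*$ survives; it acts by a single character $\lambda$ on every arrow from the \emph{left block} $\{1,\dots,k-1,k'\}$ to the \emph{right block} $\{k,k+1,\dots,n\}$, and trivially on all other arrows (on $e$ in particular, whose value stays $0$). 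The extreme cases $k=1$ and $k=n$ are handled with the natural convention that the absent block contributes nothing.

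\emph{Reduction to two parameters.} Since $r_e=0$, every arrow of $Q$ factoring through $e$ has value $0$. Combining this with the relations of $I_{\mathcal{L}}$ and the normalization $r_{u_i}=r_{u_j}=1$, and using Lemmas~\ref{left}, \ref{right}, and~\ref{cross arrow}, one finds that the value of \emph{every} arrow of $Q$ is then determined by the values of the arrows from vertex $k-1$ to vertex $k$ and of the arrows from vertex $k'$ to vertex $k+1$; moreover the relations identify these two families with one another. The common datum is a point $(s,t)$ of $H^0(E,\cO_E(1))=\mathbf{k}^2$ — the relevant linear systems $|\pi^*B_{k-1}-E|$ and $|\pi^*B_k-E|$ restrict onto $H^0(E,\cO_E(1))$ by a Riemann--Roch count, of the surjective kind supplied by Lemmas~\ref{bpf1}, \ref{surj23}, and~\ref{surj22} — and the residual torus acts by $(s,t)\mapsto(\lambda s,\lambda t)$. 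This is the technical core of the argument; exactly as for Proposition~\ref{structure}, it is carried out by a finite case analysis over the shapes of $\mathcal{TS}^{op}$ permitted by~(\ref{technical}) (entries of type $H-\Delta$, $E_a$, or $E_a-E_b$), the bookkeeping being routine but long, and we refer to~\cite{QZ} for the details.

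\emph{Stability and conclusion.} If $(s,t)=(0,0)$ then all arrows out of the left block vanish, so by Lemma~\ref{subrep} the subrepresentation supported exactly on $\{1,\dots,k-1,k'\}$ is well defined; a direct computation with the toric form of $\theta$ from Proposition~\ref{weight} gives it weight $-1<0$, contradicting $\theta$-stability (this is precisely the computation in the worked example following Proposition~\ref{structure}). Conversely, for every $(s,t)\neq(0,0)$ the representation produced above automatically satisfies the relations, and checking the subrepresentations allowed by Lemma~\ref{subrep} shows that it is $\theta$-stable; two different rays $(s:t)$ give non-isomorphic representations since the residual torus acts with weight one on each coordinate. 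Hence $[R]\mapsto(s:t)$ is a bijection from $C$ onto $(\mathbb{A}^2\setminus\{0\})/\mathbf{k}^*=\mathbb{P}^1$, and it upgrades to an isomorphism of schemes via the tautological family of representations over $C$. Unravelling the construction, $(s:t)$ is the point of $\mathbb{P}(H^0(E,\cO_E(1)))\cong E$ obtained by restricting $R$ to $E$, so the isomorphism identifies $C$ with the exceptional curve of $\pi$. The one genuine obstacle is the middle step: proving that the relations collapse all the arrow data down to the single parameter $(s,t)$, uniformly over every toric system satisfying~(\ref{technical}).
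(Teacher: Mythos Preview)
Your outline matches the paper's: put $R$ in normal form using Proposition~\ref{structure} and the gauge action, show the normalized locus is parametrized by two scalars (the paper's Lemma~\ref{key}), then mod out by the residual $\mathbf{k}^*$.

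Two points of divergence. First, a minor one: you locate the two parameters specifically at the arrows $k-1\to k$ and $k'\to k+1$, but in many of the cases in the paper's tables (e.g.\ when $B_{k-1}$ or $B_k$ is a single exceptional divisor, so that $h^0(\pi^*B_{k-1}-E)=0$) there are no such arrows at all; the two determining arrows $a,b$ live elsewhere, sometimes as type~iii arrows crossing both $k$ and $k'$. Your heuristic identification with $H^0(E,\cO_E(1))$ is morally right, but the actual incarnation in the quiver varies with $\mathcal{TS}^{op}$.

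Second, and this is the real gap: you assert that every $(s,t)\neq(0,0)$ is $\theta$-stable by ``checking the subrepresentations allowed by Lemma~\ref{subrep}'', but this is precisely the step that is not obvious---for particular ratios $(s:t)$ some linear combinations $m_as+m_bt$ vanish, new subrepresentations appear, and one must rule out that any of them destabilize. The paper does \emph{not} verify this directly. Instead it argues: since every remaining arrow value is a fixed linear form in $(s,t)$, stability depends only on which of finitely many such forms vanish, so the unstable locus inside $\mathbb{A}^2$ is a finite union of lines through the origin, and $C$ is $\mathbb{P}^1$ minus finitely many points. But $C=f^{-1}(T_0(P))$ is a fibre of the proper morphism $f$, hence proper, and nonempty because $M_\theta$ is projective; therefore no points are missing and $C\cong\mathbb{P}^1$. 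This properness trick is what lets the paper sidestep a second round of case analysis for stability. Without it, your ``checking'' would require running through all the toric systems again, which you have not done.
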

    The idea of this proof is to reduce the $\mathrm{PGL(\mathbf{1})}$ action on $\mathbf{V}(e)$ to the scaling action of $\mathbf{k}^*$ on $\mathbb{A}^2-0$, thus showing the quotient is $\mathbb{P}^1$.\
    
    Let $R\in\mathbf{V}(e)$, by Proposition \ref{structure}, $r_{u_i}\neq 0$ for all $i$. Let 
    \begin{align*}
        g_\lambda=\Big(r_{u_1},r_{u_2},\ldots,r_{u_{k-1}},\lambda,1,\frac{\lambda}{r_{u_{k+1}}},\ldots,\frac{\lambda}{r_{u_n}})\Big)\in \mathrm{PGL(\mathbf{1})}
    \end{align*}
    where $\lambda\in\mathbf{k}^*$ is arbitrary,
    then $g_\lambda\cdot R\in \mathbf{V}(e)$ satisfies $r_{u_i}=1$ for all $i$. Moreover if $a\in Q_{ar}$, and $t(a)<k$, then $u_{t(a)}\circ a$ is an arrow from $s(a)$ to $k'$, thus if $u_{t(a)}\circ a $ is a multiple of $e$, which is equivalent to $div(a)$ passes through $P$, then $r_a=0$, otherwise $r_a\neq 0$. In the second case, we can replace $a$ by a scalar multiple of it and assume $r_a=1$. Thus
    \begin{align*}
        r_a=
        \begin{cases}
          0&\text{if $a$ passes through $P$}\\
          1&\text{if $a$ does not pass through $P$}
        \end{cases}
    \end{align*}
    Similarly, all arrows from with $s(a)>k$ have their values determined. We call such a representation in the normal form. A normalized representation $R\in \mathbf{V}(e)$ is determined by the values of the following three types of arrows
    \renewcommand{\labelenumi}{\roman{enumi}}
    \begin{enumerate}
        \item Indecomposible arrows $a$ with $t(a)=k$.
        \item Indecomposible arrows $a$ with $s(a)=k'$
        \item Indecomposible arrows $a$ with $s(a)<k$ and $t(a)>k'$.
    \end{enumerate}
    We note the subset of normalized representation in $\mathbf{V}(e)$ is an algebraic subset, more specifically, it is $\mathbf{V}(e,u_1-1,\ldots,u_n-1)$. For $\lambda\in\mathbf{k}^*$, let $\lambda$ acts on $\mathbf{V}(e,u_1-1,\ldots,u_n-1)$ by $g_\lambda$ as above, then it is clear from the definition of geometric quotient
    \begin{equation}
        \mathbf{V}(e,u_1-1,\ldots,u_n-1)^{S}//\mathbf{k}^*=\mathbf{V}(e)^{S}//\mathrm{PGL(\mathbf{1})}
    \end{equation}
    \begin{rem}
      From now on, we denote $\mathbf{V}(e,u_1-1,\ldots,u_n-1)$ by $W$ for simplicity of presentation.
    \end{rem}
 
    \begin{lem}\label{key}    
      A representation $R\in \mathbf{V}(e)$ satisfying $u_i=1$ for all $i$ is determined by the value of two arrows, in other words
      \begin{align*}
          \mathbf{V}(e,u_1-1,\ldots,u_n-1)\cong\mathbb{A}^2
     \end{align*}
    \end{lem}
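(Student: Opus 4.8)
The plan is to identify, for a representation $R$ lying in $W=\mathbf{V}(e,u_1-1,\ldots,u_n-1)$, exactly which arrow–values are free, and then to see that these form a $2$–dimensional affine space with two explicit coordinates coming from arrows into the vertex $k$. First I would invoke the normal–form analysis preceding the lemma: such an $R$ has $r_e=0$, $r_{u_i}=1$ for all $i$, and (since $E^2=-1$) for every arrow $a$ with $t(a)<k$ or $s(a)>k'$ the value $r_a$ is forced to be $0$ or $1$ according to whether $\operatorname{div}(a)$ passes through $P$. Hence $R$ is determined by the values of the three remaining families: (i) irreducible arrows with target $k$; (ii) irreducible arrows with source $k'$; (iii) irreducible arrows from some $i<k$ to some $j>k'$.

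Next I would cut these down using the relations in $I_\mathcal{L}$ together with the identifications $e_j(\mathbf{k}Q/I_\mathcal{L})e_i\cong H^0(S,\ldots)$ and Lemmas \ref{left}, \ref{right}, \ref{cross arrow}. For family (iii): by Lemma \ref{cross arrow} the distinguished cross–arrow from $i$ to $j$ is, modulo the already–normalized arrows, the product $u_iu_j$ and so has value $1$, while every other cross–arrow is a multiple of $e$ and so has value $0$; thus (iii) is determined. For family (i): restriction to the exceptional curve gives $H^0(S,\pi^*(B_i+\ldots+B_{k-1})-E)\to H^0(E,\mathcal{O}_E(1))$, a $2$–dimensional target since $(\pi^*D-E)|_E\cong\mathcal{O}_{\mathbb{P}^1}(1)$; using the surjectivity and multiplication statements established in Section 2, the value of every arrow into $k$ becomes a fixed $\mathbf{k}$–linear function of the values of two arrows $a_1,a_2$ into $k$ whose restrictions to $E$ span $H^0(E,\mathcal{O}_E(1))$. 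Dually, Lemma \ref{right} makes family (ii) a function of two arrows out of $k'$, and the ``cross'' relations of the shape $(\text{arrow into }k)\cdot u_j=u_i\cdot(\text{arrow out of }k')$, which hold in the relevant $H^0(S,\ldots-E)$ by construction of the quiver of sections, identify this second pair of parameters with the first. What survives is precisely the pair $(r_{a_1},r_{a_2})$.

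Finally I would check the converse: any $(t_1,t_2)\in\mathbb{A}^2$ extends to a representation in $W$. Setting $r_{a_1}=t_1$, $r_{a_2}=t_2$ and propagating the formulas above produces a full set of arrow–values, and these satisfy every relation in $I_\mathcal{L}$ because each such relation is a linear relation among sections in some $H^0(S,\ldots)$ and the propagated values are exactly the evaluations of those sections under the ``$e=0$'' degeneration of point–evaluation along $E$. Combined with the injectivity of $(r_{a_1},r_{a_2})$ on $W$ from the previous step, this gives a bijection $W\cong\mathbb{A}^2$; as $W$ is visibly reduced, it is an isomorphism of schemes.

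The main obstacle is the bookkeeping of the middle step: one must verify, uniformly over the finitely many possible positions of the inserted curve $E$ relative to the three ``big'' divisors $H-\Delta,H-\Delta',H-\Delta''$ of $\mathcal{TS}^{op}$ allowed by (\ref{technical}), both that the relations genuinely force all of (i), (ii), (iii) to be functions of the two chosen coordinates and that they impose no further constraint, so that $W$ is all of $\mathbb{A}^2$ and not a proper closed subscheme. This is the case analysis deferred to \cite{QZ}; its conceptual content is merely that $\pi^*(-)$ is trivial on $E$ while $(-E)|_E\cong\mathcal{O}_{\mathbb{P}^1}(1)$, so the fibre data is a point of the affine cone over $E\cong\mathbb{P}^1$, i.e.\ of $\mathbb{A}^2$.
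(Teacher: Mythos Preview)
Your opening reduction to the three families (i), (ii), (iii) matches the paper exactly, and your closing paragraph correctly identifies that the real work is a case analysis over the finitely many shapes of $\mathcal{TS}^{op}$ permitted by (\ref{technical}). The paper's proof \emph{is} that case analysis: it enumerates every toric system that can arise, writes down the type (i)/(ii)/(iii) arrows explicitly, and checks by hand which two determine the rest. No uniform argument is given or claimed.

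Where your sketch goes wrong is in the middle step, and the errors are not just bookkeeping. First, your treatment of family (iii) misreads Lemma~\ref{cross arrow}: that lemma decomposes $H^0(S,B_i+\cdots+B_{j-1})$, i.e.\ the path space from $i$ to $j$ in $Q_0$, whereas a type (iii) arrow in $Q$ is an irreducible section of $H^0(S,\pi^*(B_i+\cdots+B_{j-1})-E)$. These arrows are \emph{not} forced to have value $0$ or $1$; the paper's tables list many cases (e.g.\ $\{H,H-E-E_1,E,E_1-E,H-E_1\}$ in Part~1 Case~2) where a type (iii) arrow has value a genuine linear function of the two free parameters. Relatedly, the expression ``$u_iu_j$'' does not make sense as a path in $Q$: $u_i$ terminates at $k'$ while $u_j$ originates at $k$, so they do not compose.

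Second, the restriction-to-$E$ heuristic for families (i) and (ii) is suggestive but does not by itself produce relations among the \emph{values} $r_a$. Two type (i) arrows with different sources live in different cohomology groups, and the relations that tie their values together come from composing with arrows whose values are already pinned to $0$ or $1$ and then invoking a relation in a larger $H^0$; which such relation is available depends on the toric system. Likewise, your ``cross relations of the shape (arrow into $k)\cdot u_j=u_i\cdot(\text{arrow out of }k')$'' cannot be written as stated because the two sides are paths with different intermediate vertices; in the actual case analysis the identification of the (i)-parameters with the (ii)-parameters goes through type (iii) arrows or through specific products in carefully chosen $H^0$'s (see the worked example $\{E_1,H-E-E_1,E,H-E,H-E_1\}$ in the paper's Part~2). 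So the ``conceptual'' step you propose does not stand on its own, and the paper does not attempt one: it simply carries out the enumeration.
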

    \begin{proof}
    Recall the points we blow up on $\mathbb{P}^2$ are in general position, hence we can prescribe coordinates to them without loss of generality. For the sake of presentation, we will provide full detail in the first few cases, then provide a list of three types of arrows and their relations in the other cases. The reader should be able to fill in the details with ease.\\

    \noindent\textbf{Part 1.} Suppose the two line divisors ($H-\cdots$) are on same side of $E$ in $\mathcal{TS}^{op}$. Without loss of generality, we can assume they are both on the left of $E$. \\
    Case 1: The second line divisor is of the form $H-E$. 
    If $\mathcal{TS}^{op}=\{H,H-E,E,H-E\}$. In this case, the undertermined arrows are of type i. Now $h^0(S,H-E)=2$. $h^0(S,2H-E)=5$ and by Lemma \ref{surj23} all arrows from vertex $1$ to vertex $3$ are decomposible. So the value of the two arrows from $2$ to $3$ determines the representation. Clearly the possible values forms an algebraic set
    $\mathbb{A}^2$.\ 
    
    If $\mathcal{TS}^{op}=\{E_1,H-E_1,H-E,E,H-E-E_1\}$,then $k=4$. In this case, the undermined arrows are of type i. Again $h^0(S,H-E)=2$. Note $B_1+B_2=E_1+(H-E_1)=H$, hence using the same argument as above, we see all arrows from vertex $1$ to vertex $4$ are decomposible. Since $E_1-E$ is not effective, by Lemma \ref{surj22}, all arrows from $2$ to $4$ are decomposible, thus the representation is determined by the value of the two arrows $a,b$ from $3$ to $4$.
    
    If $\mathcal{TS}^{op}=\{E_2,E_1-E_2,H-E_1,H-E,E,H-E-E_1-E_2\}$, we can apply the arguments in the previous subcase.\ 
    
    If $\mathcal{TS}^{op}=\{E_1-E_2,E_2,H-E_1-E_2,H-E,E,H-E-E_1\}$. Now $h^0(S,B_4)=2$. $h^0(S,B_3)=h^0(S,H-E_1-E_2)=1$. Also, $h^0(S,B_3+B_4)=3$. We see the unique arrow from $3$ to $4$ composed with the two arrows from $4$ to $5$ gives two distinct arrows from $3$ to $5$, thus there is an indecomposible arrow $c$ from $3$ to $5$.\ 
   Since $P$ is not on $E_1$, then $e_1\circ c$ represents an element in $H^0(S,H-E_1+H-E)$. Note $r_{e_i}=1$, then by Lemma \ref{surj22}, $c\circ e_1$ can be decomposed as composition of arrows from $1$ to $4$ with arrows from $4$ to $5$,
    the value of $r_c$ is again a fixed function of the two values of sections in $H^0(S,B_4)$.\\
    For arrows from $1,2$ to $5$, we use Lemma \ref{surj22} to see all of them must be decomposible. So the values of the two arrows from $4$ to $5$ gives the desired $\mathbb{A}^2$.\ 
    
    If $\mathcal{TS}^{op}=\{E_1-E_2,E_2-E_3,E_3,H-E_1-E_2-E_3,H-E,E,H-E-E_1\}$. In this case we only have arrows of type i. There are two arrows of type i from $5$ to $6$, which we call $f_1$ and $f_2$ and two arrows of type i from $4$ to $6$, which we call $g_1$ and $g_2$. We need to show there are two independent values among these four arrows. We can  assume without loss of generality that $E_1,E_2,E_3,E$ is obtained from blowing up $[1:0:0],[0:1:0],[0:0:1],[1,1,1]$ respectively. We can let $f_1$ represent the line $x-z$, $f_2$ represent the line $y-z$. $g_1$ represent the quadric $x(y-z)$ and $g_2$ represent the quadric $y(x-z)$. We let $u_{04}$ represent the line $x$, $u_{14}$ represent $y$ and $u_{24}$ represent the line $z$, $u_4$ represent $x$ and $u_3$ represent $yx$. By looking at arrow from $1,2,3$ to $6$, we obtain three relations:
    \begin{align*}
        f_2\circ u_{15}&=g_1\circ e_1\\
        f_1\circ u_{25}&=g_2\circ e_2\\
        (f_1-f_2)\circ u_{35}&=(g_2-g_1)\circ e_3
    \end{align*}
    From the first two equations, we see the values of $g_1,g_2$ are determined by those of $f_1$ and $f_2$ since $r_{e_2}\neq 0$,$r_{e_3}\neq 0$. It remains to check the third equation does not give extra information on the values of $f_1,f_2,g_1,g_2$. Using the following relations
    \begin{align*}
        u_5\circ u_{25}&=u_4\circ e_2\\
        u_4\circ e_1&=u_5\circ u_{15}+e\circ(f_2-f_1)\circ e\\
        u_5\circ u_{35}&=u_4\circ e_3-e\circ g_1\circ e_3
    \end{align*}
    and the fact that $r_e=0$, we obtain
    \begin{align*}
        (r_{f_1}-r_{f_2})r_{u_{35}}&=r_gr_{e_2}r_{u_{35}}/r_{u_{25}}-r_{g_1}r_{e_1}r_{u_{35}}/r_{u_{15}}\\
        &=r_{g_2}r_{u_5}r_{u_{35}}/r_{u_4}-r_{g_1}r_{u_5}r_{u_{35}}/r_{u_4}\\
        &=(r_{g_2}-r_{g_1})r_{e_3}
    \end{align*}
    hence the third equation is always true regardless of the values of $f_1,f_2$ so the value of $f_1$ and $f_2$ determines the representation.\\
    For the rest of the cases in Part 1, we will present them in diagrams.

    Case 2: The second line divisor is of the form $H-E-E_1$.\\
    
    \begin{table}[h]
    \begin{center}
\begin{tabular}{|l|l|l|l|l|}
\hline
$\mathcal{TS}^{op}$           & Type i             & Type ii                  & Type iii   & Relation                \\
\hline

$\{H-E_1,E_1,H-E-E_1$ & $a:2\to 4$& $\emptyset$ &$\emptyset$ & \\
$E,H-E\}$ &$b:3\to4$&&&\\
\hline
$\{H-E_1-E_2,E_2,E_1-E_2,$ & $a:3\to 5$& $\emptyset$ &$\emptyset$ & \\
$H-E-E_1,E,H-E\}$ &$b:4\to5$&&&\\
\hline

$\{E_2,H-E_1-E_2,E_1,$ & $a:3\to 5$& $\emptyset$ &$\emptyset$ & \\
$H-E-E_1,E,H-E-E_2\}$ &$b:4\to5$&&&\\
\hline

$\{E_3,E_2-E_3,H-E_1-E_2,E_1,$ & $a:4\to 6$& $\emptyset$ &$\emptyset$ & \\
$H-E-E_1,E,H-E-E_2-E_3\}$ &$b:5\to6$&&&\\
\hline

$\{E_3,H-E_1-E_2-E_3,E_2$ & $a:4\to 6$& $\emptyset$ &$\emptyset$ & \\
$E_1-E_2,H-E-E_1,E$ &$b:5\to6$&&&\\
$H-E-E_3\}$&&&&\\
\hline

$\{E_2-E_3,E_3,H-E_1-E_2-E_3,$ & $a:3\to 6$& $\emptyset$ &$\emptyset$ &$c$ determined  \\
$E_1,H-E-E_1,E,H-E-E_2\}$ &$b:4\to6$&&&by $a,b$\\
&$c:5\to 6$&&&\\
\hline

$\{H-E_1-E_2-E_3,E_3,E_2-E_3$ & $a:3\to 6$& $\emptyset$ &$\emptyset$ &$c$ determined  \\
$E_1-E_2,H-E-E_1,E$ &$b:4\to6$&&&by $a,b$\\
$H-E\}$&$c:5\to 6$&&&\\
\hline

$\{H,H-E-E_1,E,$ & $a:1\to 3$& $\emptyset$ & $c:2\to 4$ & $c$
determined\\
$E_1-E,H-E_1\}$ &$b:2\to3$&&&by $a,b$\\
\hline
$\{E_2,H-E_2,H-E-E_1,$           &       $a:2\to4$             &                $\emptyset$          &  $c:3\to 5$      &                   $c$
determined      \\
    $E,E_1-E,H-E_1-E_2\}$                          &             $b:3\to 4$       &                          &            &by $a,b$\\
\hline
$\{E_3,E_2-E_3,H-E_2,$&$a:4\to 5$&$\emptyset$&$d:4\to 6$& All determined\\
$H-E-E_1,E,E_1-E,$ & $b:3\to 5$&&&by $a,b$\\
$H-E_1-E_2-E_3\}$&$c:2\to 5$ &&&\\
\hline
$\{H-E_1-E_2-E_3,E_3,E_2-E_3,$&$a:3\to 6$&$\emptyset$& $\emptyset$&\\
$E_1-E_2,H-E-E_1,E,$ & $b:4\to 6$&&&\\
$H-E\}$&$c:5\to 6$ &&&\\
\hline
\end{tabular}
\end{center}
\end{table}
\pagebreak

 Case 3: The second line divisor is of the form $H-E-E_1-E_2$.\\
        
    \begin{table}[h]
    \begin{center}
\begin{tabular}{|l|l|l|l|l|}
\hline
$\mathcal{TS}^{op}$           & Type i             & Type ii                  & Type iii   & Relation                \\
\hline
$\{H-E_1,E_1-E_2,E_2$ & $a:2\to 5$& $\emptyset$ & $\emptyset$ &\\
$H-E-E_1-E_2,E,H-E\}$ &$b:3\to5$&&&\\
\hline
$\{E_3,H-E_1-E_3,E_1-E_2,E_2,$           &       $a:4\to6$             &                $\emptyset$          &  $\emptyset$      &                 \\
    $H-E-E_1-E_2,E,H-E-E_3\}$                          &             $b:3\to 6$       &                          &            &\\
\hline

$\{E_4,E_3-E_4,H-E_1-E_3$           &       $a:4\to7$             &                $\emptyset$          &  $\emptyset$      &                 \\
    $E_1-E_2,E_2,H-E-E_1-E_2$                          &             $b:5\to 7$       &                          &            &\\
    $E,H-E-E_3-E_4\}$&&&&\\
\hline

$\{E_3-E_4,E_4,H-E_1-E_3-E_4$           &       $a:4\to7$             &                $\emptyset$          &  $\emptyset$      &$c$ determined                 \\
    $E_1-E_2,E_2,H-E-E_1-E_2$                          &             $b:5\to 7$       &                          &            &by $a,b$\\
    $E,H-E-E_3-E_4\}$&$c:3\to 7$&&&\\
\hline

$\{H,H-E-E_1-E_2,E,$&$a:1\to 3$&$\emptyset$&$d:2\to 4$& All determined \\
$E_1-E,E_2-E_1,H-E_2\}$ & $b:1\to 3$&&$e:2\to 5$&by $a,b$\\
&$c:1\to 3$&&&\\
\hline

$\{E_3,H-E_3,H-E-E_1-E_2,$&$a:2\to 4$&$\emptyset$&$d:3\to 5$& All determined \\
$E,E_1-E,E_2-E_1$ & $b:2\to 4$&&$e:3\to 6$&by $a,b$\\
$H-E_2-E_3\}$&$c:1\to 4$&&&\\
\hline

$\{E_4,E_3-E_4,H-E_3,$&$a:2\to 5$&$\emptyset$&$e:4\to 6$& All determined \\
$H-E-E_1-E_2,E,E_1-E,$ & $b:2\to 5$&&$f:4\to 7$&by $a,b$\\
$E_2-E_1,H-E_2-E_3-E_4\}$&$c,d:3\to 5$&&&\\
\hline

$\{E_3-E_4,E_4,H-E_3-E_4,$&$c:1\to 5$&$\emptyset$&$a:4\to 6$& All determined \\
$H-E-E_1-E_2,E,E_1-E,$ & $d:2\to 5$&&$b:4\to 7$&by $a,b$\\
$E_2-E_1,H-E_2\}$&$f:3\to 5$&&&\\
\hline

$\{E_3-E_4,E_4-E_5,E_5,$&$a:1\to 6$&$\emptyset$&$d:5\to 7$& All determined \\
$H-E_3-E_4-E_5,H-E-E_1-E_2,$ & $b:2\to6$&&$e:5\to 8$&by $a,b$\\
$E,E_1-E,E_2-E_1,H-E_2\}$&$c:3\to 6$&&$f:4\to 7$&\\
&&&$g:4\to 7$&\\
\hline

$H-E_2,E_2,H-E-E_1-E_2$&$a:1\to 4$&$\emptyset$& $g:3\to 5$&All determined\\
$E,E_1-E,H-E_1$&$b:2\to 4$&&&by $a,b$\\
\hline 
$E_3,H-E_2-E_3,E_2,$&$a:2\to5$&$\emptyset$ &$c:4\to6$&All determined\\
$H-E-E_1-E_2,E,$&$b:3\to5$&&&by $a,b$\\
$E_1-E,H-E_1-E_3$&&&&\\
\hline
$\{E_4, E_3-E_4,H-E_2-E_3,$&$a:2\to6$& $\emptyset$&$c:5\to 7$&All determined\\
$E_2,H-E-E_1-E_2,E,$& $b:3\to 6$&&&by $a,b$\\
$E_1-E,H-E_1-E_3-E_4\}$ & $c:4\to 6$ &&&\\
\hline

$\{H-E_2-E_3,E_3,E_2-E_3,$&$a:1\to5$&&$c:3\to 6$&All determined\\
$H-E-E_1-E_2,E,E_1-E,$& $b:2\to 5$&&$d:4\to 6$&by $a,b$\\
$H-E_1\}$ &&&&\\
\hline

$\{E_4,H-E_2-E_3-E_4,E_3,$&$a:2\to6$&&$c:4\to 7$&All determined\\
$E_2-E_3,H-E-E_1-E_2,E,$& $b:3\to 6$&&$d:5\to 7$&by $a,b$\\
$E_1-E,H-E_1\}$ &&&&\\
\hline

$\{E_3-E_4,E_4,H-E_2-E_3-E_4,$&$a:3\to6$&&$c:5\to 7$&All determined\\
$E_2,H-E-E_1-E_2,E,$& $b:4\to 6$&&&by $a,b$\\
$E_1-E,H-E_1-E_3\}$ &&&&\\
\hline
\end{tabular}
\end{center}
\end{table}

    \begin{table}[h]
    \begin{center}
\begin{tabular}{|l|l|l|l|l|}
\hline
$\mathcal{TS}^{op}$           & Type i             & Type ii                  & Type iii   & Relation                \\
\hline

$\{H-E_2-E_3-E_4,E_4,E_3-E_4,$&$a:1\to6$&&$c:3\to 7$&All determined\\
$E_2-E_3,H-E-E_1-E_2,E,$& $b:2\to 6$&&$d:4\to 7$&by $a,b$\\
$E_1-E,H-E_1\}$ &&&$e:5\to 7$&\\
\hline

\end{tabular}
\end{center}
\end{table}
    Case 4: The second line divisor is of the form $H-E_1$ where $E_1\neq E$.\\
    
            \begin{table}[h]
        \begin{center}
    \begin{tabular}{|l|l|l|l|l|}
    \hline
    $\mathcal{TS}^{op}$           & Type i             & Type ii                  & Type iii   & Relation                \\
    \hline
    $\{H,H-E_1,E_1-E$ & $a:2\to 4$& $\emptyset$ & $\emptyset$ &\\
    $E,H-E-E_1\}$ &$b:2\to4$&&&\\
    \hline
    $\{H,H-E_1,E_1-E_2$ & $a:2\to 5$& $\emptyset$ & $\emptyset$ &\\
    $E_2-E,E,H-E-E_1-E_2\}$ &$b:2\to5$&&&\\
    \hline
    $\{E_2,H-E_2,H-E_1,$           &       $a:3\to5$             &                $\emptyset$          &  $\emptyset$      &                 \\
        $E_1-E,E,H-E_1-E_2-E\}$                          &             $b:3\to 5$       &                          &            &\\
    \hline
    $\{E_2-E_3,E_3,H-E_2-E_3,$ & $a:4\to 6$& $\emptyset$ & $\emptyset$ &$c$ determined\\
    $H-E_1,E_1-E,E,$&$b:4\to 6$&&&by $a,b$\\
    $H-E_1-E_2-E\}$ &$c:3\to6$&&&\\
    \hline
    $\{E_2-E_3,E_3-E_4,E_4,$ & $a:4\to 7$& $\emptyset$ & $\emptyset$ &$a,b$ determined\\
    $H-E_2-E_3-E_4,H-E_1,E_1-E$&$b:4\to 7$&&&by $c,d$\\
    $E,H-E_1-E_2-E\}$ &$c:5\to7$&&&\\
    &$d:5\to 7$&&&\\
    \hline

    \end{tabular}
    \end{center}
    \end{table}
    Case 5: The second line divisor is of the form $H-E_1-E_2$ where $E_1$,$E_2$ are different from $E$.\ In all the subcases, there will be no arrows of type ii or iii, so we only need to analyze arrows of type i.\ 
    
            \begin{table}[h]
    \begin{center}
\begin{tabular}{|l|l|l|l|l|}
\hline
$\mathcal{TS}^{op}$           & Type i             & Type ii                  & Type iii   & Relation                \\
\hline
$\{H-E_2,E_2,H-E_1-E_2,$ & $a:2\to 5$& $\emptyset$ & $\emptyset$ &\\
$E_1-E,E,H-E_1-E\}$ &$b:3\to5$&&&\\
\hline
$\{H-E_2,E_2,H-E_1-E_2,$           &       $a:2\to6$             &                $\emptyset$          &  $\emptyset$      &                 \\
    $E_1-E_4,E_4-E,E$                          &             $b:3\to 6$       &                          &            &\\
$H-E_1-E-E_4\}$&&&&\\
\hline
$\{E_3,H-E_2-E_3,E_2$           &       $a:3\to6$             &                $\emptyset$          &  $\emptyset$      &                 \\
    $H-E_1-E_2,E_1-E,E$                          &             $b:4\to 6$       &                          &            &\\
$H-E_1-E-E_3\}$&&&&\\
\hline
$\{H-E_2-E_3,E_3,E_2-E_3,$           &       $a:3\to6$             &                $\emptyset$          &  $\emptyset$      &                 \\
    $H-E_1-E_2,E_1-E,E,$                          &             $b:4\to 6$       &                          &            &\\
$H-E-E_1\}$&&&&\\
\hline
$\{H-E_2-E_3,E_3,E_2-E_3,$           &       $a:3\to7$             &                $\emptyset$          &  $\emptyset$      &                 \\
    $H-E_1-E_2,E_1-E_4,E_4-E,$                          &             $b:4\to 7$       &                          &            &\\
$E,H-E-E_1-E_4\}$&&&&\\
\hline
$\{H-E_2-E_3-E_4,E_4,E_3-E_4$           &       $a:3\to7$             &                $\emptyset$          &  $\emptyset$      &$c$ determined              \\
    $E_2-E_3,H-E_1-E_2,E_1-E,$                          &             $b:4\to 7$       &                          &            &by $a,b$\\
$E,H-E-E_1\}$&$c:5\to 7$&&&\\
\hline
\end{tabular}
\end{center}
\end{table}

    \begin{table}[h]
    \begin{center}
\begin{tabular}{|l|l|l|l|l|}
\hline

$\{H-E_2-E_3-E_4,E_4,E_3-E_4$           &       $a:3\to8$             &                $\emptyset$          &  $\emptyset$      &$c$ determined              \\
    $E_2-E_3,H-E_1-E_2,E_1-E_5,$                          &             $b:4\to 8$       &                          &            &by $a,b$\\
$E_5-E,H-E-E_1-E_5\}$&$c:5\to 8$&&&\\
\hline
\end{tabular}
\end{center}
\end{table}
\pagebreak
    Case 6: The second line divisor is of the form $H-E_1-E_2-E_3$ where $E_1,E_2,E_3$ are different from $E$.\ 
        
            \begin{table}[h]
    \begin{center}
\begin{tabular}{|l|l|l|l|l|}
\hline
$\mathcal{TS}^{op}$           & Type i             & Type ii                  & Type iii   & Relation                \\
\hline
$\{H-E_2,E_2-E_3,E_3,$&$a:2\to 6$& $\emptyset$ & $\emptyset$ &\\
$H-E_1-E_2-E_3,E_1-E,E,$ &$b:3\to6$&&&\\
$H-E_1-E\}$&&&&\\
\hline
$\{H-E_2,E_2-E_3,E_3,$&$a:2\to 7$& $\emptyset$ & $\emptyset$ &\\
$H-E_1-E_2-E_3,E_1-E_4,E_4-E,$ &$b:3\to7$&&&\\
$E,H-E_1-E_4-E\}$&&&&\\
\hline
$\{E_4,H-E_2-E_4,E_2-E_3,$           &       $a:3\to7$             &                $\emptyset$          &  $\emptyset$      &                 \\
    $E_3,H-E_1-E_2-E_3,E_1-E,$                          &             $b:4\to 7$       &                          &            &\\
$H-E_1-E-E_4\}$&&&&\\
\hline
$\{E_4-E_5,E_5,H-E_2-E_4-E_5,$           &       $a:3\to8$             &                $\emptyset$          &  $\emptyset$      &    All determined        \\
    $E_2-E_3,E_3,H-E_1-E_2-E_3,$                          &             $b:4\to 8$       &                          &            &by $a,b$\\
$E_1-E,E,H-E_1-E-E_4\}$&$c:5\to8$&&&\\
\hline

\end{tabular}
\end{center}
\end{table}
    \noindent\textbf{Part 2.} Suppose the two line divisors are on different side of $E$. We will use symmetry to reduce the case discussed.\
    
    We first consider the cases when $E$ is adjacent to the two line divisors in the toric system. The easiest case is $\mathcal{TS}^{op}=\{H-E,E,H-E,H\}$. This comes from blowing up a point on $\mathbb{P}^2$. Without loss of generality, we assume $P=[0:0:1]$. Then the two sections of $H-E$ are $x,y$. We denote the two arrows from $1$ to $2$ by $x_1$,$y_1$, the unique indecomposible arrow from $1$ to $2'$ by $z_1$, the unique indecomposible arrow from $2$ to $3$ by $z_2$ and the two arrows from $2'$ to $3$ by $x_2$, $y_2$. Note $r_{z_1}=r_{z_2}=1$.Moreover, we have the relation:
    \begin{align*}
        x_1z_2&=x_2z_1\\
        y_1z_2&=y_2z_1
    \end{align*}
    Thus the values of $x_1$,$y_1$ determine the value of $x_2$,$y_2$. By Lemma \ref{surj23}, there are no arrows of type iii. So the value of $x_1,x_2$ provides the desired $\mathbb{A}^2$.\ 
    
    If $\mathcal{TS}^{op}=\{E_1,H-E-E_1,E,H-E,H-E_1\}$. There are two arrows of type i, one is from $1$ to $3$, which we call $f_2$, another is from $2$ to $3$, which we call $f_1$. Then $f_1$ represents the unique section of $H-E-E_1$.There are two arrows from $3'$ to $4$ of type ii. We denote them by $f_2$, $g_2$. Note $f_2$, $g_2$ also represents sections of $H-E$. Moreover, $u_1,u_4$ both represent sections in $H^0(S,H)$ which do not lie in the subspace $H^0(S,H-E)$, we let them represent the same section. So we can choose $f_2$, $g_2$ so that
    \begin{align*}
        f_2\circ u_1&=u_4\circ f_1\circ e_1\\
        g_2\circ u_1&=u_4\circ g_1
    \end{align*}
    Thus the value of $f_1$, $g_1$ determines the value of $f_2$,$g_2$. Moreover, it is easy to check there are no type iii arrows from $1$ to $4$. For arrows from $2$ to $4$, one can easily check $f_2\circ e \circ f_1, g_2\circ e\circ f_1,u_3\circ f_1,g_2\circ u_1$ spans all the arrows from $2$ to $4$, so there are no type iii arrows. 
        Hence the value of $f_1,g_1$ gives the desired $\mathbb{A}^2$.\ 
       \begin{table}[h]
    \begin{center}
\begin{tabular}{|l|l|l|l|l|}
\hline
$\mathcal{TS}^{op}$           & Type i             & Type ii                  & Type iii   & Relation                \\
\hline
$\{E_2,E_1-E_2,H-E-E_1,$           &       $a:2\to4$             &                $c:4'\to 5$          &  $\emptyset$      &         All determined        \\
$E,H-E,H-E_1-E_2\}$&$b:3\to 4$&$d:4'\to 5$&&by $a,b$\\
\hline
$\{E_2,E_1-E_2,H-E-E_1,$           &       $a:2\to4$             &                $c:4'\to 5$          &  $\emptyset$      &         All determined        \\
$E,H-E-E_3,E_3$&$b:3\to 4$&$d:4'\to 6$&&by $a,b$\\
$H-E_1-E_2-E_3\}$&&&&\\
\hline
$\{E_1-E_2,E_2,H-E-E_1-E_2,$&$a:1\to 4$& $c:4'\to 5$ & $e:3\to 5$ &All determined\\

$E,H-E,H-E_1\}$&$b:2\to 4$&$d:4'\to 5$&&by $a,b$\\
\hline

$\{E_1-E_2,E_2,H-E-E_1-E_2,$&$a:1\to 4$& $c:4'\to 5$ & $e:3\to 5$ &All determined\\

$E,H-E-E_3,E_3$&$b:2\to 4$&$d:4'\to 6$&&by $a,b$\\
$H-E_1-E_3\}$&&&&\\
\hline

$\{E_1-E_2,E_2,H-E-E_1-E_2,$&$a:1\to 4$& $c:4'\to 5$ & $e:3\to 5$ &All determined\\

$E,H-E-E_3,E_3-E_4$&$b:2\to 4$&$d:4'\to 6$&$f:3\to 6$&by $a,b$\\
$E_4, H-E_1-E_3-E_4\}$&&&&\\
\hline

$\{E_1-E_2,E_2,H-E-E_1-E_2,$&$a:1\to 4$& $c:4'\to 6$ & $e:3\to 5$ &All determined\\

$E,H-E-E_3-E_4,E_4$&$b:2\to 4$&$d:4'\to 7$&&by $a,b$\\
$E_3-E_4,H-E_1-E_3\}$&&&&\\
\hline

$\{E_3,E_2-E_3,E_1-E_2,$&$a:2\to 5$& $d:5'\to 6$ & $\emptyset$ &All determined \\
$H-E_1-E,E,H-E,$ &$b:3\to5$&$e:5'\to 6$&&by $a,b$\\
$H-E_1-E_2-E_3\}$&$c:4\to 5$ &&&\\
\hline
$\{E_1,H-E-E_1,E,,$           &       $a:1\to3$             &                $c:3'\to 4$          &  $\emptyset$      &         All determined        \\
$H-E-E_2,E_2,H-E_1-E_2\}$&$b:2\to 3$&$d:3'\to 5$&&by $a,b$\\
\hline

\end{tabular}
\end{center}
\end{table}
\\
    
    Now we consider the case when $E,E_1-E$ is a segment of the toric system. Note this also covers the possibility of $E_1-E,E$ by symmetry.\ 
    
           \begin{table}[h]
    \begin{center}
\begin{tabular}{|l|l|l|l|l|}
\hline
$\mathcal{TS}^{op}$           & Type i             & Type ii                  & Type iii   & Relation                \\
\hline
$\{H-E_1-E,E,E_1-E,,$           &       $c:1\to 2$             &                $a:2'\to 4$          &  $d:1\to 3$      &         All determined        \\
$H-E_1,H\}$&&$b:2'\to 4$&&by $a,b$\\
\hline
$\{H-E_1-E,E,E_1-E,$&$c:1\to 2$& $a:2'\to 4$ & $d:1\to 3$ &All determined\\

$H-E_1-E_2,E_2,H-E_2\}$&&$b:2'\to 5$&&by $a,b$\\
\hline

$\{E_2,H-E-E_1-E_2,E,$&$c:1\to 3$& $a:3'\to 5$ & $d:2\to 4$ &All determined\\

$E_1-E,H-E_1,H-E_2\}$&&$b:3'\to 5$&&by $a,b$\\
\hline

$\{E_2,H-E-E_1-E_2,E,$&$c:1\to 3$& $a:3'\to 5$ & $d:2\to 4$ &All determined\\

$E_1-E,H-E_1-E_3,E_3,$&&$b:3'\to 6$&&by $a,b$\\
$H-E_2\}$&&&&\\
\hline

$\{E_2,H-E-E_1-E_2,E,$&$c:1\to 3$& $a:3'\to 6$ & $d:2\to 4$ &All determined\\

$E_1-E,H-E_1-E_3-E_4,E_4,$&&$b:3'\to 7$&$e:2\to 5$&by $a,b$\\
$E_3-E_4,H-E_2-E_3\}$&&&&\\
\hline

$\{E_2,H-E-E_1-E_2,E,$&$c:1\to 3$& $a:3'\to 5$ & $d:2\to 4$ &All determined\\

$E_1-E,H-E_1-E_3,E_3-E_4,$&&$b:3'\to 6$&&by $a,b$\\
$E_4,H-E_2-E_3-E_4\}$&&&&\\
\hline
\end{tabular}
\end{center}
\end{table}

    \begin{table}[h]
    \begin{center}
\begin{tabular}{|l|l|l|l|l|}
\hline

$\{H-E-E_1,E,E_1-E,$&$c:1\to 2$& $a:2'\to 5$ & $d:1\to 3$ &All determined\\

$H-E_1-E_2-E_3,E_3,E_2-E_3,$&&$b:2'\to 6$&$e:1\to 4$&by $a,b$\\
$H-E_2\}$&&&&\\
\hline

$\{H-E-E_1,E,E_1-E,$&$c:1\to 2$& $a:2'\to 4$ & $d:1\to 3$ &All determined\\

$H-E_1-E_2,E_2-E_3,E_3,$&&$b:2'\to 5$&&by $a,b$\\
$H-E_2-E_3\}$&&&&\\

\hline

$\{H-E-E_1,E,E_1-E,$&$c:1\to 2$& $a:2'\to 4$ & $d:1\to 3$ &All determined\\

$H-E_1-E_2,E_2-E_3,E_3-E_4,$&&$b:2'\to 5$&&by $a,b$\\
$E_4,H-E_2-E_3\}$&&$e:2'\to6$&&\\
\hline

\end{tabular}
\end{center}
\end{table}
    
\pagebreak
    Lastly we consider the case when $E,E_1-E,E_2-E_1$ is a segment of the toric system. Note this also covers the case of $E_2-E_1,E_1-E,E$ by symmetry.\

           \begin{table}[h]
    \begin{center}
\begin{tabular}{|l|l|l|l|l|}
\hline
$\mathcal{TS}^{op}$           & Type i             & Type ii                  & Type iii   & Relation                \\
\hline
$\{H-E-E_1-E_2,E,E_1-E,$           &     $\emptyset$           &                $c:2'\to 5$          &  $a:1\to 3$      &         All determined        \\
$E_2-E_1,H-E_2,H\}$&&$d:2'\to 5$&$b:1\to 4$&by $a,b$\\
\hline
$\{H-E-E_1-E_2,E,E_1-E,,$&$\emptyset$& $a:2'\to 5$ & $c:1\to 3$ &All determined\\

$E_2-E_1,H-E_2-E_3,E_3,$&&$b:2'\to 6$&$d:1\to 4$&by $a,b$\\
$H-E_3\}$&&&&\\
\hline

$\{H-E-E_1-E_2,E,E_1-E,,$&$\emptyset$& $a:2'\to 5$ & $c:1\to 3$ &All determined\\

$E_2-E_1,H-E_2-E_3,E_3-E_4,$&&$b:2'\to 6$&$d:1\to 4$&by $a,b$\\
$E_4,H-E_3-E_4\}$&&&&\\
\hline

$\{H-E-E_1-E_2,E,E_1-E,,$&$\emptyset$& $a:2'\to 5$ & $c:1\to 3$ &All determined\\

$E_2-E_1,H-E_2-E_3,E_3-E_4,$&&$b:2'\to 6$&$d:1\to 4$&by $a,b$\\
$E_4-E_5,E_5,H-E_3-E_4-E_5\}$&&&&\\
\hline

$\{H-E-E_1-E_2,E,E_1-E,,$&$\emptyset$& $a:2'\to 6$ & $c:1\to 3$ &All determined\\

$E_2-E_1,H-E_2-E_3-E_4,E_4,$&&$b:2'\to 7$&$d:1\to 4$&by $a,b$\\
$E_3-E_4,H-E_3\}$&&&$e:1\to 5$&\\
\hline

\end{tabular}
\end{center}
\end{table}
By now we have discussed all possible cases, and this concludes the proof.
    \end{proof}

    \begin{proof}[Proof of Theorem \ref{except locus}]
      By our reduction before, we see 
      \begin{equation*}
        C=\mathbf{V}(e)^{S}//\mathrm{PGL(\mathbf{1})}=\mathbf{V}(e,u_1-1,\ldots,u_n-1)^{S}//\mathbf{k}^*
      \end{equation*}
      We note $C\neq \emptyset$ implies $\mathbf{V}(e,u_1-1,\ldots,u_n-1)^{S}\neq \emptyset$.
      Let $a$,$b$ be the two arrows in the above lemma. We claim if $R\in\mathbf{V}(e,u_1-1,\ldots,u_n-1)^{S}$satisfies $r_a=r_b=0$, $R$ is not $\theta$-stable. For such an $R$, we note it satisfies
      \begin{enumerate}
          \item For any $i<k$, all arrows in from $i$ to $k$ have value $0$.
          \item For any $j>k$, all arrows from $k'$ to $j$ have value $0$.
          \item For any $i<k<j$, all arrows from $i$ to $j$ have value $0$. 
      \end{enumerate}
      Hence we can use the subrepresentation of $R$ as in Prop \ref{to a point2} and show $R$ is not stable.\\
      For any other arrow $g$ of type i,ii or iii , we realize from the proof of the above lemma, that
      $$r_g=m_ar_a+m_br_b$$
      where $m_a,m_b\in\C$ and depends only on $(Q,I)$. Since the stability of such a representation depends only on the vanishing of the value of these finite number of arrows (Lemma \ref{subrep}), we see $\mathbf{V}(e,u_1-1,\ldots,u_n-1)^{S}$ is of the complement of a finite collection of lines through origin in $\mathbb{A}^2$.
      If the collection of lines is nonempty, then $C=\mathbf{V}(e,u_1-1,\ldots,u_n-1)^{S}//\mathbf{k}^*$ is the complement of finite nonzero number of points in $\mathbb{P}^1$, which contradicts the fact that $C$ is proper. Hence the collection of lines is empty and $C\cong\mathbb{P}^1$.
    \end{proof}
    \begin{cor}
      Let $a,b$ be as above. A representation $R\in \mathbf{V}(e)$ satisfying $r_{u_i}\neq 0$ for all $i$ is unstable if and only if $r_a=r_b=0$.
    \end{cor}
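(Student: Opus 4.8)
The plan is to read the corollary off from the proof of Theorem~\ref{except locus}, where essentially all of the needed analysis has already been done; the two implications are just the two ends of the identification $W=\mathbf{V}(e,u_1-1,\ldots,u_n-1)\cong\mathbb{A}^2$ of Lemma~\ref{key}.

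First I would reduce an arbitrary $R\in\mathbf{V}(e)$ with $r_{u_i}\neq 0$ for all $i$ to normal form: applying the element $g_\lambda\in\PGL$ used in the proof of Theorem~\ref{except locus} we may assume $r_{u_i}=1$ for every $i$, so that $g_\lambda\cdot R$ is a point of $W$. This element is diagonal, so the value of each of the two distinguished arrows $a,b$ is merely multiplied by a nonzero scalar; in particular the conditions $r_a=0$ and $r_b=0$ are unaffected by the normalisation, and since $\theta$-stability is $\PGL$-invariant, $R$ is unstable if and only if $g_\lambda\cdot R$ is. Thus it suffices to treat $R$ already in normal form, which I identify with its coordinate point $(r_a,r_b)\in\mathbb{A}^2$.

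If $r_a=r_b=0$, then by Lemma~\ref{key} every arrow $g$ of type i, ii or iii has $r_g=m_ar_a+m_br_b=0$; together with the description of the remaining arrows this forces every arrow from a vertex $<k$ to $k$, every arrow from $k'$ to a vertex $>k$, and every arrow from a vertex $<k$ to a vertex $>k$ to vanish, which are exactly conditions~(1)--(3) in the proof of Proposition~\ref{to a point2}. Hence the subrepresentation $S\subset R$ with $\dim S_l=1$ for $l\in\{1,\dots,k-1,k'\}$ and $\dim S_l=0$ otherwise is well defined by Lemma~\ref{subrep}, and one computes $\theta(S)=-1<0$, so $R$ is not $\theta$-stable. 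Conversely, assume $R$ is unstable. In the proof of Theorem~\ref{except locus} it is shown that $W^S$ is the complement in $\mathbb{A}^2$ of a finite union of lines through the origin, and that this union must be empty: otherwise $C=W^S//\mathbf{k}^*\cong\mathbb{P}^1$ would be $\mathbb{P}^1$ minus finitely many nonzero points, contradicting its properness. Therefore the unstable locus inside $\mathbb{A}^2$ is precisely the origin, whence $r_a=r_b=0$, and the same then holds for the original $R$.

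I do not expect a genuine obstacle: the only step that needs a word of care is the passage between a general $R$ with all $r_{u_i}\neq 0$ and its normal form, for which one checks that the explicit $g_\lambda$ indeed lies in $\PGL$, preserves $\mathbf{V}(e)$ together with the locus $\{r_{u_i}\neq 0\text{ for all }i\}$, and rescales $r_a,r_b$ only by units, so that both the hypothesis and the conclusion of the corollary are invariant along the way. Everything else is a direct quotation of the computations already carried out inside the proof of Theorem~\ref{except locus}.
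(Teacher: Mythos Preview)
Your proposal is correct and follows essentially the same approach as the paper: reduce to normal form via the $\PGL$-action and then quote the identification of the stable locus in $W\cong\mathbb{A}^2$ from the proof of Theorem~\ref{except locus}. You are slightly more explicit than the paper in checking that the normalisation $g_\lambda$ only rescales $r_a,r_b$ by units (so the condition $r_a=r_b=0$ is invariant) and in spelling out the destabilising subrepresentation when $r_a=r_b=0$, but these are details already implicit in the cited proofs rather than a different route.
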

    \begin{proof}
      Given a representation $R$ satisfying the above property, we can find in its orbit $R'$ under the  $\mathrm{PGL(\mathbf{1})}$ action such that $r'_{u_i}=1$ for all $i$. Since $R$ is stable if and only if $R'$ is, we assume $r_{u_i}=1$ for all $i$.\\ 
      In the last paragraph of proof of Theorem \ref{except locus}, we proved that the stable locus of $\mathbf{V}(e,u_1-1,\ldots,u_n-1)=\mathbb{A}^2$ is $\mathbb{A}^2\backslash\{(0,0)\}$ where the point $(0,0)$ corresponds to representation the unique representation in $\mathbf{V}(e,u_1-1,\ldots,u_n-1)$ with $r_a=r_b=0$.  Thus $R$ is unstable if and only if $r_a=r_b=0$.
    \end{proof}
    We now study the tangent space of points on $C$:
    \begin{prop}\label{smooth}
      If $R\in \mathbf{V}(e)$, then $\Ext_A^1(R,R)=\mathbf{k}^2$.
    \end{prop}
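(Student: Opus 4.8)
The plan is to move the computation into $\mathcal{D}^b(\coh S)$. Write $A=\mathbf{k}Q/I$ and let $\Phi\colon\mathcal{D}^b(A)\xrightarrow{\ \sim\ }\mathcal{D}^b(\coh S)$ be the derived equivalence underlying the tautological map, normalized so that $\Phi^{-1}(\cO_x)=T(x)$ for every closed point $x$. Since $\Phi$ is an equivalence it identifies $\Ext^i_A(R,R)$ with $\Ext^i_S(\mathcal{F},\mathcal{F})$, where $\mathcal{F}:=\Phi(R)$. The plan is then to show that $\mathcal{F}\cong\cO_s$ for a closed point $s\in E$, after which one is done immediately: $\Ext^1_A(R,R)=\Ext^1_S(\cO_s,\cO_s)=T_sS$, which is two-dimensional because $S$ is a smooth surface. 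For the bookkeeping it is also convenient to record the Euler relation: $\Ext^i_A(R,R)$ vanishes for $i<0$ (as $R$ is an $A$-module) and for $i>2$ (as $A$ has global dimension $2$), so $\sum_i(-1)^i\dim\Ext^i_A(R,R)=\langle[R],[R]\rangle=\chi_S(\cO_x,\cO_x)=0$ (the class $[R]$ has dimension vector $\mathbf{1}$, hence matches $[\cO_x]$ under $\Phi$); moreover $R$ is $\theta$-stable by our choice of $\theta$ (Lemma~\ref{criterion}, Proposition~\ref{weight}), so $\Hom_A(R,R)=\mathbf{k}$, and $\dim\Ext^1_A(R,R)=\dim T_{[R]}M_\theta\ge 2$ since $M_\theta$ is an irreducible projective surface containing $[R]$ (Corollary~\ref{isom on open}, Theorem~\ref{except locus}). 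Hence the whole assertion is equivalent to the single inequality $\dim\Ext^2_A(R,R)\le 1$.

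The heart of the matter is the identification $\mathcal{F}\cong\cO_s$, and I would carry it out in three steps. First, $\mathcal{F}$ is an honest sheaf, concentrated in degree $0$ (the shift being pinned down by $[\mathcal{F}]=[\cO_x]$): by Corollary~\ref{isom on open} the dense open $M_\theta\setminus C$ is identified via $T$ with $S\setminus E$, so $\mathcal{F}$ is a flat degeneration of the skyscrapers $\cO_{s_t}$ as $s_t$ tends to a point of $E$, and a purity argument—most cleanly, the fact that for a suitable Bridgeland stability condition the stable objects of the numerical class of a point are exactly the structure sheaves of points (compare \cite{To})—shows that such a degeneration is again a sheaf. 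Second, once $\mathcal{F}$ is a sheaf, $\mathrm{ch}(\mathcal{F})=(0,0,1)$ forces its support to be $0$-dimensional (a rank-$0$ coherent sheaf with $c_1=0$ on a surface has no $1$-dimensional component, since a positive combination of nonzero effective curve classes is nonzero) and then of length $\chi(\mathcal{F})=1$, so $\mathcal{F}=\cO_s$. Third, $r_e=0$ says precisely that the section of $\cO_S(E)$ that the arrow $e$ represents vanishes at $s$, i.e.\ $s\in E$.

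The hardest point will be the purity step in the previous paragraph—establishing that $\mathcal{F}$ is a sheaf and not a genuine complex—which one wants to prove without appealing to normality of $M_\theta$, since that is exactly what is being built up to. A self-contained alternative that sidesteps the derived category is purely computational: for a dimension-vector-$\mathbf{1}$ representation $\Ext^\bullet_A(R,R)$ is computed by the three-term complex $0\to\bigoplus_{v}\mathbf{k}\to\bigoplus_{a\in Q_{ar}}\mathbf{k}\xrightarrow{\,d_1\,}\bigoplus_{\rho}\mathbf{k}\to 0$, where $\rho$ runs over the defining relations of $A$; combined with $\dim\Hom_A(R,R)=1$ and the Euler relation above, the claim $\dim\Ext^1_A(R,R)=2$ is equivalent to $d_1$ having corank exactly $1$ at $R$. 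One then checks this case by case along the classification of augmented toric systems behind Theorem~\ref{except locus}, using the explicit normal form of $R\in\mathbf{V}(e)^S$ produced there—exactly the same bookkeeping that establishes Lemma~\ref{key}.
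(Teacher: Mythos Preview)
Your derived-category approach is conceptually appealing but has two gaps that are more serious than you indicate, and your fallback computational approach is essentially what the paper does.

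\medskip

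\textbf{The purity step is genuinely circular.} You want to show $\mathcal{F}=\Phi(R)$ is a skyscraper by realizing it as a flat limit of $\cO_{s_t}$ with $s_t\in S\setminus E$. But the family you have is the universal family of representations over $M_\theta$; after applying $\Phi$ you get a family of \emph{complexes} over $M_\theta$, not over $S$. Knowing that the generic member is $\cO_{s_t}$ does not by itself force the special member to lie in $\coh(S)$: there is no a priori reason the $\theta$-stability on the quiver side matches any Bridgeland stability condition on $S$ whose stable objects of class $[\cO_x]$ are exactly skyscrapers. Invoking \cite{To} does not help, since Toda's construction goes in the opposite direction (starting from a Bridgeland condition and producing a moduli space), and matching it to the specific quiver weight $\theta$ chosen here would amount to re-proving the main theorem by other means.

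\medskip

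\textbf{The lower bound $\dim\Ext^1\ge 2$ is also circular.} You claim $\dim T_{[R]}M_\theta\ge 2$ because $M_\theta$ is ``an irreducible projective surface containing $[R]$''. But at this stage of the argument we only know $M_\theta\setminus C\cong S_0\setminus\{P\}$ and $C\cong\mathbb{P}^1$; nothing yet rules out $C$ being a separate irreducible component of $M_\theta$, in which case the local dimension at $[R]\in C$ would be $1$. The whole point of Proposition~\ref{smooth} in the paper's architecture is precisely to prove that $M_\theta$ is smooth of dimension $2$ along $C$, so you cannot assume this.

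\medskip

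\textbf{Your alternative is the paper's proof.} The paper carries out the direct computation you sketch at the end, in the concrete guise of parametrizing self-extensions. An extension $0\to R\to\mathcal{E}\to R\to 0$ is encoded by upper-triangular matrices $\epsilon_a=\begin{pmatrix} r_a & \lambda_a\\ 0 & r_a\end{pmatrix}$ for each arrow $a$; the unipotent part of the gauge group is used to normalize $\epsilon_{u_i}=I_2$ for all $i$ and $\epsilon_f=r_fI_2$ for one of the two distinguished arrows $f,g$ from Lemma~\ref{key}. One then checks, using exactly the relations tabulated in Lemma~\ref{key}, that every remaining $\lambda_a$ is determined by the two free parameters $\lambda_e$ and $\lambda_g$. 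This is the hands-on version of your ``corank of $d_1$ equals $1$'' statement, and the case analysis is the same one already performed for Lemma~\ref{key}.
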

    \begin{proof}
      The theorem follows from multiple steps of reduction.
      Since the dimension of $\Ext^1$ only depends on equivalence classes of representations, we may assume $R$ to be in the standard form, i.e.,  $r_{u_i}=1$ for all $i$. Let
      \begin{equation*}
          0\to R\to \mathcal{E}\to R\to 0
      \end{equation*}
      be any extension.To give such an extension is equivalent to give for each vertex $i$, an extension
      \begin{equation}
          0\to R_i\xrightarrow{\iota_i} \mathcal{E}_i\xrightarrow{p_i} R_i\to 0
      \end{equation}
      and for each arrow $a$ from $i$ to $j$, a linear transformation $\epsilon_a:\mathcal{E}_i\to \mathcal{E}_j$ such that the following diagram is commutative:
      \[ \begin{tikzcd}
      0 \arrow{r} &R_i\arrow{r}{\iota_i}\ar{d}{r_a}& \mathcal{E}_i\arrow{r}{p_i}\ar{d}{\epsilon_a}&R_i\arrow{r}\ar{d}{r_a} &0 \\%
      0 \arrow{r} &R_j\arrow{r}{\iota_j}& \mathcal{E}_j\arrow{r}{p_j}&R_j\arrow{r} &0 
      \end{tikzcd}
      \]
      Note similar to the action of $GL(\vec{1})$ on representations, the group $GL(\vec{2})=(GL(2))^N$ where $N$ is the number of vertices in $Q$ acts on the set of self-extensions of $R$ in the following way:
      Let $M=(M_0,M_1,\ldots,M_n)\in   GL(\vec{2})$ by 
      $$M\cdot \epsilon_a=M_{t(a)}\epsilon_aM_{s(a)}^{-1}$$
      for any arrow $a$, and for all $i$, 
      \begin{align*}
          M\cdot \iota_i&=M_i\iota_i\\
          M\cdot p_i&=p_iM_i^{-1}
      \end{align*}
      Note all the operation are matrix multiplications on the right hand sides of the above three equations.
      Two self-extensions of $R$, $E$,$E'$ are isomorphic if they are in the same orbit of the $GL(\vec{2})$ action. It is standard that the isomorphism classes of self extensions form a vector space and we need to show it has dimension $2$.\\
      Let $\mathcal{E}$ be such an extension, first we perform a base change, i.e an action of some $M\in GL(\vec{2})$ such that 
      \begin{align}
          &\iota_l(x)=(x,0)\label{reduction 1}\\
          &p_l(x,y)=y\label{reduction 2}
      \end{align}
      for all $l\in\{0,1,\ldots,n\}$
      Using the commutative diagram above, we see now
      \[
      \epsilon_a=
      \begin{bmatrix}
      r_a& \lambda_a\\
      0&r_a
      \end{bmatrix}
      \]
      Hence the orbit of all self-extensions of $R$ under the action of $GL(\vec{2})$ is isomorphic to the action of self-extensions of $R$ satisfying equations (\ref{reduction 1}) and (\ref{reduction 2}) for all $l$, under the action of a subgroup $H\subset GL(\vec{2})$, where the $i$-th component of an element $M$ of $H$ has the form 
      \[
        M_i=
        \begin{bmatrix}
        1& c_i\\
        0&1
      \end{bmatrix}
      \]
      Notice all the matrix in our consideration now are upper-triagular matrices with identical diagonal elements. So the diagonal subgroup
      $$D=(M_1,M_1,\ldots,M_1)\subset H$$
      acts trivially.
      Second,
      for all $i<k$, we have $r_{u_i}=1$, thus 
      \[
        \epsilon_{u_i}=
        \begin{bmatrix}
        1& \lambda_{u_i}\\
        0&1
      \end{bmatrix}
      \]
    For any given
    \[
        M_k'=
        \begin{bmatrix}
        1&c_{k'}\\
        0&1
      \end{bmatrix}
      \]
      for each $i<k$ one can always choose an unique $M_i$ depending on $M_{k'}$ such that
    \begin{align*}
        \epsilon_{u_i}'&=M_{k'} \epsilon_{u_i} M_i^{-1}=
        \begin{bmatrix}
        1&0\\
        0&1
        \end{bmatrix}
    \end{align*}
    For any given
    \[
    M_k=
        \begin{bmatrix}
        1&c_{k}\\
        0&1
      \end{bmatrix}
      \]
    we can do the same thing to $u_j$ with $j>k$. Hence the orbit self-extensions of $R$ under the action of $GL(\vec{2})$ is the isomorphic to the set of extensions satisfying equations (\ref{reduction 1}) and (\ref{reduction 2}) and $\epsilon_{u_i}=I_2$ for all $i$, under the action of $k^2$, where $(a,b)$ acts on the extensions by setting
    \[
    M_k=
        \begin{bmatrix}
        1&a\\
        0&1
      \end{bmatrix} 
       M_{k'}=
        \begin{bmatrix}
        1&b\\
        0&1
      \end{bmatrix}
      \]
      and find $M_i$ for other $i$ as above.
      
    Now let $f,g$ be the two arrows in $Q$ that whose values gives the $\mathbb{A}^2$ in the above proof. Then either $r_f\neq 0$ or $r_g\neq 0$. Without loss of generality, we can let $r_f\neq 0$. Then one can adjust $M_k$ and $M_{k'}$ so that $e_f=I_2$. Noting the fact that the diagonal subgroup acts trivially, the set of isomorphism classes of self-extension of $R$ is isomorphic to the set of self-extensions of $R$ that satisfies equations (\ref{reduction 1}), (\ref{reduction 2}), $\epsilon_{u_i}=I_2$ for all $i$ and $\epsilon_f=r_fI_2$. Now we count the dimension of such extensions. We claim $\lambda_e$ and $\lambda_g$ determines the extension.\\ 
    If an arrow $c$ from $i$ to $j$ satisfies $j<k$. Then either $u_j\circ c=m_cu_i+e\circ (\sum_{l=1}^Ln_la_l)$ for some $m_c\neq 0$, $n_l\in \C$ and $a_l$ arrow from $i$ to $k$, or $u_j\circ c=e\circ (\sum_{l=1}^Ln_la_l)$ is a multiple of $e$. In the first case, we have 
    \begin{align*}
        \epsilon_c&=m_cI_2+ \epsilon_e(\sum_{l=1}^Ln_l\epsilon_{a_l})
        =\begin{bmatrix}
    m_c&\sum_{l=1}^L(n_l{r_{a_l}})\lambda_e\\
    0&m_c
    \end{bmatrix}
    \end{align*}
    Notice $\epsilon_c$ is determined by $\lambda_e$, $R$ and $Q$. In the second case, we have
    \begin{equation*}
        I_2 e_c=\epsilon_e(\sum_{l=1}^Ln_l\epsilon_{a_l})
    \end{equation*}
    Thus 
    \[
    \epsilon_c=
    \begin{bmatrix}
    0&\sum_{l=1}^L(n_l{r_{a_l}})\lambda_e\\
    0&0
    \end{bmatrix}
    \]
    Apply same argument to arrows from $i$ to $j$ with $i>k'$, we see all the linear transformations are determined by $\lambda_e$, $R$ and $Q$ except for arrows of the form i,ii and iii.
    
    Now any indecomposible arrow $h$ of  type i,ii,iii other than $f,g$ satisfies a linear relation involving $f,g$, determining $r_h$. We claim the linear relation also determines the $e_h$. We illustrate this using an example:\\
    Consider the case $\mathcal{TS}^{op}=\{E_1,H-E-E_1,E,H-E,H-E_1\}$, which is the second case of Part 2 in proof of Lemma \ref{key}. There are two arrows of type i, one is from $1$ to $3$, which we call $f_2$, another is from $2$ to $3$, which we call $f_1$.There are two arrows from $3'$ to $4$ of type ii. We denote them by $f_2$, $g_2$. We obtain
       \begin{align*}
        f_2\circ u_1&=u_4\circ f_1\circ e_1\\
        g_2\circ u_1&=u_4\circ g_1
    \end{align*}
    and concluded that the value of $f_2,g_2$ is determined by $f_1,g_1$. Using $f_1,g_1$ as $f,g$ in the previous paragraph and assume without loss of generality $r_f\neq 0$.
    we have
    \begin{equation*}
        \epsilon_{f_2}=r_f\epsilon_{e_1}\\
    \end{equation*}
    \begin{equation*}
        \epsilon_{g_2}=\epsilon_{g_1}\\
    \end{equation*}
    Note $e_1$ is from $1$ to $2<3$, hence the argument in previous paragraph applies. So $\epsilon_h$ is determined by $\lambda_e$ and $\lambda_{g_1}$.
    In conclusion, any isomorphism class of self-extension of $R$ contains a unique normal form, which is determined by $\lambda_e$ and $\lambda_g$. Thus $\Ext_A^1(R,R)=\mathbf{k}^2$.
    \end{proof}
    
    \begin{proof}[\textbf{Proof of Main Theorem and Theorem 1.6}]
      Set theoretically, $$M_\theta=U\cup C$$ where $U\cong S_0\backslash P$ by Corollary \ref{isom on open} and $C\cong\mathbb{P}^1$ by Theorem \ref{except locus}. Since $M_\theta$ is projective, it is connected and of dimension $2$. Again since $U\cong S_0\backslash P$, $M_\theta$ is smooth in the open set $U$. For any point $m\in C$, let $R$ be a representation in this isomorphism class, we notice by Proposition \ref{smooth}
      \begin{align*}
          \dim(T_mM_\theta)=\dim\Ext^1_A(R,R)=2
      \end{align*}
      thus $M_\theta$ is smooth along $C$ also. Thus $M_\theta$ is a smooth, connected projective scheme of dimension $2$, so $M_\theta$ is a surface. Moreover, since $M_\theta$ has a dense open set isomorphic to $S_0\backslash P$ whose complement is isomorphic to $\mathbb{P}^1$, it is the blow up of $S_0$ at $P$ and
      \begin{align*}
          f:M_\theta\to M_{\theta_0}
      \end{align*}
      is the blow up morphism.\ 
      Consider the diagram
      \[ \begin{tikzcd}
      S \arrow[r,dashed,"T"] \arrow{d}{\pi} & M_\theta \arrow{d}{f} \\%
      S_0 \arrow{r}{T_0}& M_{\theta_0}
      \end{tikzcd}
      \]
      and the composition
      \begin{align*}
          T_0\circ\pi:S\to M_{\theta_0}
      \end{align*}
      which is a birational morphism. $(T_0\circ \pi)^{-1}$ is defined except at the point $T_0(P)$. Hence by the universal property of blow up, there exists a birational morphism $T':S\to M_\theta$ so that 
      \[ \begin{tikzcd}
      S \arrow[r,"T'"] \arrow{d}{\pi} & M_\theta \arrow{d}{f} \\%
      S_0 \arrow{r}{T_0}& M_{\theta_0}
      \end{tikzcd}
      \]
      It is clear that $T'|_{S\backslash E}$ coincides with $T|_{S\backslash E}$. Since $S$ is a variety and $S\backslash E$ is a nonempty open subset, the rational map $T$ is defined on all of $S$, hence
      \begin{align}
          T: S\to M_\theta 
      \end{align}
      is a morphism. Now since both $S$ and $M_\theta$ are isomorphic to blow up of $S_0$ at $P$, and $T$ is birational, $T$ is an isomorphism.
    \end{proof}
    
    \begin{proof}[Proof of Corollary \ref{del pezzo}]
    Let $S$ be a smooth del Pezzo surface of degree $d\geq 3$. Suppose $S$ is not $\mathbb{P}^1\times\mathbb{P}^1$, then $S$ is obtained from blowing up $9-d$ points on $\mathbb{P}^2$ in general position (in any order). Thus by Main Theorem, it suffices to find a full strong exceptional collection of line bundles on $S$ obtaiend from standard agumentation from $\mathbb{P}^2$ satisfying (\ref{technical}), there are lots of them. See the Table 1 below for examples: It is easy to check that they are standard augmentations.\\
    
    For $S\cong \mathbb{P}^1\times\mathbb{P}^1$, the result follows from \cite{King}
    \end{proof}

\begin{table}[h]
\caption{Strong exceptional collection of line bundles on Del Pezzo surfaces satisfying (\ref{technical})}
\begin{center}
\begin{tabular}{|c|c|}
		 \hline
		 Degree$=K^2_S$	 &Strong exceptional toric systems
		 $E_{i\ldots k}:=\sum_{j=i}^{k}E_i$\\
		 \hline

		$8$& $H-E_1,E_1,H-E_1,H$ \\
		 
		 \hline	
	    $7$& $H-E_{12},E_2,E_1-E_2,H-E_1, H$ \\
		 
		 \hline
		$6$& $ H-E_{123},E_3,E_2-E_3,E_1-E_2,H-E_1,H$  \\
		
		 \hline	
		 $5$& $H-E_{123},E_3,E_2-E_3,E_1-E_2,H-E_{14},E_4.H-E_4$ \\
		 \hline
		$4$& $ H-E_{123},E_3,E_2-E_3,E_1-E_2,H-E_{145},E_5,E_4-E_5,H-E_4$ \\
		\hline	
		$3$&$E_4, E_2-E_4, H-E_{125}, E_5, E_5-E_1 H-E_{136},E_6,E_3-E_6,H-E_{234}$\\
			\hline	
\end{tabular}
\end{center}
\label{table_csadm}
\end{table}
\subsection*{Acknowledgments}  We would like to thank our advisor Valery Lunts for constant support and inspiring discussions. We wish to thank  Alexey Elagin, Chen Jiang, Junyan Xu, Wai-Kit Yeung, Sailun Zhan for helpful discussions. 
    
\end{document}